\documentclass[11pt]{article}

\usepackage{amsmath,amsfonts,amsthm,amscd,amssymb,graphicx}

\usepackage{mathabx}

\usepackage[utf8]{inputenc}

\usepackage{subfigure}
\usepackage{xcolor}

\numberwithin{equation}{section}

\usepackage{enumitem}
\usepackage{hyperref}
\usepackage{thmtools}
\usepackage{cleveref}

\usepackage{cases}

\newtheorem{theorem}{Theorem}[section]

\newtheorem{lemma}[theorem]{Lemma}

\newtheorem{proposition}[theorem]{Proposition}

\newtheorem{remark}[theorem]{Remark}

\newcommand{\RR}{\mathbb{R}}

\def\cX{\mathcal{X}}
\def\FcX{\widehat{\cX}}
\def\FcP{\widehat{\cP}}

\newcommand{\cH}{\mathcal{H}}

\newcommand{\FF}{{\widehat F}}

\def\cI{\mathcal{I}}
\def\cP{\mathcal{P}}
\def\cL{\mathcal{L}}
\def\hw{\widehat{w}}
\def\hgamma{\widehat{\gamma}}

\def\cN{\mathcal{N}}

\def\FK{\widehat{K}}

\def\Frho{\widehat{\rho}}

\def\FS{\widehat{S}}

\def\FG{\widehat{G}}

\def\FA{\widehat{A}}
\def\FB{\widehat{B}}

\def\FI{\widehat{I}}

\def\cJ{\mathcal{J}}

\def\FA{\widehat{A}}

\def\Frho{\widehat{\rho}}

\def\Fmu{\widehat{\mu}}

\def\cD{\mathcal{D}}
\def\tlambda{\tilde{\lambda}}
\def\ttau{\tilde{\tau}}

\def\gammaeq{\gamma_f}
\def\Fgammaeq{\widehat{\gamma}_f}
\def\Fgamma{\widehat{\gamma}}

\makeatletter
\newcommand{\labitem}[2]{%
	\def\@itemlabel{\textbf{#1}}
	\item
	\def\@currentlabel{#1}\label{#2}}
\makeatother

\begin{document}

\title{Asymptotic Stability of Hartree--Fock Homogenous Equilibria in $\mathbb{R}^d$} 

\author{Toan T. Nguyen\footnotemark[1]
	\and Chanjin You\footnotemark[1]
}

\maketitle

\footnotetext[1]{Penn State University, Department of Mathematics, State College, PA 16802. Emails: nguyen@math.psu.edu, cby5175@psu.edu. The research is supported in part by the NSF under grant DMS-2349981.}

\maketitle

\begin{abstract}

In this paper, we establish nonlinear Landau damping and asymptotic stability of a large class of translation-invariant steady solutions to the time-dependent Hartree--Fock equations in the presence of an {\em off-diagonal exchange operator}, which arises naturally in the meanfield theory of a large fermionic system, in the whole space $\mathbb{R}^d$, $d\ge 3$. Despite being a sub-order operator, the inclusion of the exchange term disturbs the classical Schr\"odinger dispersion and causes a complex linear response from the background electrons to the space density whose dispersion relation is no longer a Fourier multiplier as in the classical Vlasov and Hartree theory. In addition, the group velocity of each elementary waves involves a mixture of all other Fourier modes, leading to delicate {\em momentum-dependent echo resonances}. To overcome the issues, we develop a nonlinear iterative scheme that relies on a detailed resolvent analysis, makes use of a transport type dispersion in Fourier spaces, and propagates phase mixing and Landau damping in weighted $L^\infty_{k,p}$ norms.

\end{abstract}

\tableofcontents

\section{Introduction}

The Hartree--Fock theory is widely used in plasma physics and quantum mechanics to describe the effective dynamics of many weakly interacting electrons and fermions in the meanfield regime. In this work, we are interested in the relaxation and asymptotic behavior of solutions to such a theory near homogenous equilibria. Specifically, we consider the following time-dependent Hartree--Fock equation 
\begin{equation}
	\label{HFeqs}
	\begin{cases}
		\begin{aligned}
			&i \partial_{t} \Gamma = [ - \Delta + \cP_\Gamma, \, \Gamma ], \\
			&\Gamma_{\vert_{t=0}} = \Gamma_{0},
		\end{aligned}
	\end{cases}
\end{equation}
that describes the dynamics of a one-particle density matrix $\Gamma$, which is a self-adjoint operator on $L^2(\mathbb{R}^d)$, $d\ge 3$, satisfying the Pauli exclusion constraint $0 \le \Gamma \le 1$. Here in \eqref{HFeqs}, $\cP_\Gamma$ is the self-consistent operator defined by 
\begin{equation}\label{defP}
\cP_\Gamma: = w_1 \star_x \rho_{\Gamma} - \mathcal{X}_{w_2,\Gamma}
\end{equation}
which accounts for the classical {\em meanfield interaction} effect through the spatial convolution $w_1 \star_x \rho_{\Gamma}$ and for the quantum effect through the {\em exchanging operator} $\mathcal{X}_{w_2,\Gamma}$, where $\rho_\Gamma$ denotes the configuration space density, and $w_1(x), w_2(x)$ are two interaction potentials. The convolution $w_1 \star_x \rho_{\Gamma}$ is a space function, and thus understood as a multiplication operator. When $\Gamma$ has an integral kernel $\Gamma(x,y)$, the  
the space density and the exchange kernel are computed by  
\begin{equation}\label{defX} 
\rho_\Gamma(x) = \Gamma(x,x), \qquad \mathcal{X}_{w_2,\Gamma}(x,y) = w_2(x-y) \Gamma(x,y).
\end{equation}
Note that both the meanfield effect and the exchange operator are self-consistent, and therefore, the coupling in \eqref{HFeqs} is nonlinear with the usual bracket notation $[A,B]=AB-BA$ for commutators. In practice, the exchange operator is often sub-order with $|w_2|\ll1$, e.g., in the semiclassical regime. In the absence of the exchange operator $\mathcal{X}_{w_2,\Gamma}\equiv 0$, the equation \eqref{HFeqs} is often referred to as the Hartree equation (of infinitely many interacting fermions).
 
 The Cauchy problem for the Hartree and Hartree--Fock equation \eqref{HFeqs} is rather classical, going back to the works \cite{Bove1974, Bove1976, Chadam1976, Chadam1975, Zagatti1992} in the 70s. The stability and scattering theory near vacuum for the Hartree--Fock equations were also established in \cite{Wada2002,Ikeda2012,Maleze2025}, while the modified scattering theory for the Hartree equations in the critical Coulomb case was obtained recently in \cite{NguyenYou2024}. In the Kohn--Sham density functional theory, the non-local exchange operator is replaced by the local space density, and the corresponding Cauchy problem as well as small data scattering theory in almost all subcritical regimes were established in  \cite{Jerome2015,Sprengel2017} and in \cite{Pusateri2021}, respectively. 
 
This paper concerns with the asymptotic stability and scattering theory near homogenous steady states. The Hartree--Fock equations \eqref{HFeqs} indeed admit a large  class of translation-invariant equilibria of the form 
\begin{equation}\label{equilibria}
\gammaeq = f(-\Delta)
\end{equation} 
for arbitary functions $0 \le f \le 1$ with finite space density $\rho_{\gammaeq}= (2\pi)^{-d}\int_{\mathbb{R}^d} f(|k|^2) \, dk$. Apparently, this includes several physically important examples such as Fermi gases and Gibbs states at positive temperature. 

In their seminal works \cite{Lewin2014, Lewin2015}, Lewin and Sabin initiated the well-posedness and scattering theory of the nonlinear Hartree equation around homogenous equilibria of the form \eqref{equilibria}. In particular, they established the asymptotic stability of such equilibria in the case of screened or short-range potentials in $\RR^2$. This has been generalized to the three-dimensional case in \cite{Chen2018}. Further generalizations that allow less regular equilibria and interaction potentials can be found in \cite{Chen2017,Hadama2025b, Hadama2024, Collot2020, Collot2022, You2024}, including the semiclassical regime \cite{Lewin2020, Smith2024, Hadama2025}. In all these works, the interaction potential is assumed to be screened or short range: namely, $w_1 \in L^1$, or more generally, $w_1$ is a Borel measure with $\hw_1 \in L^{\infty}$, which turns out to be necessary for the invertibility of the corresponding linearized problem. For long-range interaction potentials (e.g. the classical Coulomb potential), this invertibility always fails for any non-trivial equilibrium due to resonances or poles of the linearized spacetime symbol, leading to 
the appearance of {\em plasmons} or quantized plasma oscillations \cite{NguyenYou2025}. 

Much fewer works were found for the Hartree--Fock equation \eqref{HFeqs} near equilibria in the presence of the exchange operator, most probably due to the common folklore that the exchange term may often be negligible (e.g. in the semiclassical regime). Interestingly, however, the inclusion of the exchange term, despite being sufficiently small, leads to several complications that attract our attention, which we shall highlight in Section \ref{sec-issues} below. To the best of our knowledge, the first available work that studies the Hartree--Fock equations near equilibria in the presence of a small exchange term was carried out in \cite{Collot2025}, establishing the scattering of homogenous equilibria using random field formulation in higher dimensions $d \ge 4$. There appear no available works that treat the three-dimensional case, which we shall provide in this paper. In particular, we establish nonlinear Landau damping and scattering theory for the Hartree--Fock equations near a large class of homogenous equilibria. 

\subsection{Equilibria and potentials}\label{sec-Assmp}

Specifically, let $w_1,w_2$ be interaction potentials, and consider {\em positive} homogenous equilibria of the form $\gammaeq=f(-\Delta)$ in the whole space $\RR^d$ with $d\ge 3$. Throughout the paper, we assume the followings. 

\begin{enumerate}
	\labitem{\textbf{(H1)}}{equi:nonneg} {$0 < f \le 1$} with finite space density $\rho_{\gammaeq}= (2\pi)^{-d}\int_{\mathbb{R}^d} f(|k|^2) \, dk$. 
	
	\labitem{\textbf{(H2)}}{equi:reg} $f(\cdot)$ belongs to $C^{n_0}(\RR_+)$ for some $n_0 > d+3$, satisfying  
	$$| \partial_e^{n} f(e)| \lesssim \langle e \rangle^{-n_1 - n}$$ 
	for all $e \ge 0$ and $0 \le n \le n_0$, for some $n_1 > d$. 
	
	\labitem{\textbf{(H3)}}{assumption:potential} The potentials $w_j$, $j=1,2$, are radial Borel measures on $\mathbb{R}^d$ such that $\hw_j$ are continuous, nonnegative, and bounded functions.
	
	\labitem{\textbf{(H4)}}{assumption:exchange} $\hw_2 \in W^{2n_0,\infty}$ with a sufficiently small norm $\epsilon_1: = \|\hw_2\|_{W^{2n_0,\infty}}>0$. 

\end{enumerate}

Assumptions {\ref{equi:nonneg}-\ref{equi:reg}} are standard assumptions that are satisfied by a large class of equilibria where the regularity and decay assumptions are linked to the regularity and mixing rates of the Green function to the linearized problem. 
In addition, we consider precisely the positive equilibria and short-range potentials (i.e. $\hw_1(0)<\infty$), for otherwise the linear stability may not hold, among other possible issues, see, e.g., \cite{Lewin2014, Lewin2015, HanKwan2021, HanKwan2021a,HKNR5, Nguyen2026, NguyenYou2025, You2024}. Finally, we do not make any smallness assumption on the equilibria, but the smallness on the exchange potential $w_2$. As will be clear below, see Section \ref{sec-issues}, the smallness plays a crucial role in retaining the Schr\"odinger type dispersion and ensuring the linear stability.

\subsection{Main results}

Our main results concern the large time behavior of solutions to the Hartree--Fock equations \eqref{HFeqs} near equilibria $\gamma_f$ under small initial perturbations. Precisely, we establish the following main theorem.  

\begin{theorem}\label{thm:main} 
Fix an $n_0 >d+3$, $n_1>d$, $N_0 \ge n_0 -3$, and $\sigma_0 >2d + 1$. Let $w_1, w_2$ be the interaction potentials and $\gammaeq=f(-\Delta)$ be an equilibrium of \eqref{HFeqs} as described in Section \ref{sec-Assmp}. Let $\Gamma_0 = \gammaeq+\gamma_0$ be the initial density operator satisfying
	\begin{equation}\label{assm:initial}
		\epsilon:=\sum_{|\alpha| \le N_0}  \left\| \langle x \rangle^{\lfloor d/2 \rfloor +1} \langle \nabla \rangle^{\sigma_0} \operatorname{ad}_x^{\alpha}(\gamma_0) \langle \nabla \rangle^{\sigma_0} \langle x \rangle^{\lfloor d/2 \rfloor +1}  \right\|_{\textnormal{HS}}
	\end{equation}
	where $\operatorname{ad}_x(A) = [x,A]$ and $\|A\|_{\textnormal{HS}} = \|A\|_{L^2_{x,y}}$.
	Then, there exists an $\epsilon_0 >0$ so that, provided $\epsilon \in [0,\epsilon_0]$, there is a global-in-time solution $\Gamma(t)$ to \eqref{HFeqs}, and its associated space density $\rho_{\Gamma}(t)$ satisfies
	\[
	\| \partial_x^{n} (\rho_{\Gamma}-\rho_{\gamma_f})(t) \|_{L^{p}} \lesssim \epsilon_0 \langle t \rangle^{-d(1-1/p)-n+\delta}
	\]
for $2\le p\le \infty$, $0\le n < \min\{n_0-3, \sigma_0-1\}-d$, and for small $\delta >0$. In addition, there is a unique operator $\gamma_{\infty} \in \mathrm{HS}$ satisfying
	\[
		\| \Gamma(t) -\gammaeq -e^{itH_\infty}  \gamma_{\infty} e^{-itH_\infty} \|_{\textnormal{HS}} \lesssim \epsilon_0 \langle t \rangle^{-d/2+\delta}
	\]
with the limiting Hamiltonian $H_\infty= -\Delta - \mathcal{X}_{w_2, \gammaeq}
$.  
\end{theorem}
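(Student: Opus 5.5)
We would prove Theorem \ref{thm:main} by passing to the Wigner/Fourier representation of the perturbation $\gamma=\Gamma-\gammaeq$ and setting up a nonlinear iteration on the pair $(\rho_\gamma,\gamma)$. Write the kernel of $\gamma$ in the variables $(k,p)$, where $k$ is the Fourier variable dual to $x$ (the spatial frequency of the density) and $p$ is the momentum. The perturbation then solves
\[
i\partial_t\gamma=[-\Delta-\mathcal{X}_{w_2,\gammaeq},\gamma]+[w_1\star\rho_\gamma-\mathcal{X}_{w_2,\gamma},\gammaeq]+[\cP_\gamma,\gamma].
\]
The free part $H_\infty=-\Delta-\mathcal{X}_{w_2,\gammaeq}$ is a Fourier multiplier with symbol $e(p)=|p|^2-(2\pi)^{-d}(\hw_2\star f(|\cdot|^2))(p)$. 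By \ref{assumption:exchange}, $e$ is a small $W^{2n_0,\infty}$ perturbation of $|p|^2$. Its flow therefore acts in Fourier by the phase $e^{-it(e(p+k/2)-e(p-k/2))}$. Since $\nabla e$ is close to $2p$, this phase has a transport-type structure $e^{-itk\cdot v(k,p)}$ with $\nabla_p$ of the phase nondegenerate. This yields phase mixing: $\rho$ of a freely evolved operator decays like $\langle kt\rangle^{-M}$ in Fourier, and integrating by parts in $p$ using the $\operatorname{ad}_x$ regularity gives decay $\langle t\rangle^{-d(1-1/p)-n}$ in physical space.

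\textbf{Linear step.} Apply Duhamel and take the density. This gives a closed equation for $\rho_\gamma$, but the exchange term $\mathcal{X}_{w_2,\gamma}$ does not close on $\rho$ alone. We would therefore treat the linearized operator $\gamma\mapsto[w_1\star\rho_\gamma-\mathcal{X}_{w_2,\gamma},\gammaeq]$ as a Volterra equation in the full variables $(k,p)$ at fixed $k$. The $w_1$ part produces the classical Lindhard-type response kernel $\FK(t,k)$; the exchange part produces an integral operator in $p$ whose kernel is $O(\epsilon_1)$ in a weighted $L^\infty_p$ norm.

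The resolvent analysis proceeds by Laplace transform in $t$. Positivity of $f$, radial monotonic structure, and $\hw_1\ge0$ give the Penrose-type nonvanishing $1+\hw_1(k)\widehat{K}(\lambda,k)\neq0$ for $\Re\lambda\ge0$, as in Lewin--Sabin. The exchange contribution is then inverted by a Neumann series, using smallness $\epsilon_1$. The outcome is a Green function $G(t,k,p,p')$ that decays like $\langle kt\rangle^{-n_0+3}$ uniformly in weighted $L^\infty_{k,p}$, with derivatives controlled by the $C^{n_0}$ regularity of $f$ in \ref{equi:reg}.

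\textbf{Nonlinear step.} We bootstrap the norm
\[
\sup_t \langle t\rangle^{\,d+n-\delta}\,\|\langle k\rangle^{\sigma}\langle kt\rangle^{n}\widehat\rho(t,k)\|_{L^\infty_k}
\]
together with a weighted $L^\infty_{k,p}$ norm on the profile $e^{itH_\infty}\gamma e^{-itH_\infty}$, with derivatives in $p$ growing at most polynomially. The quadratic terms $[w_1\star\rho,\gamma]$ and $[\mathcal{X}_{w_2,\gamma},\gamma]$ are estimated using the decay of $\rho$, and for the exchange term using the decay of the off-diagonal kernel in $x-y$ induced by $\hw_2$ smoothness.

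The main obstacle is the echo/plasma-echo resonance. In the term $\int_0^t\widehat\rho(s,l)\,\widehat\gamma(s,k-l,p\pm l/2)$, the phase $e^{-i(t-s)(\dots)}$ is now momentum dependent, because $v(k,p)$ mixes all modes. We would handle it as in the Vlasov/Hartree case: split $s\le t/2$ and $s\ge t/2$, exploit the $\langle ks\rangle^{-n}$ decay of $\rho$, and accept a loss of $\langle t\rangle^{\delta}$. This is possible since $d\ge3$ gives integrable $\langle s\rangle^{-d}$ decay, so no finite-regularity Gevrey framework is needed; the extra $\epsilon_1$ smallness absorbs commutators from the non-quadratic part of $e(p)$.

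\textbf{Scattering.} We set $\gamma_\infty=\lim_{t\to\infty}e^{-itH_\infty}\gamma(t)e^{itH_\infty}$. The limit exists in HS because the Duhamel integrand is bounded by $\|\rho(s)\|_{L^2\cap L^\infty}$-type quantities, which decay like $\langle s\rangle^{-d/2-1+\delta}$. The tail integral then gives the rate $\langle t\rangle^{-d/2+\delta}$.
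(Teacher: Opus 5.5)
Your overall architecture (profile $e^{-itH_\infty}\gamma e^{itH_\infty}$ in Fourier variables, Penrose--Lindhard inversion, bootstrap in weighted $L^\infty_{k,p}$ norms, $L^2_{k,p}$ scattering of the profile) is the paper's. Inverting the linear exchange response by a Neumann series in $\epsilon_1$ is a workable alternative to the paper's choice, which puts $a_{k-p,p}\widehat{\mathcal X}_\gamma$ into the source and closes with a $C_1\epsilon_1\zeta_\rho$ term. The nonlinear step, however, has genuine gaps.

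First, your bootstrap quantity $\langle t\rangle^{d+n-\delta}\|\langle k\rangle^{\sigma}\langle kt\rangle^{n}\widehat\rho(t,k)\|_{L^\infty_k}$ cannot stay bounded. At $k=0$ one has $A_{-p,p}=a_{-p,p}=0$, and the quadratic term integrates to the trace of a commutator, so $\widehat\rho(t,0)=\operatorname{tr}\gamma_0$ is conserved. The Fourier-side norm may only grow like $\langle t\rangle^{\delta}$, with all decay carried by the weight $\langle kt\rangle^{N}$; the physical-space rates then come from Hausdorff--Young.

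Second, the spatial localization of $w_2(x-y)$ gives no time decay for $\mathcal{X}_{w_2,\gamma}$. That decay is needed both for the profile bounds and in the density estimate, where $\widehat{\mathcal P}_\gamma(s,k-p,\ell-k+p)$ contains $\widehat{\mathcal X}_\gamma$ and must supply $\langle \ell s\rangle^{-N}$. The missing observation is that $\widehat{\mathcal X}_\gamma(t,k-p,p)=\int \widehat{w}_2(\ell-p)\widehat\gamma(t,k-\ell,\ell)\,d\ell$ has exactly the structure of $\widehat\rho(t,k)$, with $p$-derivatives falling only on $\widehat w_2$. So it phase-mixes like the density and must be bootstrapped alongside it, with weight $\epsilon_1^{-1}\langle t\rangle^{-\delta}\langle k\rangle^{\sigma}\langle kt\rangle^{N}$ on $\partial_p^\alpha\widehat{\mathcal X}_\gamma(t,k-p,p)$. (Also, in the scattering step the extra $\langle s\rangle^{-1}$ comes from the factor $|k|$ in $a_{k-p,p}$, not from $\|\rho\|_{L^2}$, which decays only like $\langle s\rangle^{-d/2+\delta}$.)

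Third, and most seriously, the echo step as you describe it fails. With $\Phi=tA_{k-p,p}-sA_{k-p,\ell-k+p}$, near the resonant set $\partial_p\Phi$ can be $O(1)$ while $\partial_p^2\Phi$ grows linearly in time: $t\,\partial_p^2A_{k-p,p}$ is generically of size $\epsilon_1 t\min\{|k|,1\}$. So repeated integration by parts in $p$ is not available, and $\epsilon_1$ cannot absorb this because it comes multiplied by $t$ and $s$. Splitting $s\le t/2$ versus $s\ge t/2$ does not help. The density in the quadratic term sits at frequency $\ell$, not $k$. Where $|\ell s|\sim|kt|$, the whole factor $\langle\ell s\rangle^{-N}$ is used up producing $\langle kt\rangle^{-N}$, leaving nothing to generate the $\langle s\rangle^{-d}$ you invoke, so the $s$-integral loses a full power of $t$.

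The paper instead localizes with $\chi\big(4\partial_p\Phi/(\theta_0|kt|)\big)$.
\begin{itemize}
\item Where $|\partial_p\Phi|\gtrsim|kt|$, one gets $|\partial_p\Phi|\gtrsim|kt|+|\ell s|\gtrsim|\partial_p^\alpha\Phi|$, so $N$ integrations by parts are legitimate. The resulting $\langle s\rangle^{1+\delta}$ growth of the top $p$-derivatives of the profile (whence the hierarchy $1,\langle t\rangle^{\delta},\langle t\rangle^{1+\delta}$ in the profile norms) is paid for by $\int\langle\ell s\rangle^{-N}\cdots d\ell\lesssim\langle s\rangle^{-d+2\delta}$ with $d\ge3$.
\item Where $|\partial_p\Phi|\le\theta_0|kt|/2$, one has $|\ell s|\gtrsim|kt|$, so the density gives $\langle kt\rangle^{-N}$. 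A single integration by parts in $p$ uses only one derivative of the profile; the bound $|\partial_p^2\Phi|\lesssim|\ell s|\langle k\rangle^{-1}$ is compensated by the extra $\langle k-\ell\rangle^{-1}$ from the lower-order profile norm. This leaves $\langle\partial_p\Phi\rangle^{-1}+s\langle\partial_p\Phi\rangle^{-2}$.
\item The change of variables $\ell\mapsto\eta=\partial_pA_{k-p,\ell-k+p}$ (Jacobian $\approx2$), together with $\langle\partial_p\Phi\rangle\ge s|\eta-\tfrac{t}{s}\partial_pA_{k-p,p}|$ and local integrability of $|\eta|^{-2}$ in $d\ge3$, then yields the $\langle s\rangle^{-1}$ that costs only $\langle t\rangle^{\delta}$.
\end{itemize}
This decomposition according to the size of $\partial_p\Phi$, rather than of $s$, is the idea your proposal lacks.
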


Theorem \ref{thm:main} establishes the asymptotic stability and large time behavior of solutions near a large class of homogenous equilibria to the Hartree--Fock equations, which appears to be the first such a result in the physical space dimension $d= 3$. In addition, the configuration space density disperses in space with rate of order $t^{-d+\delta}$ in addition to {\em phase mixing} estimates for spatial derivatives. As a result, we also establish the scattering theory of the Hartree--Fock solutions which scatter to solutions of a linear dynamics $e^{itH_\infty}  \gamma_{\infty} e^{-itH_\infty}$ in the large time limit. We note that the required localization and regularity assumptions on the initial perturbations may not be optimal. We also remark that although the theorem is stated under the stronger assumption \eqref{assm:initial}, the proof only uses the weaker norm
\[
	\sum_{|\alpha| \le N_0} \left\| \langle k \rangle^{\sigma_0} \langle p \rangle^{\sigma_0} (\partial_k - \partial_p)^{\alpha} \widehat{\gamma}_0 \right\|_{L^{\infty}_{k,p}} \lesssim \epsilon 
\]
on the Fourier side for $N_0>d$ and $\sigma_0>2d+1$.

\subsection{Difficulties and main ideas}\label{sec-issues}

We shall now discuss the difficulties and main ideas to overcome them in establishing the main results stated in Theorem \ref{thm:main}. The very first issue is to analyze the dispersive nature of the expected limiting dynamics $e^{itH_\infty}  \gamma_{\infty} e^{-itH_\infty}$, with Hamiltonian $H_\infty = -\Delta - \mathcal{X}_{w_2, \gammaeq}$, whose dispersion relation reads 
\begin{equation}\label{w}\omega(k) = |k|^2 - \FcX_{w_2, \gammaeq}(k),
\end{equation}
for a sufficiently smooth Fourier multiplier $\FcX_{w_2, \gammaeq}(k)$, see \eqref{def:omega}. For a small potential $w_2$, $\omega(k)$ clearly behaves like a standard Schr\"odinger dispersion. However, the main relaxation mechanism of the Hartree--Fock space density is the phase mixing dictated by the commutator $[H_\infty, \gamma]$, which is of a transport type dynamics (e.g., through the Wigner transform), and a small perturbation of the dispersion relation \eqref{w} could lead to substantial obstructions in utilizing phase mixing. 
Specifically, the space density $\rho_\gamma$ of the Hartree--Fock dynamics $e^{itH_\infty}  \gamma_0 e^{-itH_\infty}$ can be computed through its Fourier transform   
\begin{equation}\label{Fr0}
\Frho_\gamma(t,k) = \int e^{-itA_{k-p,p}} \Fgamma_0(k-p,p) \; dp
\end{equation}
with the phase function $A_{k,p} = \omega(k) - \omega(p)$ (i.e. the Fourier symbol of the commutator $[H_\infty, \gamma]$). The relaxation of the space density is thus due to momentum averaging and time oscillation $e^{-itA_{k-p,p}} $, a classical relaxation mechanism, known as {\em phase mixing}. As $\omega(k)\sim |k|^2$, the phase $A_{k-p,p}$ remains of a transport type dispersion, namely $A_{k-p,p} \sim k\cdot p$, leading to a rapid decay of the space density in $|kt|$. However, several issues arise.  

First, we will need to study the linear response from the background electrons $\gammaeq = f(-\Delta)$, namely the commutator term $[\cP_\gamma, \gammaeq]$. This involves the classical meanfield effect and the quantum exchange term. For the meanfield effect, we do not assume any smallness on the interaction potential $w_1(x)$. As a result, we will need to invert the classical Penrose dielectric function $D(\lambda,k)$, see \eqref{def:disp}. The fact that the transport dispersion $A_{k-p,p}$ is no longer linear in $p$ requires careful modifications from the earlier developed Fourier--Laplace framework, see \cite{NguyenYou2025, You2024}. For quantum exchange effect, the linear response now involves  the full range of Fourier frequencies, and can no longer be solved mode by mode independently as was the case in the classical Vlasov and Hartree theory, see, e.g., \cite{Mouhot2011,  Grenier2021, Bedrossian2022, HanKwan2021a,Ionescu2023, Nguyen2026, Lewin2014, Lewin2015, NguyenYou2025}. 

Second, the modification in the transport dispersion $A_{k-p,p} \sim k\cdot p$ leads to delicate quantum echo resonances. Indeed, the nonlinear interaction $[\cP_\gamma, \gamma]$ involves particles traveling forward $e^{-itA_{k-p,p}}$ with group velocity $\partial_pA_{k-p,p}$ and those traveling backward $e^{isA_{p,\ell-p}}$ with group velocity $\partial_pA_{p,\ell-p}$, 
leading to resonances at $t\partial_pA_{k-p,p} \sim s\partial_pA_{p,\ell-p}$, resembling the classical plasma echoes at $kt \sim \ell s$. Due to the presence of the exchange term, the resonant set depends non-trivially on $p\in \RR^d$, and in fact has $p$-derivatives grow linearly in $t$, which in turn leads to a derivative loss (cf. the $p$-independent resonant condition $kt \sim \ell s$ in the classical case). To overcome this issue, we develop a nonlinear iterative scheme that utilizes the transport type dispersion in Fourier spaces and propagates phase mixing in weighted $L^\infty_{k,p}$ norms. The treatment of such complex momentum-dependent echo resonances is one of the novelties in this work. For a related nonlinear scheme in Fourier spaces, see \cite{Bedrossian2018, Nguyen2020, You2024, Smith2024} for the classical Vlasov and Hartree theory where echo resonances are momentum-independent (i.e. $kt\sim \ell s$).   

Finally, we remark that the aforementioned issues are also the central obstructions in the classical Vlasov and Hartree theory near {\em spatially inhomogenous} equilibria, which are far from being understood. For a study of the linearized dynamics, see \cite{Guo, Despres, Hadzic}.

\subsection{Notation}

Throughout the paper, we shall deal with functions of the form $A(k-p,p)$. When no confusion is possible, we write $\partial_p A(k-p,p) = \partial_p [A(k-p,p)].$ We also use repeatedly the inequality 
\begin{equation}\label{minab}
\langle a\rangle^{-\sigma} \langle b\rangle^{-\sigma} \le 2^\sigma \langle a-b\rangle^{-\sigma} \min \{ \langle a \rangle, \langle b\rangle\}^{-\sigma},
\end{equation}
for any vectors $a,b$, and for $\sigma>0$, which is direct by considering two cases: $|a|\ge |a-b|/2$ or $|b|\ge |a-b|/2$.
We use the notation $\widehat{~\cdot~}$ and $\widetilde{~\cdot~}$ to denote the Fourier transform in $\RR^d_x$ and the Laplace--Fourier transform in $\RR_+ \times \RR^d_x$, namely   
\[
\widehat{f}(k) = \int_{\mathbb{R}^{d}} e^{-ix \cdot k} f(x) \, dx, 
\qquad	\widetilde{f}(\lambda,k) = \int_{0}^{\infty} e^{-\lambda t} \widehat{f}(t,k) \, dt ,
\]
for $\Re \lambda \ge 0$ and $k \in \RR^d$. For integral kernels $K(x,y)$, we denote by $\FK(k,p)$ their Fourier transform in variables $x,y\in \RR^d$ with dual variables $k,p$ in $\RR^d$, respectively.

\section{Preliminaries}

\subsection{Perturbations}

Throughout the paper, we shall work with Hartree--Fock solutions in a perturbative form 
\begin{equation}\label{pertGamma}
\Gamma =  \gammaeq + \gamma
\end{equation}
near equilibria $\gammaeq = f(-\Delta)$. Using \eqref{HFeqs}, this leads to the following perturbed Hartree--Fock equation
\begin{equation}\label{eqn:perturb}
	i\partial_t \gamma = [H_\infty, \, \gamma] + [\cP_\gamma, \, \gammaeq] + [\cP_\gamma, \, \gamma],
\end{equation}
in which we recall that $H_\infty = -\Delta - \mathcal{X}_{w_2, \gammaeq}$ and $\cP_\gamma = w_1 \star_x \rho_{\gamma} - \mathcal{X}_{w_2, \gamma}$, where space density $\rho_\gamma$ and exchange operator $\cX_{w_2, \gamma}$ are defined as in \eqref{defX}. In particular, we note that $ w_1 \star_x \rho_{\gamma} $ is a constant at the equilibrium $\gamma_f$, and therefore does not contribute to the limiting Hamiltonian $H_\infty$. We also observe that the first two terms on the right hand side of \eqref{eqn:perturb} are linear in $\gamma$, while the last term is nonlinear, since $\cP_\gamma$ is self-consistently generated by $\gamma$. 

We shall solve the Hartree--Fock equation \eqref{eqn:perturb} with 
small initial data $\gamma_{\vert_{t=0}} = \gamma_0$ as described in Theorem \ref{thm:main}. In particular, we will only work with Hilbert--Schmidt operators $\gamma(t)$ with integral kernels $\gamma(t,x,y)$. We note that the commutator $[A,B]$ is computed through 
\begin{equation}\label{commAB}
[A,B](x,y) = \int \Big( A(x,z)B(z,y) - B(x,z)A(z,y)\Big) \; dz,
\end{equation}
where $A(x,y)$ and $B(x,y)$ are integral kernels of $A, B$, respectively.

\subsection{Fourier transform}

We will work in Fourier spaces. Denote by $\Fgamma(t,k,p)$ the Fourier transform of the integral kernel $\gamma(t,x,y)$ in variables $x,y$ with dual variables $k,p$ in $\RR^d$, respectively. We obtain the following simple lemma. 

\begin{lemma}\label{lem-FHF} Let $g(k) = f(|k|^2)$ be the equilibrium, and introduce $a_{k,p}=g(k) - g(p)$ and $A_{k,p} = \omega(k) -\omega(p)$, with the modified Schr\"odinger dispersion relation 
\begin{equation}\label{def:omega}
	\omega(k): = |k|^2 - \frac{1}{(2\pi)^d} (\widehat{w}_2 \star g)(k) +  \frac{1}{(2\pi)^d} (\widehat{w}_2 \star g)(0).
\end{equation}
Then, for any Hilbert--Schmidt operator $\gamma(t)$ solving 
\eqref{eqn:perturb}, its Fourier transform $\Fgamma(t)$ satisfies
\begin{equation}\label{eqn:fourier}
	\begin{aligned}
		i\partial_t \hgamma(t,k,p) 
		&= A_{k,p} \hgamma(t,k,p) - a_{k,p}\FcP_\gamma(t,k,p)\\
		&\quad 
		+ \frac{1}{(2\pi)^d} \int \left(\FcP_{\gamma}(t,k,\ell) \widehat{\gamma}(t,-\ell, p) -  \widehat{\gamma}(t,k,\ell) \FcP_{\gamma}(t,-\ell, p) \right)  d\ell,
	\end{aligned}
\end{equation}
for any $k,p\in \RR^d$, where $\FcP_\gamma(t,k,p)$ is computed by   
\begin{equation}\label{comFcP}
\FcP_\gamma(t,k,p) = \hw(k+p) \widehat{\rho}_{\gamma}(t,k+p) - \widehat{\mathcal{X}}_{\gamma}(t,k,p).
\end{equation}
\end{lemma}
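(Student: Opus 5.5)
The plan is to take the Fourier transform in $(x,y)$ of each term in \eqref{eqn:perturb}, using the kernel formula \eqref{commAB} for commutators. The transform of the product kernel $\int A(x,z)B(z,y)\,dz$ is computed directly: with $\widehat{K}(k,p)=\iint e^{-ix\cdot k - iy\cdot p}K(x,y)\,dx\,dy$, Plancherel in the $z$ variable gives
\[
\widehat{AB}(k,p) = \frac{1}{(2\pi)^d}\int \widehat A(k,\ell)\,\widehat B(-\ell,p)\,d\ell .
\]
Applied to $[\cP_\gamma,\gamma]$, this immediately yields the nonlinear integral term in \eqref{eqn:fourier}.

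The next step is to record the Fourier kernels of the translation-invariant operators. An operator $m(-i\nabla)$ has kernel $(2\pi)^{-d}\int e^{i(x-y)\cdot\xi}m(\xi)\,d\xi$, whose transform is $(2\pi)^d m(k)\delta(k+p)$. So $\gammaeq$ has transform $(2\pi)^d g(k)\delta(k+p)$, and $-\Delta$ corresponds to $|k|^2$. Inserting these into the product formula gives
\[
\widehat{[m(-i\nabla),\gamma]}(k,p)=(m(k)-m(-p))\,\widehat\gamma(k,p).
\]
Since $g$ and $\omega$ are radial (by \ref{assumption:potential} and the radiality of $g$), we have $m(-p)=m(p)$.

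Next I identify $\mathcal{X}_{w_2,\gammaeq}$ as a Fourier multiplier. Its kernel is $w_2(x-y)\gammaeq(x-y)$, a function of $x-y$, so its symbol is the Fourier transform of the product $w_2\cdot\check g$, namely $(2\pi)^{-d}(\hw_2\star g)(\xi)$. Hence $H_\infty$ has symbol $|k|^2-(2\pi)^{-d}(\hw_2\star g)(k)$. This differs from $\omega$ only by the constant $(2\pi)^{-d}(\hw_2\star g)(0)$, which drops out of the commutator. This gives the term $A_{k,p}\widehat\gamma$.

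The remaining linear term is $[\cP_\gamma,\gammaeq]$. By the product formula, $\widehat{\cP_\gamma\gammaeq}(k,p)=\FcP_\gamma(k,p)g(p)$ and $\widehat{\gammaeq\cP_\gamma}(k,p)=g(k)\FcP_\gamma(k,p)$. Their difference is $-a_{k,p}\FcP_\gamma(k,p)$, as claimed.

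Finally, I compute $\FcP_\gamma$. The multiplication operator $V(x)=w_1\star\rho_\gamma$ has kernel $V(x)\delta(x-y)$, whose transform is $\widehat V(k+p)=\hw_1(k+p)\widehat\rho_\gamma(k+p)$. The exchange term contributes $-\widehat{\cX}_\gamma(k,p)$, which is just its transform, yielding \eqref{comFcP} with $\hw=\hw_1$.

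The only delicate point is bookkeeping the sign and normalization conventions: the $-\ell$ in the second slot, the $(2\pi)^d$ factors, and the fact that $\delta(k+p)$ pairs with $g(-p)=g(p)$. These hold formally for Hilbert--Schmidt $\gamma$ and are justified by density, since all symbols involved are bounded or polynomial.
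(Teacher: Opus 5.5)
Your proposal is correct and follows the same route as the paper. You Fourier-transform each term of \eqref{eqn:perturb} using the kernel product formula \eqref{FcommAB}, the diagonal Fourier kernels of the translation-invariant operators, and the transform of $\cP_\gamma$; your explicit identification of $\mathcal{X}_{w_2,\gammaeq}$ as the multiplier $(2\pi)^{-d}(\hw_2\star g)$, with the constant shift dropping out of the commutator, fills in what the paper leaves implicit. Your normalization $\Fgammaeq(k,p)=(2\pi)^{d} g(k)\delta(k+p)$ is the one consistent with the stated Fourier convention and gives exactly the coefficient $-a_{k,p}$; the paper's displayed $(2\pi)^{-d}$ appears to be a typo.
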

\begin{proof} The lemma follows from taking the Fourier transform of the Hartree--Fock equation \eqref{eqn:perturb}. Indeed, we first note that the Fourier transform of the commutator $[A,B]$, see \eqref{commAB}, is computed by  
\begin{equation}\label{FcommAB}
\mathcal{F}_{x,y}([A,B]) (k,p) = \frac{1}{(2\pi)^d} \int \Big( \FA(k,\ell)\FB(-\ell,p) - \FB(k,\ell)\FA(-\ell,p)\Big) \; d\ell,
\end{equation}
where $\FA(k,p), \FB(k,p)$ are the Fourier transform of $A(x,y), B(x,y)$, respectively. This yields the last nonlinear term appearing in \eqref{eqn:fourier}. As for the linear term, we first note that the equilibrium operator $\gamma_f = g(-i\nabla_x)$ has its Fourier transform 
\begin{equation}\label{Fequilibria}\Fgammaeq(k,p) = (2\pi)^{-d} g(k) \delta(k+p).\end{equation}
In addition, the Fourier transform of the space density and the exchange operator is computed by 
\begin{equation}\label{Frho}
\begin{aligned}
\Frho_\gamma(t,k) = \int \Fgamma(t,k-\ell,\ell) \; d\ell,
\qquad \widehat{\cX}_\gamma(t,k,p) = \int \hw_2(\ell) \widehat{\gamma}(t,k-\ell, p+\ell) \, d\ell. 
\end{aligned}
\end{equation}
Using \eqref{FcommAB}, we obtain the lemma at once. 
\end{proof}

\begin{lemma}\label{lem-FHFmu}
For any Hilbert--Schmidt operator $\gamma(t)$ solving 
\eqref{eqn:perturb}, the conjugate operator 
\begin{equation}\label{defmu}
\mu(t) = e^{-itH_\infty} \gamma(t) e^{itH_\infty}
\end{equation}
has its Fourier transform $\Fmu(t)$ satisfying
\begin{equation}\label{eqn:profile}
	\begin{aligned}
		i\partial_t \widehat{\mu}(t,k,p) 
		&= -a_{k,p} e^{itA_{k,p}} \widehat{\cP}_{\gamma} (t,k,p)
+ (2\pi)^{-d} \int \Big( e^{itA_{k, \ell-k}} \widehat{\cP}_{\gamma}(t,k, \ell-k) \widehat{\mu}(t,k-\ell, p) 
\\&\qquad - e^{-itA_{p,\ell-p}} \widehat{\mu}(t,k,p-\ell) \widehat{\cP}_{\gamma}(t,\ell-p, p) \Big) d\ell,
	\end{aligned}
\end{equation}
where $a_{k,p}$ and $A_{k,p}$ are defined as in Lemma \ref{lem-FHFmu}. 
\end{lemma}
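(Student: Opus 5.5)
The plan is to derive \eqref{eqn:profile} directly from Lemma \ref{lem-FHF}, by computing the Fourier transform of the conjugation $\mu = e^{-itH_\infty}\gamma e^{itH_\infty}$ and then applying the product rule in time.

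\textbf{Step 1: the Fourier symbol of $H_\infty$.} Since $\gammaeq = g(-i\nabla)$ is translation invariant, the exchange operator $\cX_{w_2,\gammaeq}$ has kernel $w_2(x-y)\gammaeq(x,y)$, which depends only on $x-y$. It is therefore a Fourier multiplier, with symbol $(2\pi)^{-d}(\hw_2\star g)$ evaluated at the momentum. The constant $(2\pi)^{-d}(\hw_2\star g)(0)$ in \eqref{def:omega} only shifts $H_\infty$ by a scalar, and scalars drop out of every commutator. So, up to this irrelevant constant, $H_\infty = \omega(-i\nabla)$.

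With the convention $\FK(k,p)$ for kernels in $(x,y)$, left composition by $e^{-itH_\infty}$ multiplies $\Fgamma(k,p)$ by $e^{-it\omega(k)}$. Right composition by $e^{itH_\infty}$ acts on the $y$-variable with dual $p$. Because the kernel of $f(-i\nabla)$ is $f$ evaluated at the $x$-momentum $k$, which equals $-p$ for the $y$-variable, and because $\omega$ is even ($g$ and $\hw_2$ are radial), right composition multiplies by $e^{it\omega(p)}$. Hence
\[
\Fmu(t,k,p) = e^{-itA_{k,p}}\Fgamma(t,k,p).
\]
To double-check the sign, note that $[H_\infty,\gamma]$ has symbol $A_{k,p}\Fgamma$, consistent with \eqref{eqn:fourier}.

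\textbf{Step 2: differentiate in time.} Write $i\partial_t\Fmu = e^{-itA_{k,p}}\bigl(i\partial_t\Fgamma - A_{k,p}\Fgamma\bigr)$. By \eqref{eqn:fourier}, the $A_{k,p}\Fgamma$ terms cancel. This leaves the linear term $-a_{k,p}e^{-itA_{k,p}}\FcP_\gamma$ and the bilinear term multiplied by $e^{-itA_{k,p}}$.

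\textbf{Step 3: rewrite the bilinear term in terms of $\Fmu$.} Substitute $\Fgamma(-\ell,p) = e^{itA_{-\ell,p}}\Fmu(-\ell,p)$, and use $A_{-\ell,p} = \omega(\ell)-\omega(p)$ since $\omega$ is even.

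For the first bilinear term, the phases combine as $e^{-it(\omega(k)-\omega(p))}e^{it(\omega(\ell)-\omega(p))} = e^{-itA_{k,\ell}}$. Changing variables $\ell\mapsto \ell-k$ would give $e^{-itA_{k,\ell-k}}\FcP_\gamma(k,\ell-k)\Fmu(k-\ell,p)$.

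The second bilinear term is handled the same way. Substituting $\Fgamma(k,\ell) = e^{itA_{k,\ell}}\Fmu(k,\ell)$ and changing variables $\ell\mapsto \ell-p$ produces the phase $e^{\pm itA_{p,\ell-p}}$ together with $\Fmu(k,p-\ell)\FcP_\gamma(\ell-p,p)$.

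\textbf{Main obstacle: the signs of the phases.} The delicate point is getting the sign convention of the phases right so that they match \eqref{eqn:profile}. In particular, the stated formula has $e^{+itA_{k,p}}$ in front of $\FcP_\gamma$. That sign corresponds to the convention $\Fmu = e^{itA_{k,p}}\Fgamma$, i.e. to $e^{\mp itH_\infty}$ acting through its kernel with the Fourier convention $e^{-ix\cdot k}$ applied in both variables.

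So I would fix the convention by directly computing the Fourier kernel of $e^{-itH_\infty}\gamma e^{itH_\infty}$ via \eqref{FcommAB}-type composition formulas. I would also track how $\Fgammaeq(k,p)\propto\delta(k+p)$ pairs the momenta $k$ and $-p$. Then I would redo Steps 2–3 with the consistent sign, each time using evenness of $\omega$ to convert $\omega(-\ell)$ to $\omega(\ell)$. All the remaining steps are bookkeeping.
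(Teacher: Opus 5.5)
Your route (conjugate, differentiate in time, insert \eqref{eqn:fourier}, recombine the phases and shift $\ell$) is the paper's, but as written the argument does not close. Step~1 is right: with \eqref{defmu} read literally, $\Fmu = e^{-itA_{k,p}}\Fgamma$. Since $\omega$ is even and you checked that $[H_\infty,\gamma]$ has symbol $A_{k,p}\Fgamma$, no Fourier convention is left to adjust. Step~2, however, has a sign error:
\[
i\partial_t\big(e^{-itA_{k,p}}\Fgamma\big) = e^{-itA_{k,p}}\big(i\partial_t\Fgamma + A_{k,p}\Fgamma\big).
\]
So the linear terms add up to $2A_{k,p}\Fmu$ rather than cancelling. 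At the operator level this is $i\partial_t(e^{-itH_\infty}\gamma e^{itH_\infty}) = 2[H_\infty,\mu] + e^{-itH_\infty}(\cdots)e^{itH_\infty}$. In addition, as you noticed, every phase in the forcing and bilinear terms comes out with the opposite sign. Hence for $\mu$ defined literally by \eqref{defmu}, \eqref{eqn:profile} is false. Your plan to ``fix the convention'' by recomputing the kernel of $e^{-itH_\infty}\gamma e^{itH_\infty}$ can only reproduce $e^{-itA_{k,p}}\Fgamma$, so this is not bookkeeping.

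The missing step is to recognize that \eqref{eqn:profile} is the equation satisfied by $\Fmu := e^{itA_{k,p}}\Fgamma$, i.e.\ by $\mu = e^{itH_\infty}\gamma e^{-itH_\infty}$. The exponents in \eqref{defmu} must therefore be read with the opposite signs. This is exactly what the paper's one-line proof uses (``$\Fmu(t,k,p) = e^{itA_{k,p}}\Fgamma(t,k,p)$'').

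With that definition, $i\partial_t\Fmu = e^{itA_{k,p}}(i\partial_t\Fgamma - A_{k,p}\Fgamma)$, and the $A_{k,p}\Fgamma$ terms cancel. The bilinear phases become $e^{it(A_{k,p}-A_{-\ell,p})} = e^{itA_{k,-\ell}}$ and $e^{it(A_{k,p}-A_{k,\ell})} = e^{-itA_{p,\ell}}$. The substitutions $\ell\mapsto \ell-k$ in the first term and $\ell\mapsto p-\ell$ in the second, together with evenness of $\omega$, then give \eqref{eqn:profile} exactly.
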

\begin{proof} The lemma is direct, upon noting that $\widehat{\mu}(t,k,p) = e^{itA_{k,p}}\widehat{\gamma}(t,k,p)$ and using the fact that 
$A_{k,p} - A_{\ell,p} = A_{k,\ell}$ and $A_{k,p} - A_{k,\ell} = -A_{p,\ell}$. \end{proof}

\begin{remark}\label{rem-FcP} It turns out to be crucial to note that the meanfield and exchange kernel $\cP_\gamma$ that appears in the profile equation \eqref{eqn:profile} is consistently of the form $\widehat{\cP}_{\gamma}(t,k-p, p)$, for which, recalling \eqref{comFcP}, we have 
\begin{equation}\label{comFcPinv}
\FcP_\gamma(t,k-p,p) = \hw(k) \widehat{\rho}_{\gamma}(t,k) - \widehat{\mathcal{X}}_{\gamma}(t,k-p,p).
\end{equation}
Next, in view of \eqref{Frho}, we compute
\begin{equation}\label{compcXkp}
\widehat{\cX}_\gamma(t,k-p,p) = \int \hw_2(\ell-p) \widehat{\gamma}(t,k-\ell, \ell) \, d\ell.
\end{equation}
Namely, in the appropriate coordinate shift, the exchange operator $\cX_\gamma(t)$ plays a similar role as does the space density $\rho_\gamma(t)$, recalling again \eqref{Frho}. Namely, we may expect that the exchange operator enjoys the same dispersion and phase mixing estimates as those of the space density!  
\end{remark}

\subsection{Elementary lemmas}

In this section, we record a few elementary lemmas that will be used throughout the paper. 

\begin{lemma}\label{lem:Akp}
Let $A_{k,p} = \omega(k) -\omega(p)$ with $\omega(k)$ defined as in \eqref{def:omega}. Then, there are universal positive constants $\theta_0, \theta_1, C_0$ 
so that 
\begin{equation}\label{dispAkp}
\theta_0 |k| \le |\partial_p A_{k-p,p}| \le \theta_1|k|, \qquad |  \partial_p^{\alpha}A_{k-p,p}| \le C_0 |k|
\end{equation} 
uniformly in $k$ and $p$, for any multi-index $2\le |\alpha| \le 3n_0 -1$. 
\end{lemma}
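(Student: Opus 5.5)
Write $\omega(k) = |k|^2 - h(k)$, where $h(k) := (2\pi)^{-d}\big[(\hw_2\star g)(k) - (\hw_2\star g)(0)\big]$. Then $A_{k-p,p} = \omega(k-p) - \omega(p)$, and differentiating in $p$ gives
\[
\partial_p A_{k-p,p} = -\nabla\omega(k-p) - \nabla\omega(p) = -2(k-p) - 2p + \nabla h(k-p) + \nabla h(p) = -2k + \nabla h(k-p) + \nabla h(p).
\]
The leading term $-2k$ is exactly the Schr\"odinger transport dispersion. Since $\omega$ enters through $\nabla\omega(k-p)+\nabla\omega(p)$, only the antisymmetric combination of $\nabla h$ survives. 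Concretely, $h$ is radial (both $\hw_2$ and $g$ are radial), so $\nabla h$ is odd, and therefore
\[
\nabla h(k-p) + \nabla h(p) = \nabla h(p) - \nabla h(p-k) = \int_0^1 \nabla^2 h(p - k + \tau k)\, k \, d\tau .
\]
The plan is thus to bound $\|\nabla^2 h\|_{L^\infty}$ by a small constant.

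For this, I would place the derivatives on $\hw_2$ rather than on $g$:
\[
\partial^\beta (\hw_2\star g) = (\partial^\beta \hw_2)\star g, \qquad \|(\partial^\beta\hw_2)\star g\|_{L^\infty} \le \|\partial^\beta \hw_2\|_{L^\infty}\|g\|_{L^1} \lesssim \epsilon_1 .
\]
Here $\|g\|_{L^1} = (2\pi)^d\rho_{\gammaeq}<\infty$ by \ref{equi:nonneg}, and $|\beta|\le 2n_0$ is allowed by \ref{assumption:exchange}. With $|\beta|=2$ this gives $|\partial_p A_{k-p,p} + 2k| \le C\epsilon_1|k|$. Taking $\epsilon_1$ small, say $C\epsilon_1\le 1$, yields $\theta_0 = 1$ and $\theta_1 = 3$.

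For $|\alpha|\ge 2$, the $|k|^2$ part contributes nothing: $\partial_p^\alpha$ of $|k-p|^2-|p|^2 = |k|^2 - 2k\cdot p$ vanishes once $|\alpha|\ge2$. What remains is
\[
\partial_p^\alpha A_{k-p,p} = -(-1)^{|\alpha|}\partial^\alpha h(k-p) + \partial^\alpha h(p).
\]
By the same oddness and evenness structure, this equals $\pm\big(\partial^\alpha h(p) - \partial^\alpha h(p-k)\big)$, with the sign determined by the parity of $|\alpha|$. When $|\alpha|$ is odd, $\partial^\alpha h$ is odd and the two terms still combine into this difference. The mean value theorem then bounds it by $\|\partial^{\alpha+1}h\|_{L^\infty}|k| \lesssim \epsilon_1|k|$, provided $|\alpha|+1\le 2n_0$.

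The main obstacle is the range of $\alpha$: the statement allows $|\alpha|$ up to $3n_0-1$, which exceeds the $2n_0$ derivatives available on $\hw_2$. To cover the rest, I would split $\partial^{\alpha+1} = \partial^{\beta}\partial^{\beta'}$ with $|\beta|\le 2n_0$ on $\hw_2$ and $|\beta'|\le n_0$ on $g$. Using \ref{equi:reg}, the chain rule gives $|\partial^{\beta'} g(k)| \lesssim \langle k\rangle^{-2n_1-|\beta'|}$, which is integrable since $n_1>d$. Then $\|\partial^\beta\hw_2\star\partial^{\beta'}g\|_{L^\infty}\lesssim \epsilon_1$ for every $|\alpha|+1\le 3n_0$. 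This is precisely the stated range, so $C_0$ can be taken to depend only on $n_0,f,d$, and the bound remains small.
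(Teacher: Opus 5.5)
Your proof is correct and follows essentially the same route as the paper: you write $\omega(k)=|k|^2-h(k)$ with $h$ radial and all derivatives of order $\ge 2$ of size $O(\epsilon_1)$, so that $\partial_p A_{k-p,p}=-2k+\nabla h(p)-\nabla h(p-k)$, and the mean value theorem then gives both bounds. Your splitting of up to $3n_0$ derivatives into at most $2n_0$ on $\hw_2$ and at most $n_0$ on $g$ is exactly what the paper's remark ``$g\in W^{n_0,1}$ and $\hw_2\in W^{2n_0,\infty}$'' is meant to supply. One small correction: for every parity of $|\alpha|$ the combination is $\partial^\alpha h(p)-\partial^\alpha h(p-k)$, so the $\pm$ is unnecessary, though this does not affect the bound.
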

\begin{proof} We write $\omega(k) = |k|^2 - m(k)$, where
	\[
		m(k):= - \frac{1}{(2\pi)^d} (\hw_2 \star g)(k) + \frac{1}{(2\pi)^d} (\hw_2 \star g)(0).
	\]
Recall that $g \in W^{n_0, 1}$ and $\hw_2 \in W^{2n_0,\infty}$ with a sufficiently small norm, see Assumption \ref{assumption:exchange}. 
As a result, $\| \nabla^2 m \|_{L^{\infty}} \ll1$. Therefore, using the mean value theorem and the fact that $\hw_2$ and $g$ are radial, we bound 
	\[
		|\partial_p A_{k-p,p}| = |-2k - \nabla m(k-p) + \nabla m(-p)| \ge (2- \| \nabla^2 m \|_{L^{\infty}}) |k| \ge \theta_0|k|
	\]
for some $\theta_0>0$. Similarly, we have
	\[
		| \partial_p A_{k-p,p} | \le (2 +\| \nabla^2 m \|_{L^{\infty}}) |k|,
	\]
proving that $|\partial_p A_{k-p,p}| \sim |k|$. Higher derivatives follow similarly.  
\end{proof}

\begin{lemma}\label{lem:akp} Let $a_{k,p}=g(k) - g(p)$ with $g(k)= f(|k|^2)$ being the equilibrium. Let $n_0, n_1$ be the regularity and decay indices as in Assumption \ref{equi:reg}. Then, for all $|\alpha| \le n_0 -1$, we have
\begin{equation}\label{dervakp}
		|\partial_p^{\alpha} a_{k-p,p}| \lesssim \frac{|k|}{\langle k \rangle} \left( \langle k-p \rangle^{-2n_1} +  \langle p \rangle^{-2n_1} \right)
	\end{equation}
	uniformly in $k$ and $p$. 
\end{lemma}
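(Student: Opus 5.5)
The plan is to reduce everything to pointwise bounds on derivatives of $g(k)=f(|k|^2)$, and then to split into the regimes $|k|\ge 1$ and $|k|\le 1$. The factor $|k|/\langle k\rangle$ is harmless when $|k|\ge1$. When $|k|\le1$ it has to come from a cancellation between the two terms $g(k-p)$ and $g(p)$, via the mean value theorem and the radial symmetry of $g$.

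\emph{Step 1: symbol bounds for $g$.} I will show that
\[
|\partial^\beta g(q)|\lesssim \langle q\rangle^{-2n_1-|\beta|}\qquad\text{for all } |\beta|\le n_0 .
\]
By the chain rule (Faà di Bruno), $\partial^\beta [f(|q|^2)]$ is a finite sum of terms of the form $f^{(j)}(|q|^2)\,P_j(q)$. Here $j\le|\beta|$ and $P_j$ is a polynomial of degree $2j-|\beta|\ge 0$. Assumption \ref{equi:reg} gives $|f^{(j)}(|q|^2)|\lesssim\langle q\rangle^{-2n_1-2j}$. Hence each term is bounded by
\[
\langle q\rangle^{-2n_1-2j}\langle q\rangle^{2j-|\beta|}=\langle q\rangle^{-2n_1-|\beta|}.
\]

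\emph{Step 2: the case $|k|\ge1$.} Here $|k|/\langle k\rangle\ge 1/\sqrt2$, so it suffices to bound the two pieces of $a_{k-p,p}$ separately. For any $|\alpha|\le n_0-1$, Step 1 gives
\[
|\partial_p^\alpha[g(k-p)]|=|(\partial^\alpha g)(k-p)|\lesssim\langle k-p\rangle^{-2n_1},\qquad |\partial_p^\alpha g(p)|\lesssim\langle p\rangle^{-2n_1}.
\]
This yields \eqref{dervakp} in this regime.

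\emph{Step 3: the case $|k|\le1$.} This is the only place where care is needed. Since $g$ is radial, it is even, so $\partial^\alpha g$ has parity $(-1)^{|\alpha|}$. Therefore
\[
\partial_p^\alpha[g(k-p)]=(-1)^{|\alpha|}(\partial^\alpha g)(k-p)=(\partial^\alpha g)(p-k).
\]
It follows that
\[
\partial_p^\alpha a_{k-p,p}=(\partial^\alpha g)(p-k)-(\partial^\alpha g)(p)=-\int_0^1 k\cdot\nabla\partial^\alpha g(p-\theta k)\,d\theta .
\]
This uses one more derivative of $g$, which is allowed because $|\alpha|+1\le n_0$. By Step 1 the right-hand side is bounded by $|k|\sup_{\theta\in[0,1]}\langle p-\theta k\rangle^{-2n_1-|\alpha|-1}$. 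Since $|\theta k|\le1$, we have $\langle p-\theta k\rangle\gtrsim\langle p\rangle$, so the bound becomes $\lesssim|k|\langle p\rangle^{-2n_1}$. Finally $|k|\lesssim |k|/\langle k\rangle$ for $|k|\le 1$, which gives \eqref{dervakp}.

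The main points to get right are the parity sign in Step 3, which is what makes the difference of the two terms cancel, and the derivative count $|\alpha|+1\le n_0$. Once these are checked, the rest is routine.
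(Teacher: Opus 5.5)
Your proposal is correct and follows essentially the same route as the paper. The paper also splits into $|k|\ge 1$, bounding the two terms separately via $|\partial^\beta g(q)|\lesssim\langle q\rangle^{-2n_1-|\beta|}$, and $|k|\le 1$, applying the mean value theorem to $(\partial^\alpha g)(p-k)-(\partial^\alpha g)(p)$ with one extra derivative; you additionally justify the symbol bounds for $g$ via Faà di Bruno and make explicit the radial/parity step and the inequality $\langle p-\theta k\rangle\gtrsim\langle p\rangle$, all of which the paper takes for granted.
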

\begin{proof}
	Let $|\alpha| \le n_0 -1$. By Assumption \ref{equi:reg}, $|\partial_p^{\beta} g(p)| \lesssim \langle p \rangle^{-2n_1 - |\beta|}$ for $|\beta| \le n_0$. When $|k| \le 1$, we bound 
	\[
		\partial_p^{\alpha}a_{k-p,p}  = (\partial_p^{\alpha} g)(p-k) - (\partial_p^{\alpha}g)(p)
		\lesssim |k| \sup_{t \in [0,1]} |\partial^{|\alpha| +1} g (p-tk)| \lesssim |k| \langle p \rangle^{-2n_1}
	\]
	upon using the mean value theorem.
	When $|k| \ge 1$, we simply bound
	\[
		|\partial_p^{\alpha}a_{k-p,p}| = |\partial_p^{\alpha} g(k-p) | + |\partial_p^{\alpha} g(p)|
		\lesssim \langle p-k \rangle^{-2n_1} + \langle p \rangle^{-2n_1},
	\]
	which follows from the decay of $\partial^{\alpha}_p g(p)$. 
\end{proof}

\begin{lemma}\label{lemAphase}
Let $A_{k,p} = \omega(k) -\omega(p)$ with $\omega(k)$ defined as in \eqref{def:omega}. Then, for any $F \in W^{N,1}$ with $N \le 3n_0 -2$, there holds
	\[
		\left| \int e^{-it A_{k-p,p}} F(p) \, dp \right| \lesssim \langle k t\rangle^{-N} \| F \|_{W^{N,1}},
	\]
	uniformly in $k\not =0$. 
\end{lemma}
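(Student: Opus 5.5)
The bound is trivial when $|kt|\le 1$: there $\langle kt\rangle^{-N}\sim 1$, and the integral is bounded by $\|F\|_{L^1}\le \|F\|_{W^{N,1}}$. So assume $|kt|\ge 1$ and aim for the decay $|kt|^{-N}$. The plan is a non-stationary phase argument. By Lemma \ref{lem:Akp}, the phase $\phi(p) := A_{k-p,p}$ has no critical points, since $|\partial_p\phi|\ge \theta_0|k|>0$ for $k\neq 0$.

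The repeated integration by parts uses the first-order operator
\[
L := \frac{i}{t}\,\frac{\partial_p \phi\cdot \partial_p}{|\partial_p\phi|^2},\qquad L\, e^{-it\phi} = e^{-it\phi},
\]
together with its formal transpose
\[
L^{T}F = -\frac{i}{t}\,\partial_p\cdot\Big(\frac{\partial_p\phi}{|\partial_p\phi|^2}F\Big).
\]
Integrating by parts $N$ times gives
\[
\int e^{-it\phi}F\,dp = \int e^{-it\phi}(L^{T})^N F\,dp.
\]
The boundary terms vanish: first argue for $F\in C_c^\infty$, then extend to $W^{N,1}$ by density, since both sides are continuous in the $W^{N,1}$ norm once the coefficient bounds below are established.

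The core step is bounding the coefficients. Write
\[
V(p) := \frac{\partial_p\phi}{|\partial_p\phi|^2}.
\]
I claim that for all $|\beta|\le N$,
\[
|\partial_p^\beta V|\lesssim |k|^{-1}.
\]
For $\beta = 0$ this is immediate from $|\partial_p\phi|\ge\theta_0|k|$. For $\beta\neq 0$, expand $\partial_p^\beta V$ with the quotient rule and Faà di Bruno. Each term is a product of derivatives $\partial_p^{\alpha_j}\phi$, each bounded by $C_0|k|$ via Lemma \ref{lem:Akp} as long as the orders satisfy $|\alpha_j|\le N+1\le 3n_0-1$. These are divided by powers of $|\partial_p\phi|^2\gtrsim|k|^2$. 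A homogeneity count settles it: every numerator factor carries one power of $|k|$, and the denominator supplies exactly one extra power of $|k|^{-1}$, so each term is $O(|k|^{-1})$. This is precisely where the hypothesis $N\le 3n_0-2$ enters, through the range of derivatives controlled by Lemma \ref{lem:Akp}.

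With the coefficient bound in hand, expanding $(L^T)^N F$ gives a sum of terms of the form $t^{-N}$ times products of $N$ derivatives of $V$ times $\partial_p^\gamma F$, where $|\gamma|\le N$. Each term is therefore bounded pointwise by
\[
|t|^{-N}|k|^{-N}|\partial_p^\gamma F|.
\]
Integrating in $p$ yields $|kt|^{-N}\|F\|_{W^{N,1}}$, and combining with the trivial case $|kt|\le 1$ proves the lemma.

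The main obstacle is the Faà di Bruno bookkeeping that shows the derivatives of $V$ stay $O(|k|^{-1})$ uniformly in $p$, with no growth in $p$ and no loss in $k$. This uniformity relies on Lemma \ref{lem:Akp} giving $|\partial_p^\alpha\phi|\lesssim|k|$ for every higher derivative, not just the first. That in turn rests on the identity
\[
\partial_p^\alpha A_{k-p,p} = (-1)^{|\alpha|}\,\partial^\alpha\omega(k-p) - \partial^\alpha\omega(p),
\]
which for $|\alpha|\ge 2$ is a difference of derivatives of the smooth function $m$ (plus the vanishing quadratic part), and is therefore $O(|k|)$ by the mean value theorem.
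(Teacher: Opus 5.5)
Your argument is correct and is essentially the paper's: the paper simply applies the non-stationary phase Lemma \ref{lem:IBP} to $\Phi = tA_{k-p,p}$ with $\lambda = \theta_0|kt|$, verifying \eqref{phase-cond} through Lemma \ref{lem:Akp}, and your bound $|\partial_p^\beta V| \lesssim |k|^{-1}$ followed by expanding $(L^T)^N F$ is exactly that lemma written out for this phase. One small point in your last paragraph: for odd $|\alpha|$, the expression $(-1)^{|\alpha|}\partial^\alpha\omega(k-p) - \partial^\alpha\omega(p)$ only becomes a \emph{difference} to which the mean value theorem applies after you use that $m$ is even (from the radial symmetry of $\hw_2$ and $g$, as in the proof of Lemma \ref{lem:Akp}); since you cite that lemma, this does not affect the proof.
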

\begin{proof}
The lemma is a direct application of Lemma \ref{lem:IBP}, upon using \eqref{dispAkp} and noting that the ratio of $\partial_p^\alpha A_{k-p,p}$ to $\partial_p A_{k-p,p}$ is uniformly bounded in $k\not =0$ for any $|\alpha|\ge 2$.  
\end{proof}

\section{Linear analysis}

In this section, we study the linear inhomogeneous equation
\begin{equation}\label{eqn:perturb-linear}
	i\partial_t \gamma = [-\Delta - \mathcal{X}_{w_2, \gammaeq}, \, \gamma] + [w_1 \star_x \rho_{\gamma}, \, \gammaeq] + F
\end{equation}
with initial data $\gamma(0) = \gamma_0$, where $F=F(t)$ is an arbitrary forcing term. We follow the Fourier--Laplace framework \cite{NguyenYou2025, You2024} that were developed earlier to treat the linearized Hartree equations (i.e. linearization of \eqref{eqn:perturb} without the exchange term). From now on, we drop the subscripts when no confusion is possible.

\subsection{Penrose--Lindhard dispersion}
We begin with introducing the Penrose--Lindhard dispersion relation $D(\lambda, k)$.

\begin{proposition}\label{prop:resolvent-linear}
	Consider the linear equation \eqref{eqn:perturb-linear} with initial data $\gamma(0)= \gamma_0$. Let $\rho(t,x) = \gamma(t,x,x)$ be the density associated with the solution of $\eqref{eqn:perturb-linear}$. Let $\widetilde{\rho}(\lambda, k)$ denote the Fourier--Laplace transform of $\rho$. For $\Re \lambda >0$ and $k \in \mathbb{R}^d$, one can write
	\begin{equation}\label{eqn:resolvent-linear}
		D(\lambda, k) \widetilde{\rho}(\lambda, k) 
		= \widetilde{S}(\lambda,k)
	\end{equation}
	where
	\begin{equation}\label{def:disp}
		D(\lambda, k) = 1 - i\hw_1(k)\int_{\RR^d} \frac{a_{k-p,p}}{\lambda + i A_{k-p,p}} \, dp,
	\end{equation}
	and
	\begin{equation}\label{def:cS}
	\widetilde{S}(\lambda,k)= \int \frac{\widehat{\gamma}_0 (k-p,p)}{\lambda + i A_{k-p,p}} \, dp - \int \frac{i\widetilde{F}(\lambda, k-p,p)}{\lambda + i A_{k-p,p}} \, dp.
	\end{equation}
\end{proposition}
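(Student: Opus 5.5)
The plan is to follow the same Fourier computation as in Lemmas \ref{lem-FHF}--\ref{lem-FHFmu}, but for the linear problem \eqref{eqn:perturb-linear}, and then close the equation on the density by integrating along the diagonal shift $(k-p,p)$.

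First, take the Fourier transform in $x,y$ of \eqref{eqn:perturb-linear}. The term $[-\Delta - \cX_{w_2,\gammaeq},\gamma]$ has symbol $A_{k,p}\Fgamma(t,k,p)$, exactly as in the derivation of \eqref{eqn:fourier}. Here we use that $\FcX_{w_2,\gammaeq}$ is a Fourier multiplier by \eqref{Fequilibria}, and that the additive constant in \eqref{def:omega} cancels in the difference $A_{k,p}=\omega(k)-\omega(p)$. For the meanfield term we use \eqref{FcommAB} together with \eqref{Fequilibria}. The multiplication operator $w_1\star\rho$ has kernel transform $\hw_1(k+p)\Frho(k+p)$, up to the normalization fixed in Lemma \ref{lem-FHF}. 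We therefore get $-a_{k,p}\hw_1(k+p)\Frho(t,k+p)$, as in the linear part of \eqref{eqn:fourier} with the exchange piece of $\cP_\gamma$ dropped. Evaluating at $(k-p,p)$ gives
\[
i\partial_t \Fgamma(t,k-p,p) = A_{k-p,p}\Fgamma(t,k-p,p) - a_{k-p,p}\hw_1(k)\Frho(t,k) + \widehat F(t,k-p,p).
\]
The point is that $\hw_1(k)\Frho(t,k)$ does not depend on $p$, as noted in Remark \ref{rem-FcP}.

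Next, apply the Laplace transform in $t$ for $\Re\lambda>0$. Using $\int_0^\infty e^{-\lambda t}\partial_t u\,dt=\lambda\tilde u-u(0)$, we get
\[
i(\lambda\tilde\gamma - \Fgamma_0) = A\tilde\gamma - a\hw_1\tilde\rho + \tilde F .
\]
This rearranges to
\[
(\lambda + iA_{k-p,p})\tilde\gamma(\lambda,k-p,p) = \Fgamma_0(k-p,p) + i a_{k-p,p}\hw_1(k)\tilde\rho(\lambda,k) - i\tilde F(\lambda,k-p,p).
\]
Since $A$ is real, $|\lambda+iA|\ge\Re\lambda>0$, so we may divide. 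We then integrate in $p$ and use $\tilde\rho(\lambda,k)=\int\tilde\gamma(\lambda,k-p,p)\,dp$ from \eqref{Frho}. Moving the $\tilde\rho$ term to the left-hand side yields \eqref{eqn:resolvent-linear}, with $D$ given by \eqref{def:disp} and $\widetilde S$ given by \eqref{def:cS}.

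The only delicate step is justifying the manipulations: the Laplace transform of $\Fgamma$, the exchange of $\int dp$ with $\int dt$, and convergence of $\int a_{k-p,p}(\lambda+iA_{k-p,p})^{-1}dp$.
\begin{itemize}
\item The last integral converges absolutely, by Lemma \ref{lem:akp} (decay $\langle k-p\rangle^{-2n_1}+\langle p\rangle^{-2n_1}$ with $2n_1>d$) and the bound $|\lambda+iA|^{-1}\le(\Re\lambda)^{-1}$.
\item For the terms involving $\gamma$, I would assume a priori that $\Fgamma(t)$ and $\widehat F(t)$ are integrable along the diagonal $(k-p,p)$ with at most subexponential growth in $t$. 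The uniform bounds of Lemma \ref{lemAphase} type, or the weighted $L^\infty_{k,p}$ bounds on the data, provide this, so Fubini applies for every $\Re\lambda>0$.
\end{itemize}
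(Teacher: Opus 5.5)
Your proposal is correct and matches the paper's proof step for step. Both take the Fourier transform, evaluate at $(k-p,p)$ so the meanfield term becomes the $p$-independent $a_{k-p,p}\widehat{w}_1(k)\widehat{\rho}(t,k)$, then Laplace transform, divide by $\lambda + iA_{k-p,p}$, integrate in $p$, and rearrange. Your added remarks on absolute convergence and on the Fubini/growth assumptions go slightly beyond the paper, which carries out these manipulations formally.
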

\begin{proof}
	We first write \eqref{eqn:perturb-linear} in terms of integral kernels $\gamma(x,y)$ and take the Fourier transform in $x$ and $y$, with dual variables $k$ and $p$. Then we obtain
	\begin{equation}\label{eq:HF-Fourier2}
		\begin{aligned}
			i\partial_t \hgamma(t,k,p) 
			&= A_{k,p} \hgamma(t,k,p) - a_{k,p}\hw_1(k+p) \widehat{\rho}(t,k+p) + \widehat{F}(t,k,p).
		\end{aligned}
	\end{equation}
	Next, we evaluate \eqref{eq:HF-Fourier2} at $(k-p,p)$ and take the Laplace transform in $t$ with the dual variable $\lambda$. This yields
	\[
	(\lambda + i A_{k-p, p}) \widetilde{\gamma}(\lambda, k-p,p) = \widehat{\gamma}_0 (k-p,p) + i a_{k-p,p} \hw_1(k) \widetilde{\rho}(\lambda, k) -i  \widetilde{F}(\lambda, k-p,p).
	\]
	Dividing by $\lambda + i A_{k-p, p}$, we get 
	\[
	\widetilde{\gamma}(\lambda, k-p, p) = \frac{\widehat{\gamma}_{0}(k-p,p)}{\lambda + i A_{k-p,p}} + \frac{i a_{k-p,p}}{\lambda + iA_{k-p,p}} \hw_1(k) \widetilde{\rho}(\lambda,k) - \frac{i\widetilde{F}(\lambda, k-p,p)}{\lambda + i A_{k-p,p}} .
	\]
	Integrating with respect to $p$ yields
	\[
	\widetilde{\rho}(\lambda, k) = \int \frac{\widehat{\gamma}_0 (k-p,p)}{\lambda + i A_{k-p,p}} \, dp + \hw_1(k) \widetilde{\rho}(\lambda,k) \int \frac{ia_{k-p,p}}{\lambda + iA_{k-p,p}} \, dp - \int \frac{i\widetilde{F}(\lambda, k-p,p)}{\lambda + i A_{k-p,p}} \, dp.
	\]
	Rearranging this gives 
	\[
		D(\lambda, k)\widetilde{\rho}(\lambda, k) = \widetilde{S}(\lambda, k),
	\]
	where $D(\lambda, k)$ and $\widetilde{S}(\lambda, k)$ are defined by \eqref{def:disp} and \eqref{def:cS}, respectively. This completes the proof of \Cref{prop:resolvent-linear}.
\end{proof}

The following proposition extends $D(\lambda,k)$ to the boundary $\{ \Re \lambda =0\}$ for each $k \neq 0$.
\begin{proposition}\label{prop:disp-extension}
	For each $k \in \mathbb{R}^d \setminus \{0 \}$, the dispersion relation $D(\lambda,k)$ admits a continuous extension to $\{ \Re \lambda \ge 0\}$. It satisfies 
	\begin{equation}\label{eqn:disp-timeint}
		D(\lambda, k) = 1 -i \hw_1(k) \int_0^{\infty} e^{-\lambda t} \int_{\RR^d} e^{-itA_{k-p,p}} a_{k-p,p} \, dp \, dt
	\end{equation}
	for $\Re \lambda \ge 0 $ and $k \neq 0$.
\end{proposition}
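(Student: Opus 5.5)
The plan is to rewrite the resolvent in $D(\lambda,k)$ as a Laplace transform in time, and then show that the resulting time integrand is absolutely integrable on $[0,\infty)$ whenever $k\neq 0$. For $\Re\lambda>0$ we have the elementary identity
\[
\frac{1}{\lambda + iA_{k-p,p}} = \int_0^\infty e^{-\lambda t} e^{-itA_{k-p,p}}\,dt ,
\]
because $A_{k-p,p}$ is real. We will insert this into \eqref{def:disp} and swap the $p$ and $t$ integrals by Fubini. The swap is justified because
\[
\int_0^\infty\!\!\int |e^{-\lambda t}a_{k-p,p}|\,dp\,dt \le (\Re\lambda)^{-1}\|a_{k-p,\cdot}\|_{L^1_p},
\]
and $\|a_{k-p,\cdot}\|_{L^1_p}$ is finite by Lemma \ref{lem:akp} with $\alpha=0$ (recall $n_1>d$). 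This gives \eqref{eqn:disp-timeint} for $\Re\lambda>0$, with the inner integral
\[
G(t,k):=\int e^{-itA_{k-p,p}}a_{k-p,p}\,dp .
\]

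The key step is a decay bound for $G(t,k)$ in $t$ when $k\neq0$ is fixed. We will apply Lemma \ref{lemAphase} with $F(p)=a_{k-p,p}$ and some $N\ge 2$ with $N\le n_0-1$; this is allowed since $n_0>d+3$. Lemma \ref{lem:akp} gives
\[
\|a_{k-\cdot,\cdot}\|_{W^{N,1}}\lesssim \frac{|k|}{\langle k\rangle}\int\big(\langle k-p\rangle^{-2n_1}+\langle p\rangle^{-2n_1}\big)dp \lesssim 1,
\]
uniformly in $k$. Hence $|G(t,k)|\lesssim\langle kt\rangle^{-N}$, which is integrable in $t\in[0,\infty)$ for every fixed $k\neq0$, since
\[
\int_0^\infty\langle kt\rangle^{-N}dt\lesssim |k|^{-1}.
\]

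With this bound, the right side of \eqref{eqn:disp-timeint} makes sense for all $\Re\lambda\ge0$. It is continuous in $\lambda$ on the closed half-plane by dominated convergence, with dominating function $\langle kt\rangle^{-N}$, because $|e^{-\lambda t}|\le1$ there. It also agrees with $D(\lambda,k)$ on $\Re\lambda>0$, so it is the claimed continuous extension.

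The main obstacle is the uniformity in $p$ needed for the oscillatory estimate, namely the nondegeneracy $|\partial_pA_{k-p,p}|\gtrsim|k|$ from Lemma \ref{lem:Akp}. That is already packaged inside Lemma \ref{lemAphase}, so the only real check is that the $W^{N,1}$ norm of $a_{k-p,p}$ is finite, which Lemma \ref{lem:akp} supplies. If one also wants joint continuity in $(\lambda,k)$ for $k$ away from $0$, the same domination applies locally uniformly in $k$ on compact subsets of $\mathbb{R}^d\setminus\{0\}$, provided $\hw_1$ is continuous.
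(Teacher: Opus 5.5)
Your proposal is correct and follows essentially the same route as the paper: write $D$ as a Laplace transform in time, use non-stationary phase (Lemmas \ref{lem:Akp}, \ref{lem:akp}, \ref{lemAphase}) to get $\langle kt\rangle^{-N}$ decay of the inner $p$-integral, and conclude that it is integrable in $t$ for fixed $k\neq 0$, which gives the continuous extension. The differences are cosmetic: you take a general $N\ge 2$ where the paper uses $N=2$, and you spell out the Fubini and dominated-convergence steps that the paper leaves implicit.
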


\begin{proof}
Fix $k \neq 0$. The expression \eqref{eqn:disp-timeint} for $\Re \lambda >0$ follows from the definition \eqref{def:disp}. We set
\[
	J(t,k)= \int_{\RR^d} e^{-itA_{k-p,p}} a_{k-p,p} \, dp .
\]
Integrating by parts in $p$ and using \Cref{lem:Akp,lem:akp}, we get
\[
	\int_0^{\infty} |J(t,k)| \, dt
	\lesssim  \| a_{k-p,p} \|_{W^{2,1}} \int_0^{\infty} \langle kt \rangle^{-2} \, dt  \lesssim  \langle k \rangle^{-1}.
\]
Therefore, for each $k \neq 0$, 
\[
	D(\lambda, k) = 1- i\hw_1(k) \int_0^{\infty} e^{-\lambda t} J(t,k) \, dt
\]
defines a continuous extension to $\{ \Re \lambda \ge 0 \}$.
\end{proof}

\subsection{Penrose--Lindhard stability}

The main goal of this section is to prove that the equilibria $\gammaeq = f(-\Delta)$ described as in Section \ref{sec-Assmp} are spectrally stable and satisfy the strong  
Penrose--Lindhard stability condition. Precisely, we obtain the following.

\begin{theorem}\label{thm:Penrose-Lindhard}
	Let $\gammaeq= f(-\Delta)$ be the equilibrium, and let $w_j$, $j=1,2$, be the interaction potentials. Assume that $f$ and $w_j$ satisfy \ref{equi:nonneg}--\ref{assumption:exchange}. Then
	\begin{equation}\label{eq:Penrose-Lindhard}
		\inf_{k \in \mathbb{R}^d \setminus \{ 0\}} \inf_{\Re \lambda \ge 0} | D(\lambda, k)| \ge c_0
	\end{equation}
	for some $c_0 >0$.
\end{theorem}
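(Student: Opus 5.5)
The plan is to first rewrite $D(\lambda,k)$ with a symmetrization, then split $\{\Re\lambda\ge0\}\times(\RR^d\setminus\{0\})$ into an ``asymptotic'' region, where $D$ is close to $1$, and a compact region. In the compact region $D$ cannot vanish, by a sign argument on the imaginary axis and continuation to $\Re\lambda>0$. The hardest step is the small-$|k|$ limit, which I check last.

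\emph{Symmetrization.} Write $a_{k-p,p}=g(k-p)-g(p)$ and substitute $p\mapsto k-p$ in the term containing $g(k-p)$, using $A_{p,k-p}=-A_{k-p,p}$. This gives
\[
\int\frac{a_{k-p,p}}{\lambda+iA_{k-p,p}}\,dp=\int g(p)\Big(\frac{1}{\lambda-iA_{k-p,p}}-\frac{1}{\lambda+iA_{k-p,p}}\Big)dp,
\]
so that
\[
D(\lambda,k)=1+2\hw_1(k)\int\frac{g(p)A_{k-p,p}}{\lambda^2+A_{k-p,p}^2}\,dp .
\]
When $\hw_2\equiv0$ we have $A_{k-p,p}=|k|^2-2k\cdot p$. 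The integral then only involves the hyperplane projection
\[
G(s)=\int_{\RR^{d-1}}f(s^2+|q|^2)\,dq = c_d\int_{s^2}^\infty f(e)(e-s^2)^{\frac{d-3}{2}}\,de .
\]
Because $f>0$ and $d\ge3$, $G$ is even and strictly decreasing in $|s|$, with no monotonicity assumption on $f$ needed. This is where (H1) and $d\ge3$ enter. For $\hw_2\neq0$, Lemma~\ref{lem:Akp} and (H4) make the level sets of $A_{k-p,p}$ $O(\epsilon_1)$-perturbations of these hyperplanes in $C^2$. I will carry every sign statement below with an $O(\epsilon_1)$ error and absorb it using the smallness of $\epsilon_1$.

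\emph{Asymptotic regions.} By Proposition~\ref{prop:disp-extension} and the bound $\int_0^\infty|J(t,k)|\,dt\lesssim\langle k\rangle^{-1}$, we get $|D-1|\le\|\hw_1\|_{L^\infty}C\langle k\rangle^{-1}\le\tfrac12$ for $|k|\ge R$. For large $|\lambda|$, one more integration by parts in $t$ (or Riemann--Lebesgue applied to $e^{-\lambda t}J(t,k)$, uniformly for $k$ in compacts) gives $|D-1|\le\tfrac12$ for $|\lambda|\ge M$. For small $|k|$, I rescale $\lambda=|k|\nu$. Using $A_{k-p,p}=|k|\big(-2\hat k\cdot p+O(|k|)\big)$, the function $D(|k|\nu,k)$ converges as $k\to0$, uniformly in $\Re\nu\ge0$, to a limit $D_0(\nu,\hat k)$ of the same form as above with $G$ replacing $g$. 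The nonvanishing of $D_0$ follows by the same sign argument as in the next paragraph. I expect this small-$|k|$ limit to be the main technical obstacle: uniformity in $\nu$ near the imaginary axis needs the time representation \eqref{eqn:disp-timeint} together with Lemma~\ref{lemAphase} at scale $|k|t$.

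\emph{Compact region.} On $\{\delta\le|k|\le R,\ |\lambda|\le M\}$, $D$ is continuous, so it suffices to show $D\neq0$ pointwise. On the imaginary axis $\lambda=i\tau$, the Plemelj formula gives, up to $O(\epsilon_1)$,
\[
\Im D(i\tau,k)=c\,\frac{\hw_1(k)}{|k|}\Big[G\big(\tfrac{|k|^2-\tau}{2|k|}\big)-G\big(\tfrac{|k|^2+\tau}{2|k|}\big)\Big].
\]
This is nonzero whenever $\tau\neq0$ and $\hw_1(k)>0$, since $G$ strictly decreases in $|s|$ and the two arguments have different absolute values. If $\hw_1(k)=0$ then $D\equiv1$. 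At $\tau=0$, $D(0,k)$ is real and equals
\[
1+c\,\frac{\hw_1(k)}{|k|}\,\mathrm{P.V.}\!\int_0^\infty\frac{G(\tfrac{|k|}{2}-u)-G(\tfrac{|k|}{2}+u)}{u}\,du\ \ge\ 1-O(\epsilon_1),
\]
again by the monotonicity of $G$.

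\emph{Interior.} For $\Re\lambda>0$, I argue by the argument principle or degree in $\lambda$ for fixed $k$. The map $\lambda\mapsto D(\lambda,k)$ is analytic, tends to $1$ at infinity, and on the boundary $i\RR$ it crosses the real axis only at $\tau=0$, where its value is positive. So its winding number around $0$ vanishes and $D$ has no zeros in $\{\Re\lambda>0\}$. Combining the compact and asymptotic regions by continuity and compactness gives the uniform lower bound $c_0$.
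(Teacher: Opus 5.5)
Your route differs from the paper's. The paper gets exact signs for the level-set density $h_k$ on the \emph{curved} level sets of $A_{k-p,p}$, using that $\Omega$ is increasing. It also uses $f'<0$ in Lemma~\ref{lem:disp-hku}, which is not among (H1)--(H4). Your hyperplane projection $G$ needs only $f>0$ and $d\ge 3$, but it is exact only when $\hw_2\equiv 0$. So everything rests on your perturbation step, and as formulated it fails. You propose to carry each sign statement with an additive $O(\epsilon_1)$ error and absorb it. But the signed quantity you use on the imaginary axis, $G(\frac{|k|^2-\tau}{2|k|})-G(\frac{|k|^2+\tau}{2|k|})\approx -G'(|k|/2)\,\tau/|k|$, vanishes linearly as $\tau\to 0$, and its small-$k$ analogue does the same. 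Hence no fixed $\epsilon_1$ absorbs an additive error on all of $\{\tau\neq 0\}$. Your compact-region step and your winding-number step both rely on $\tau\mapsto D(i\tau,k)$ meeting the real axis only at $\tau=0$, and this is therefore not established when $\hw_2\neq 0$. Separately, the assertion that the level sets are $C^2$-close to hyperplanes is never turned into a quantitative bound on $D$.

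The repair is to perturb $D$ itself rather than signs. Let $D^{(0)},J^{(0)},A^{(0)}$ denote the corresponding objects with $\hw_2\equiv 0$. Note that $a_{k-p,p}$ does not involve $\hw_2$. With $m$ as in the proof of Lemma~\ref{lem:Akp}, $|A_{k-p,p}-A^{(0)}_{k-p,p}|=|m(p)-m(k-p)|\le \|\nabla m\|_{L^\infty}|k|\lesssim \epsilon_1|k|$. Lemmas~\ref{lem:akp} and~\ref{lemAphase} give $|J|,|J^{(0)}|\lesssim \frac{|k|}{\langle k\rangle}\langle kt\rangle^{-2}$ uniformly for small $\epsilon_1$, including $\hw_2\equiv 0$. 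Therefore
\[
|J(t,k)-J^{(0)}(t,k)|\lesssim \frac{|k|}{\langle k\rangle}\min\{\epsilon_1|k|t,\ \langle kt\rangle^{-2}\},\qquad \int_0^\infty |J-J^{(0)}|\,dt\lesssim \epsilon_1^{1/3},
\]
and by \eqref{eqn:disp-timeint}, $|D-D^{(0)}|\lesssim \|\hw_1\|_{L^\infty}\epsilon_1^{1/3}$ uniformly in $k\neq 0$ and $\Re\lambda\ge 0$. It then suffices to have a uniform bound $|D^{(0)}|\ge c_0^{(0)}>0$. Your exact argument supplies this with no $\epsilon_1$ bookkeeping: Plemelj with $G$, the argument principle, the two asymptotic regions, and the small-$k$ limit. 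One caution on that limit: in the symmetrized form the leading term cancels by parity, so the limit is built from $G'$ rather than $G$. The time representation with $a_{k-p,p}/|k|\to -\nabla g(p)\cdot \hat k$ handles this automatically. With this replacement your proof covers (H1)--(H4) as stated, with no monotonicity assumption on $f$.
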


The condition \eqref{eq:Penrose-Lindhard} is often referred to as the strong  
Penrose--Lindhard stability condition, a quantum analogue of the classical Penrose stability condition in plasma physics. \Cref{thm:Penrose-Lindhard} implies invertibility of the linearized operator (without the exchange term) in $L^2_{t,x}$ whose spacetime Fourier--Laplace symbol is $D(\lambda,k)$. Recall that \Cref{prop:disp-extension} gives an extension of $D(\lambda, k)$ to $\{ \Re \lambda =0 \}$ for each $k \neq 0$. We remark that the behavior of $D(\lambda, k)$ near $(0,0)$ is delicate. For small $|k|$, both $a_{k-p,p}$ and $A_{k-p,p}$ are of order $|k|$, so that $D(\lambda, k)$ might have a nontrivial limit when $|\lambda| \sim |k|$, see \eqref{def:disp}. In view of this, we define
\begin{equation}\label{def:tilde-aA}
	\widetilde{a}_{k,p}:= \frac{a_{k-p,p}}{|k|}, \qquad \widetilde{A}_{k,p} := \frac{A_{k-p,p}}{|k|}.
\end{equation}
Then we obtain that
\begin{equation}\label{eqn:akpAkp-smallk}
 \widetilde{a}_{k,p} \rightarrow -\nabla g(p) \cdot \frac{k}{|k|}, \qquad  \widetilde{A}_{k,p} \rightarrow -\nabla \omega(p) \cdot \frac{k}{|k|}
\end{equation}
as $|k| \to 0$. We therefore introduce the rescaled variable $\tlambda = \lambda/|k|$ and define
\begin{equation}\label{def:Tdisp}
	\cD(\tlambda, k) := D(\tlambda|k|, k) =  1 -i \hw_1(k) \int_0^{\infty} e^{-\tlambda t} \int_{\RR^d} e^{-it \widetilde{A}_{k,p}} \widetilde{a}_{k,p} \, dp \, dt
\end{equation}
for $\Re \tlambda \ge 0$ and $k \neq 0 $, where we used \Cref{prop:disp-extension} and the change of variables $t \mapsto |k|t$.
The next lemma shows that $\cD(\tlambda, k)$ admits a continuous extension to $\{ \Re \tlambda \ge 0 \} \times \mathbb{R}^d$.

\begin{lemma}\label{lem:Tdisp-extension}
	Let $\gammaeq= f(-\Delta)$ be the equilibrium, and let $w_j$, $j=1,2$, be the interaction potentials. Assume that $f$ and $w_j$ satisfy \ref{equi:nonneg}--\ref{assumption:exchange}.
	There exists a continuous extension of $\cD(\tlambda, k)$ to $\{ \Re \tlambda \ge 0\} \times \mathbb{R}^d$, with
	\begin{align*}
		\cD(\tlambda, 0) 
		= 1 + 2 i \hw_1(0) \int_0^{\infty} e^{-\tlambda t} \int_{\RR^d} e^{2it \Omega'(|p|^2) e \cdot p}  f'(|p|^2) e \cdot p \, dp \, dt,
	\end{align*}
	for $\Re \tlambda \ge 0$ and $e \in \mathbb{S}^{d-1}$, where $\Omega(|p|^2) = \omega(p)$. The above definition is independent of $e$, and can be written as
	\[
	\cD(\tlambda, 0)
	= 1 + 2i \hw_1 (0) |\mathbb{S}^{d-2}| \int_0^{\infty} e^{-\tlambda t} \int_0^{\infty} r^d f'(r^2) \int_{-1}^{1} e^{2it\Omega'(r^2) rs} s(1-s^2)^{\frac{d-3}{2}} \, ds \, dr \, dt.
	\]
\end{lemma}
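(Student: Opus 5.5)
The plan is to show that the time integrand in \eqref{def:Tdisp},
\[
\widetilde J(t,k) := \int_{\RR^d} e^{-it\widetilde A_{k,p}}\, \widetilde a_{k,p}\, dp ,
\]
is bounded by an integrable function of $t$ uniformly in $k$ (at least for $|k|\le 1$). I will also show that $\widetilde J(t,k)$ converges pointwise as $k\to 0$ along each direction. Dominated convergence then gives continuity of $\cD(\tlambda,k)$ on $\{\Re\tlambda\ge 0\}\times\RR^d$, and the value at $k=0$ is the stated formula.

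\textbf{Step 1: uniform decay.} For $k\neq 0$ we have $\widetilde J(t,k) = |k|^{-1} J(t/|k|,k)$ with $J$ as in the proof of \Cref{prop:disp-extension}. Lemma \ref{lemAphase} with $N=2$ gives
\[
|J(s,k)| \lesssim \langle ks\rangle^{-2}\,\|a_{k-\cdot,\cdot}\|_{W^{2,1}} .
\]
By Lemma \ref{lem:akp}, $\|a_{k-\cdot,\cdot}\|_{W^{2,1}} \lesssim |k|\langle k\rangle^{-1}$, since the weights $\langle p\rangle^{-2n_1}$ and $\langle k-p\rangle^{-2n_1}$ are integrable. Therefore
\[
|\widetilde J(t,k)| \lesssim \langle k\rangle^{-1}\langle t\rangle^{-2}.
\]
This is integrable in $t$ uniformly in $k$, so $e^{-\tlambda t}\widetilde J(t,k)$ is dominated by $C\langle t\rangle^{-2}$ for $\Re\tlambda\ge0$. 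Together with the boundedness and continuity of $\hw_1$, this already yields continuity of $\cD$ on $\{\Re\tlambda\ge0\}\times(\RR^d\setminus\{0\})$. Continuity in $k$ of $\widetilde J(t,k)$ at fixed $t$ follows from dominated convergence in $p$.

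\textbf{Step 2: the limit $k\to0$.} For fixed $t$ and $p$, \eqref{eqn:akpAkp-smallk} gives
\[
\widetilde a_{k,p}\to -\nabla g(p)\cdot e, \qquad \widetilde A_{k,p}\to -\nabla\omega(p)\cdot e
\]
when $k/|k|\to e$. The domination in $p$ uses the mean value bound $|\widetilde a_{k,p}|\lesssim \sup_{\tau\in[0,1]}|\nabla g(p-\tau k)|\lesssim\langle p\rangle^{-2n_1-1}$ for $|k|\le1$.

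The subtle point is that the limit must not depend on the direction $e$, because $k\to0$ from arbitrary directions. I will use radiality: $\nabla g(p)=2f'(|p|^2)p$ and $\nabla\omega(p)=2\Omega'(|p|^2)p$, where $\omega$ is radial since $\hw_2$ and $g$ are radial. This gives the limit integrand
\[
-2\int e^{2it\Omega'(|p|^2)e\cdot p}\, f'(|p|^2)\, e\cdot p\, dp .
\]
A rotation $p\mapsto Rp$ with $Re'=e$ shows that this is independent of $e$. Passing to polar coordinates $p=r\theta$ and writing $s=e\cdot\theta$ gives the slice formula, via $d\theta = |\mathbb{S}^{d-2}|(1-s^2)^{(d-3)/2}\,ds$ (for $d\ge3$).

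Convergence along arbitrary sequences $k_n\to0$ then follows: if the limit failed along some sequence, pass to a subsequence with converging directions to get a contradiction. Dominated convergence in $t$, using Step 1, then gives joint continuity at $(\tlambda,0)$. The factor $\hw_1(k)\to\hw_1(0)$ is handled by the continuity of $\hw_1$, and the sign $-i\cdot(-2)=2i$ matches the claimed formula.

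\textbf{Main obstacle.} The hardest part is Step 1: the bound for $\widetilde J$ has to be uniform as $k\to0$, which requires the extra factor $|k|$ in Lemma \ref{lem:akp} to cancel the $|k|^{-1}$ coming from the rescaling. It also requires that the integration by parts in Lemma \ref{lemAphase} be uniform in $k$. That uniformity rests on \eqref{dispAkp}, where $|\partial_pA_{k-p,p}|\sim|k|$ and the higher derivatives are also $O(|k|)$, so their ratios stay bounded.
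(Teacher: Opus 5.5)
Your proposal is correct and follows essentially the same route as the paper. Both arguments obtain the uniform bound $|\cJ(t,k)|\lesssim \langle k\rangle^{-1}\langle t\rangle^{-2}$ by integrating by parts in $p$ (Lemmas \ref{lem:Akp} and \ref{lem:akp}), get pointwise convergence from \eqref{eqn:akpAkp-smallk} with a $p$-integrable majorant, use radiality of $g$ and $\omega$ to remove the dependence on $e$, and then pass to polar coordinates; your subsequence argument for arbitrary $k_n\to 0$, rather than only along rays $r\to 0$ with $e$ fixed as in the paper, is a small but welcome tightening of the joint-continuity claim at $k=0$.
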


\begin{proof}
	For $k \neq 0$, we define
	\begin{equation}\label{def:Tdisp-J}
		\cJ(t,k) = \int_{\RR^d} e^{-it \widetilde{A}_{k,p}} \widetilde{a}_{k,p} \, dp,
	\end{equation}
	where $\widetilde{A}_{k,p}$ and $\widetilde{a}_{k,p}$ are defined in \eqref{def:tilde-aA}. We observe from \eqref{def:Tdisp} that
	\begin{equation}\label{def:Tdisp-timeint}
		\cD(\tlambda, k) = 1 -i \hw_1(k) \int_0^{\infty} e^{-\tlambda t} \cJ(t,k) \, dt
	\end{equation}
	for $\Re \tlambda \ge 0$ and $k \neq 0$. 
	Write $k= re$ with $r=|k| >0$ and $e= k/|k| \in \mathbb{S}^{d-1}$, and set
	\[
		\cJ(t,r,e):= \cJ(t,re), \qquad \cD(\tlambda, r,e) := \cD(\tlambda, re).
	\]
	We claim that $\cD(\tlambda, r, e)$ extends continuously to $r=0$.
	
	By \eqref{eqn:akpAkp-smallk}, for each $t \ge 0$ and $p \in \RR^d$, we have
	\[
		\widetilde{A}_{re, p} \to -\nabla \omega(p) \cdot e, \qquad \widetilde{a}_{re, p} \to -\nabla g(p) \cdot e
	\]
	as $r \to 0$. Since $\widetilde{a}_{re, p}$ is rapidly decaying in $\min \{ \langle k-p \rangle, \langle p \rangle\}$, we obtain that
	\[
		\cJ(t,r,e) \to \cJ(t,0,e):= - \int_{\RR^d} e^{it\nabla \omega(p) \cdot e} \nabla g(p) \cdot e \, dp
	\]
	for each $t \ge 0$.
	
	Moreover, integrating by parts in $p$ and using \Cref{lem:Akp,lem:akp}, we obtain
	\[
		|\cJ(t,r,e)| \lesssim \langle k \rangle^{-1} \langle t \rangle^{-2}
	\]
	uniformly in $r \ge 0$ and $e \in \mathbb{S}^{d-1}$. Hence,
	\[
		\cD(\tlambda, r,e) \to \cD(\tlambda, 0, e):= 1- i\hw_1(0) \int_0^{\infty} e^{-\tlambda t} \cJ(t,0,e) \, dt
	\]
	as $r \to 0$, uniformly for $\Re \tlambda \ge 0$.
	Using $\nabla \omega(p) = 2 \Omega'(|p|^2)p$ and $\nabla g(p) = 2f'(|p|^2)p$, we obtain that
	\[
		\cD(\tlambda, 0, e)
		= 1 + 2i \hw_1 (0) \int_{0}^{\infty} e^{-\tlambda t}
			\int_{\RR^d} e^{2it\Omega'(|p|^2)e \cdot p} f'(|p|^2) e \cdot p \, dp \, dt.
	\]
	By rotational symmetry, the right hand side is independent of $e$. Hence, we may define $\cD(\tlambda, 0)$ by this expression. This gives an extension of $\cD(\tlambda ,k)$ to $\{ \Re \tlambda \ge 0 \} \times \mathbb{R}^d$.
	
	To derive the remaining formula, we choose $e=(1,0,\cdots, 0)$. Then
	\begin{align*}
		\cD(\tlambda, 0) 
		&= 1 + 2i \hw_1 (0) \int_0^{\infty} e^{-\tlambda t} \int_{\RR^d} e^{2it \Omega'(|p|^2) p_1} f'(|p|^2) p_1 \, dp \, dt \\
		&= 1 + 2i \hw_1 (0) \int_0^{\infty} e^{-\tlambda t} \int_{0}^{\infty} \int_{\mathbb{S}^{d-1}} e^{2it\Omega'(r^2) r\sigma_1} f'(r^2) r^d \sigma_1 \, d\sigma \, dr \, dt .
	\end{align*}
	Consider $\sigma = (s, \sqrt{1-s^2} \phi)$ where $s = \sigma_1 \in [-1,1]$ and $\phi \in \mathbb{S}^{d-2}$. Then we have
	\[
		d\sigma =(1-s^2)^{\frac{d-3}{2}} \, ds \, d\phi.
	\]
	Therefore,
	\[
		\cD(\tlambda, 0)
		= 1 + 2i \hw_1 (0) |\mathbb{S}^{d-2}| \int_0^{\infty} e^{-\tlambda t} \int_0^{\infty} r^d f'(r^2) \int_{-1}^{1} e^{2it\Omega'(r^2) rs} s(1-s^2)^{\frac{d-3}{2}} \, ds \, dr \, dt,
	\]
	which completes the proof of \Cref{lem:Tdisp-extension}.
\end{proof}

The extension of $\cD(\tlambda,k)$ has been established, so \Cref{thm:Penrose-Lindhard} is equivalent to proving
\[
	\inf_{k \in \mathbb{R}^d} \inf_{\Re \tlambda \ge 0} |\cD(\tlambda, k)| \ge c_0
\]
for some $c_0 >0$. In the remainder of this section, we work on $\cD(\tlambda ,k)$ instead of $D(\lambda, k)$. We first consider the case when $\Re \tlambda >0$ and establish that there are no zeros of $\cD(\tlambda, k)$ in $\{ \Re \tlambda >0 \} \times \mathbb{R}^d$. To accomplish this, we express $\cD(\tlambda, k)$ as a Cauchy-type transform on $\mathbb{R}$.
\begin{lemma}\label{lem:disp-hku}
	Let $\gammaeq= f(-\Delta)$ be the equilibrium, and let $w_j$, $j=1,2$, be the interaction potentials. Assume that $f$ and $w_j$ satisfy \ref{equi:nonneg}--\ref{assumption:exchange}. 
	
	(i) For $\Re \tlambda >0$ and $k \neq 0$, the dispersion relation $\cD(\tlambda,k)$ can be written as
	\begin{equation}\label{def:Tdisp-hku}
	\cD(\tlambda, k) = 1- i\hw_1(k) \int_{\RR} \frac{h_k(u)}{\tlambda + iu} \, du,
	\end{equation}
	where $h_k$ is an odd function on $\mathbb{R}$ defined by
	\begin{equation}\label{def:hku}
		h_k(u) = \int_{\{ p : \widetilde{A}_{k,p} = u \}} \frac{\widetilde{a}_{k,p}}{|\nabla_p \widetilde{A}_{k,p}|} \, d \sigma(p).
	\end{equation}
	Moreover, $h_k(u) < 0$ for $u > 0$.

	(ii) For $\Re \tlambda >0$ and $k=0$, we have
	\begin{equation}\label{def:Tdisp-h0u}
		\cD(\tlambda, 0) = 1- i \hw_1(0) \int_{\RR} \frac{h_0 (u)}{\tlambda +iu} \, du,
	\end{equation}
	where $h_0$ is defined by
	\begin{equation}\label{def:h0u}
		h_0 (u) = -\int_{ \{ p : -2\Omega'(|p|^2) p_1 = u \}} \frac{f'(|p|^2)p_1}{|\nabla_p(\Omega'(|p|^2)p_1)|}  \, d\sigma(p).
	\end{equation}
	Moreover, $h_0(u) <0$ for $u >0$.
\end{lemma}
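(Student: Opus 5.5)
The plan for (i) is the coarea formula applied to the time representation \eqref{def:Tdisp}. For $\Re\tlambda>0$, Fubini gives
\[
\int_0^\infty e^{-\tlambda t}\int_{\RR^d} e^{-it\widetilde A_{k,p}}\widetilde a_{k,p}\,dp\,dt=\int_{\RR^d}\frac{\widetilde a_{k,p}}{\tlambda+i\widetilde A_{k,p}}\,dp .
\]
This is justified because $\widetilde a_{k,p}\in L^1_p$ by \Cref{lem:akp} and $|e^{-\tlambda t}|$ is integrable. Next, \Cref{lem:Akp} shows $|\nabla_p\widetilde A_{k,p}|\ge\theta_0>0$, so the map $p\mapsto \widetilde A_{k,p}$ has no critical points and every level set is a smooth hypersurface. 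The coarea formula then rewrites the $p$-integral as $\int_\RR (\tlambda+iu)^{-1}h_k(u)\,du$, with $h_k$ as in \eqref{def:hku}. The same formula gives $\|h_k\|_{L^1}\lesssim\|\widetilde a_{k,\cdot}\|_{L^1}<\infty$, so the $u$-integral converges.

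For oddness, use the change of variables $p\mapsto k-p$. Since $g$ and $\omega$ are radial (hence even), $a_{k-(k-p),k-p}=g(p)-g(k-p)=-a_{k-p,p}$, and likewise for $A$. So the involution $p\mapsto k-p$ maps the level set $\{\widetilde A=u\}$ onto $\{\widetilde A=-u\}$. It preserves surface measure and $|\nabla_p\widetilde A|$, and it flips the sign of $\widetilde a$. Therefore $h_k(-u)=-h_k(u)$.

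The sign $h_k(u)<0$ for $u>0$ is the main point, and I expect it to be the main obstacle. Write $g=f(|\cdot|^2)$ and $\omega(p)=\Omega(|p|^2)$. The key claim is that $\Omega$ is strictly increasing. This holds because $\Omega'(e)=1+O(\epsilon_1)>0$, using the smallness in \ref{assumption:exchange}: $m$ is radial with $\|\nabla^2 m\|_{L^\infty}\ll1$ and $\nabla m(0)=0$, so $|\nabla m(p)|\ll|p|$. Then $\widetilde A_{k,p}=u>0$ means $\Omega(|k-p|^2)>\Omega(|p|^2)$, hence $|k-p|>|p|$. Positivity of $f$ alone does not give $\widetilde a_{k,p}<0$; it requires $f$ to be decreasing. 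Where $f$ is strictly decreasing, $g(k-p)<g(p)$, so $\widetilde a_{k,p}<0$ on the whole level set. I would first check whether monotonicity of $f$ is implicit in the hypotheses (as is standard for Fermi/Gibbs states, $f'<0$). If it is not, I would try to obtain the sign directly from the structure of the level sets, but I expect the decreasing property of $f$ is what is really needed, and I would state it explicitly if necessary. Strict negativity (rather than just $\le0$) also needs the level set to carry positive measure where $f'<0$; for $u$ in the range of $\widetilde A$ this follows from the gradient lower bound.

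Part (ii) follows the same scheme. By \Cref{lem:Tdisp-extension}, $\cD(\tlambda,0)$ equals the $e=e_1$ formula. After Fubini, the $p$-integral becomes
\[
\int \frac{-2f'(|p|^2)p_1}{\tlambda-2i\Omega'(|p|^2)p_1}\,dp .
\]
Apply the coarea formula with $\phi(p)=-2\Omega'(|p|^2)p_1$, where $\nabla\phi\neq0$ by the near-identity property of $\Omega$. Tracking the constant factor $2$ between $\nabla\phi$ and $\nabla(\Omega' p_1)$ yields \eqref{def:h0u}. For the sign: on $\{\phi=u>0\}$ we have $p_1<0$ because $\Omega'>0$. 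Then $f'\le0$ makes $f'(|p|^2)p_1\ge0$, so $h_0(u)<0$. This again uses the monotonicity of $f$, by the same mechanism as the $k$-level case.
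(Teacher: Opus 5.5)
Your proposal is correct and follows the paper's proof essentially step by step. Both use the level-set (coarea) decomposition of $\int \widetilde a_{k,p}(\tlambda+i\widetilde A_{k,p})^{-1}\,dp$, the involution $p\mapsto k-p$ for oddness, and $\Omega'\ge 0$ (which forces $|k-p|>|p|$, resp.\ $p_1<0$) together with $f'<0$ for the sign, with the same scheme in the variable $-2\Omega'(|p|^2)p_1$ at $k=0$. Your caveat is accurate: the paper also simply invokes $f'<0$, which is not among (H1)--(H4), and it asserts $\Omega'\ge 0$ without the justification from (H4) that you supply.
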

\begin{proof}
	First, we consider the case when $\Re \tlambda >0$ and $k \neq 0$. 
	Decomposing the integral with respect to the level sets of $\widetilde{A}_{k,p}$, we get
	\[
		\int_{\RR^d} \frac{\widetilde{a}_{k,p}}{\tlambda + i\widetilde{A}_{k,p}} \, dp 
		= \int_{\RR} \frac{1}{\tlambda + iu} \int_{\{ \widetilde{A}_{k,p} =u\}} \frac{\widetilde{a}_{k,p}}{|\nabla_p \widetilde{A}_{k,p}|} \, d\sigma(p) \, du = \int_{\RR} \frac{h_k(u)}{\tlambda + iu} \, du .
	\]
	It follows from \eqref{def:Tdisp} that
	\[
		\cD(\tlambda, k) = 1- i\hw_1 (k) \int_{\RR^d} \frac{\widetilde{a}_{k,p}}{\tlambda + i \widetilde{A}_{k,p}}  \, dp = 1 - i \hw_1(k) \int_{\RR} \frac{h_k(u)}{\tlambda + i u} \, du.
	\]
	Next, we show that $h_k(\cdot)$ is an odd function. Let $p'= k-p$. Since
	\[
		a_{k-p,p} = -a_{p,k-p}, \qquad A_{k-p,p} = -A_{p,k-p},
	\]
	we have
	\[
		\widetilde{a}_{k,p'} = - \widetilde{a}_{k,p}, \qquad \widetilde{A}_{k,p'} = -\widetilde{A}_{k,p}.
	\]
	Using the change of variables $p'=k-p$, it follows that
	\begin{align*}
		h_k(-u)
		= \int_{\{ \widetilde{A}_{k,p} = -u \}} \frac{\widetilde{a}_{k,p}}{|\nabla_p \widetilde{A}_{k,p}|} \, d \sigma(p)
		= - \int_{\{ \widetilde{A}_{k,p'} = u \}} \frac{\widetilde{a}_{k,p'}}{|\nabla_p \widetilde{A}_{k,p'}|} \, d \sigma(p')
		= -h_k(u).
	\end{align*}
	Finally, let $u>0$ and suppose $\widetilde{A}_{k,p}=u$. Then $A_{k-p,p} = |k|u >0$. Since $\Omega' \ge 0$ and
	\[
		A_{k-p,p} = \Omega(|k-p|^2) - \Omega(|p|^2),
	\]
	we obtain that $|k-p| > |p|$. As $f'<0$, this yields
	\[
		a_{k-p,p} = f(|k-p|^2) - f(|p|^2 ) < 0,
	\]
	which implies $\widetilde{a}_{k,p} < 0$. Hence, we conclude that $h_k(u)<0$ for $u>0$.
	
	We now consider the case when $\Re \tlambda >0$ and $k =0$. We may write the expressions in \Cref{lem:Tdisp-extension} as
	\begin{align}
		\cD(\tlambda, 0) 
		&= 1 +2i \hw_1(0) \int_{\RR^d} \frac{f'(|p|^2) p_1 }{\tlambda -2i \Omega'(|p|^2) p_1 } \, dp \label{def:Tdisp-zero1} \\
		&=1+2i\hw_1(0) |\mathbb{S}^{d-2}| \int_0^{\infty} r^d f'(r^2) \int_{-1}^{1} \frac{s(1-s^2)^{\frac{d-3}{2}}}{\tlambda-2ir\Omega'(r^2)s} \, ds \, dr. \label{def:Tdisp-zero2}
	\end{align}
	As before, we decompose the integral according to the level sets of $-2\Omega'(|p|^2) p_1$. 
	By the definition of $h_0$, we obtain
	\[
		-2 \int_{\RR^d} \frac{f'(|p|^2)p_1}{\tlambda -2i\Omega'(|p|^2)p_1} \, dp
		= \int_{\RR} \frac{h_0(u)}{\tlambda +iu} \, du.
	\]
	Therefore,
	\[
		\cD(\tlambda, 0)
		= 1- i \hw_1(0) \int_{\RR} \frac{h_0(u)}{\tlambda +iu} \, du,
	\]
	which proves \eqref{def:Tdisp-h0u}.
	Next, we show that $h_0(\cdot)$ is an odd function. 
	Write $p=(p_1, p_{\perp})$ and set $\tilde{p} = (-p_1, p_{\perp})$. Then
	\[
		-2 \Omega'(|\tilde{p}|^2) \tilde{p}_1 = 2 \Omega'(|p|^2)p_1, \qquad -2 f'(|\tilde{p}|^2) \tilde{p}_1 = 2f'(|p|^2) p_1.
	\]
	Hence, using the change of variables $p \mapsto \tilde{p}$, we obtain $h_0(-u) = - h_0 (u)$.
	Finally, let $u>0$ and suppose $-2\Omega'(|p|^2)p_1 =u$. Since $\Omega' \ge 0$, we have $p_1 < 0$. Noting that $f'(|p|^2) <0$, we obtain that $-2f'(|p|^2) p_1 <0$. Therefore, $h_0(u) <0$ for $u>0$.
\end{proof}

Now we can show that there are no growing modes of $\cD(\tlambda, k)$.
\begin{lemma}\label{lem:nonzero1}
	Let $\gammaeq= f(-\Delta)$ be the equilibrium, and let $w_j$, $j=1,2$, be the interaction potentials. Assume that $f$ and $w_j$ satisfy \ref{equi:nonneg}--\ref{assumption:exchange}.
	Then $\cD(\tlambda, k) \neq 0$ for $\Re \tlambda >0$ and $k \in \mathbb{R}^d$.
\end{lemma}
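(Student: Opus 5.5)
Since \Cref{lem:disp-hku} gives $\cD(\tlambda,k)=1-i\hw_1(k)\int_{\RR}\frac{h_k(u)}{\tlambda+iu}\,du$ for $\Re\tlambda>0$ and every $k\in\RR^d$ (with $h_0$ defined as in \eqref{def:h0u} at $k=0$), the plan is to use the oddness and sign of $h_k$ to show that the integral term is purely imaginary with a sign. The real part of $\cD$ is then strictly positive, so $\cD$ cannot vanish.

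Write $\tlambda=\gamma+i\tau$ with $\gamma>0$. Then
\[
\frac{1}{\tlambda+iu}=\frac{\gamma-i(\tau+u)}{\gamma^2+(\tau+u)^2},
\]
and therefore
\[
-i\int_{\RR}\frac{h_k(u)}{\tlambda+iu}\,du=-\int_{\RR}\frac{(\tau+u)h_k(u)}{\gamma^2+(\tau+u)^2}\,du-i\gamma\int_{\RR}\frac{h_k(u)}{\gamma^2+(\tau+u)^2}\,du.
\]
Since $\hw_1(k)\ge0$ is real, it suffices to show that the first (real) integral
\[
I:=\int_{\RR}\frac{(\tau+u)h_k(u)}{\gamma^2+(\tau+u)^2}\,du
\]
satisfies $I\le0$. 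Then $\Re\cD\ge1$. If the real part happens to be borderline, one can instead use the imaginary part together with the real part.

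To handle $I$, use oddness of $h_k$. Pair $u$ with $-u$ and symmetrize:
\[
I=\int_0^\infty h_k(u)\Big[\frac{\tau+u}{\gamma^2+(\tau+u)^2}-\frac{\tau-u}{\gamma^2+(\tau-u)^2}\Big]du.
\]
A short computation shows the bracket equals
\[
\frac{2u\big(\gamma^2-\tau^2+u^2\big)}{(\gamma^2+(\tau+u)^2)(\gamma^2+(\tau-u)^2)},
\]
and this is not sign-definite when $\tau\neq0$. So the real part alone only works for $\tau=0$. In that case the bracket is positive, $h_k<0$ gives $I<0$, and $\Re\cD>1$.

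For $\tau\neq0$, I will use the standard Penrose argument instead. Consider the imaginary part
\[
\Im\cD=-\gamma\hw_1(k)\int\frac{h_k(u)}{\gamma^2+(\tau+u)^2}\,du=-\gamma\hw_1(k)\int_0^\infty h_k(u)\Big[\frac{1}{\gamma^2+(\tau+u)^2}-\frac{1}{\gamma^2+(\tau-u)^2}\Big]du .
\]
The bracket has the sign of $-\tau u$, so $\Im\cD$ has the sign of $-\tau$ times a nonzero quantity, provided $\hw_1(k)>0$ and $h_k\not\equiv0$. Hence $\Im\cD\neq0$ when $\tau\ne0$. If $\hw_1(k)=0$ then $\cD=1$ directly. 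Nontriviality of $h_k$ follows from $f'<0$, which comes from \ref{equi:nonneg}--\ref{equi:reg} and the strict sign established in \Cref{lem:disp-hku}.

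The main obstacle I expect is justifying these manipulations: the symmetrization and the integrability of $h_k$. In particular, one must check that $h_k\in L^1(\RR)$ with enough decay, using the decay of $\widetilde a_{k,p}$ from \Cref{lem:akp} and the lower bound on $|\nabla_p\widetilde A_{k,p}|$ from \Cref{lem:Akp} (and the analogous bound at $k=0$, where $|\nabla_p(\Omega'(|p|^2)p_1)|\gtrsim1$ by smallness of $\hw_2$). With that in hand, the two cases $\tau=0$ and $\tau\neq0$ together give $\cD(\tlambda,k)\neq0$.
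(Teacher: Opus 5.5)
Your argument is correct, but you organize the Penrose argument differently from the paper. You split into two cases. For $\tau=\Im\tlambda=0$, the symmetrized kernel is $2u/(\gamma^2+u^2)>0$, hence $\Re\cD\ge1$. For $\tau\neq0$, the symmetrized $\Im\cD$ is nonzero with the sign of $-\tau$ whenever $\hw_1(k)>0$.

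The paper avoids both the case split and the symmetrization. Assuming $\cD(\tlambda,k)=0$, the vanishing of $\Im\cD$ gives $\int h_k(u)|\tlambda+iu|^{-2}\,du=0$. This removes the $\tau$-term from $\Re\cD=0$ and leaves $\int u\,h_k(u)|\tlambda+iu|^{-2}\,du=1/\hw_1(k)>0$, which contradicts $u h_k(u)\le0$. (The paper's display \eqref{eqn:nogrowing-kneq0-2} has a sign slip: its right side should be $+1/\hw_1(k)$.) Equivalently, in your notation,
\[
\Re\cD-\frac{\tau}{\gamma}\,\Im\cD=1-\hw_1(k)\int_{\RR}\frac{u\,h_k(u)}{\gamma^2+(\tau+u)^2}\,du\ \ge\ 1 .
\]
This is exactly the ``real and imaginary parts together'' combination you mention but do not carry out. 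It needs only the non-strict sign $uh_k(u)\le0$. Your route gives a bit more information (the sign of $\Im\cD$), at the cost of a case distinction and of using $h_k\not\equiv0$. That extra use is harmless, since $h_k\equiv0$ forces $\cD\equiv1$.

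Two small corrections:
\begin{itemize}
\item $f'<0$ does not follow from (H1)--(H2). You should simply invoke the sign and oddness of $h_k$ from \Cref{lem:disp-hku}, which is what the paper relies on.
\item The integrability issue you flag is not a real obstacle. Once the level-set decomposition is in place, $\|h_k\|_{L^1(\RR)}\le\|\widetilde a_{k,\cdot}\|_{L^1_p}$ and $|\tlambda+iu|^{-1}\le\gamma^{-1}$. So all integrals converge absolutely and the symmetrization is immediate, with no decay of $h_k$ needed.
\end{itemize}
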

\begin{proof}
	We first consider the case when $k \neq 0$. Suppose to the contrary that $\cD(\tlambda,k) =0$ for some $\tlambda$ with $\Re \tlambda >0$. Recalling \eqref{def:Tdisp-hku}, we observe that $\cD(\tlambda, k)=0$ is equivalent to
	\begin{equation}\label{eqn:nogrowing-kneq0}
		\int_{\RR} \frac{h_k(u)}{\tlambda + iu}  \, du = - \frac{i}{\hw_1(k)},
	\end{equation}
	where $h_k(\cdot)$ is defined in \eqref{def:hku}.
	We now use the decomposition
	\[
		\frac{1}{\tlambda + iu} = \frac{\Re \tlambda}{| \tlambda+iu |^2} - i \frac{ \Im \tlambda + u}{| \tlambda+iu |^2},
	\]
	to obtain the equivalent system of equations
	\[
		\int_{\RR} \frac{h_k(u)}{|\tlambda +iu|^2} \, du =0, \qquad \int_{\RR} \frac{(\Im \tlambda + u)h_k(u)}{|\tlambda + iu|^2} \, du = - \frac{1}{\hw_1(k)},
	\]
	by comparing the real and imaginary parts of \eqref{eqn:nogrowing-kneq0}.
	It follows that
	\begin{equation}\label{eqn:nogrowing-kneq0-2}
		\int_{\RR} \frac{u h_k(u)}{|\tlambda + iu|^2} \, du = -\frac{1}{\hw_1(k)}.
	\end{equation}
	Using \Cref{lem:disp-hku}, we have $uh_k(u) \le 0$ for $u \in \mathbb{R}$, which contradicts \eqref{eqn:nogrowing-kneq0-2}.
	
	Next, we consider the case when $k=0$. As before, we assume that $\cD(\tlambda,k)=0$ for some $\tlambda$ with $\Re \tlambda >0$, and aim for a contradiction. Recalling \eqref{def:Tdisp-zero2}, 
	$\cD(\tlambda, k)=0$ is equivalent to
	\begin{equation}\label{eqn:nogrowing-keq0}
		\int_0^{\infty} r^d f'(r^2) \int_{-1}^{1} \frac{s(1-s^2)^{\frac{d-3}{2}}}{\tlambda-2ir\Omega'(r^2)s} \, ds \, dr = \frac{i}{2 \hw_1(0) |\mathbb{S}^{d-2}|}.
	\end{equation}
	We have the decomposition
	\[
		\frac{1}{\tlambda-2ir\Omega'(r^2)s} = \frac{\Re \tlambda}{|\tlambda-2ir\Omega'(r^2)s|^2} + i \frac{2r\Omega'(r^2)s - \Im \tlambda}{|\tlambda-2ir\Omega'(r^2)s|^2},
	\]
	which yields the equivalent system
	\[
		\int_0^{\infty} r^d f'(r^2) \int_{-1}^{1} \frac{s(1-s^2)^{\frac{d-3}{2}}}{|\tlambda-2ir\Omega'(r^2)s|^2} \, ds \, dr =0,
	\]
	and
	\[
		\int_0^{\infty} r^d f'(r^2) \int_{-1}^{1} \frac{(2r\Omega'(r^2)s - \Im \tlambda)s(1-s^2)^{\frac{d-3}{2}}}{|\tlambda-2ir\Omega'(r^2)s|^2} \, ds \, dr = \frac{1}{2\hw_1(0)|\mathbb{S}^{d-2}|}.
	\]
	by evaluating the real and imaginary parts of \eqref{eqn:nogrowing-keq0}. It follows that
	\begin{equation}\label{eqn:nogrowing-keq0-2}
		\int_0^{\infty} 2r^{d+1} f'(r^2) \Omega'(r^2) \int_{-1}^{1} \frac{s^2(1-s^2)^{\frac{d-3}{2}}}{|\tlambda-2ir\Omega'(r^2)s|^2} \, ds \, dr = \frac{1}{2\hw_1(0)|\mathbb{S}^{d-2}|},
	\end{equation}
	which is a contradiction since $f'(r^2) \Omega'(r^2)  \le 0$. This ends the proof of \Cref{lem:nonzero1}.
\end{proof}

Next, we show that $\cD(\tlambda, k)$ does not vanish on $\{ \Re \tlambda =0\} \times \mathbb{R}^d$.
\begin{lemma}\label{lem:nonzero2}
	Let $\gammaeq= f(-\Delta)$ be the equilibrium, and let $w_j$, $j=1,2$, be the interaction potentials. Assume that $f$ and $w_j$ satisfy \ref{equi:nonneg}--\ref{assumption:exchange}. We have $\cD(i\ttau, k) \neq 0$ for all $\ttau \in \mathbb{R}$ and $k \in \mathbb{R}^d$.
\end{lemma}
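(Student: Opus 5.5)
Following Lemma \ref{lem:nonzero1}, I will represent $\cD(i\ttau,k)$ as the boundary value of the Cauchy-type integral in Lemma \ref{lem:disp-hku} and then use Plemelj's formula. Take $\tlambda = \varepsilon + i\ttau$ with $\varepsilon\to 0^+$. By the continuity of $\cD$ on $\{\Re\tlambda\ge0\}\times\RR^d$ (Lemma \ref{lem:Tdisp-extension}), $\cD(i\ttau,k)$ is the limit of $1 - i\hw_1(k)\int_\RR h_k(u)/(\varepsilon + i(\ttau+u))\,du$. Provided $h_k$ is Hölder continuous and integrable, this limit is
\[
\cD(i\ttau,k) = 1 - i\hw_1(k)\Big( \pi h_k(-\ttau) - i\,\mathrm{p.v.}\!\int_\RR \frac{h_k(u)}{\ttau+u}\,du\Big),
\]
so that
\[
\Re\cD(i\ttau,k) = 1 - \hw_1(k)\,\mathrm{p.v.}\!\int_\RR \frac{h_k(u)}{\ttau+u}\,du, \qquad \Im\cD(i\ttau,k) = -\pi\hw_1(k) h_k(-\ttau).
\]
The same formulas hold for $k=0$ with $h_0$. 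For the regularity of $h_k$, I would use the smooth level-set structure of $\widetilde A_{k,p}$. Lemma \ref{lem:Akp} gives $|\nabla_p \widetilde A_{k,p}|\ge\theta_0$, so the coarea formula applies, and the decay of $\widetilde a_{k,p}$ from Lemma \ref{lem:akp} shows that $h_k$ is $C^1$ and decays in $u$. An alternative, which avoids Plemelj altogether, is to work with the time representation \eqref{def:Tdisp-timeint} and the integrable bound $|\cJ(t,k)|\lesssim\langle t\rangle^{-2}$.

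Now suppose $\cD(i\ttau,k)=0$. I split into cases according to whether $\hw_1(k)=0$.

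\textbf{Case $\hw_1(k)=0$.} Then $\cD=1\neq0$ directly.

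\textbf{Case $\hw_1(k)>0$.} The imaginary part must vanish, so $h_k(-\ttau)=0$. By Lemma \ref{lem:disp-hku}, $h_k(u)<0$ for $u>0$, and by oddness $h_k(u)>0$ for $u<0$. Hence $h_k(-\ttau)=0$ forces $\ttau=0$. It remains to evaluate the real part at $\ttau=0$:
\[
\Re\cD(0,k) = 1 - \hw_1(k)\int_\RR \frac{h_k(u)}{u}\,du.
\]
Here the principal value is harmless, since $h_k(u)/u$ is integrable near $0$ because $h_k$ is odd and $C^1$. Moreover $h_k(u)/u\le 0$ for every $u$. Therefore $\Re\cD(0,k)\ge1$, a contradiction. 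The case $k=0$ is identical, using the sign property $h_0(u)<0$ for $u>0$ together with oddness.

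The main obstacle is justifying the Plemelj limit, which requires the Hölder continuity of $h_k$ uniformly enough to pass to the boundary. In particular, one needs that level sets do not degenerate and that the strict sign of $h_k$ holds on all of $u>0$. Strict negativity, which is what ensures $\Im\cD\ne0$ for $\ttau\ne0$, needs the level set $\{\widetilde A_{k,p}=u\}$ to be nonempty. I expect this to hold because $\widetilde A_{k,p}$ is unbounded along the direction $k$, by Lemma \ref{lem:Akp}, and because $f'<0$ strictly everywhere. Here I would invoke $f>0$ from \ref{equi:nonneg} together with monotonicity, as the paper does in Lemma \ref{lem:disp-hku}.

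Should Plemelj be awkward for $k$ near $0$, I would instead compute $\Re$ and $\Im$ of \eqref{def:Tdisp-zero1} at $\tlambda=\varepsilon+i\ttau$ directly and pass to the limit using the time-integral form.
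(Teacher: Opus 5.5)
Your argument is essentially the paper's proof. You apply Plemelj's formula to the Cauchy-type representation of Lemma \ref{lem:disp-hku}, use the imaginary part $-\pi\hw_1(k)h_k(-\ttau)$ to rule out $\ttau\neq0$, and use the sign of $h_k(u)/u$ to get $\Re\cD(0,k)\ge1$, with the same argument for $k=0$ using $h_0$. Two refinements over the paper's write-up are worth keeping: you treat the case $\hw_1(k)=0$ explicitly, and you flag the regularity of $h_k$ that the Plemelj limit requires. One correction: $f>0$ alone does not give monotonicity; the strict sign of $h_k$ rests on $f'<0$ and $\Omega'\ge0$, which Lemma \ref{lem:disp-hku} uses without stating them among the hypotheses.
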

\begin{proof}
	We first consider the case when $k\neq 0$. Recall from \eqref{def:Tdisp-hku} that
	\[
		\cD(\tlambda, k) = 1- i\hw_1(k) \int_{\RR} \frac{h_k(u)}{\tlambda +iu} \, du
	\]
	for $\Re \tlambda >0$. Noting that $h_k(u) \neq 0$ for all $u \neq 0$, we apply Plemelj's formula to obtain for $\ttau \in \mathbb{R}$ that
	\begin{align*}
		\cD(i\ttau, k)
		&= \lim_{\widetilde{\sigma} \to 0^{+}} \cD (\widetilde{\sigma} + i\ttau , k) \\
		&= 1- i\hw_1(k) \lim_{\widetilde{\sigma} \to 0^{+}} \int_{\RR} \frac{h_k(u)}{\widetilde{\sigma} + i(\ttau + u)} \, du \\
		&= 1 - \hw_1(k) \left[ \operatorname{PV}\int_{\RR} \frac{h_k(u)}{u + \ttau} \, du + i\pi h_k (-\ttau) \right].
	\end{align*}
	Therefore, we obtain a lower bound by looking at its imaginary part, namely,
	\[
		|\cD(i\ttau, k)| \ge \pi \hw_1(k) |h_k(-\ttau)|.
	\]
	From \Cref{lem:disp-hku}, we have $h_k(-\ttau)$ for $\ttau \neq 0$, which implies that $\cD(i\ttau, k) \neq 0$ for $\ttau \neq 0$. It remains to check at $\ttau =0$. We have
	\[
		\cD(0, k) = 1 - \hw_1(k) \operatorname{PV} \int_{\RR} \frac{h_k(u)}{u} \, du = 1- 2 \hw_1(k) \int_{0}^{\infty} \frac{h_k(u)}{u} \, du >1
	\]
	since $h_k(u) <0$ for $u>0$.
	
	Next, we consider the case when $k=0$. Applying Plemelj's formula for \eqref{def:Tdisp-h0u}, we obtain for $\ttau \in \RR$ that
	\begin{align*}
		\cD(i\ttau, 0)
		&= \lim_{\widetilde{\sigma} \to 0^{+}} \cD (\widetilde{\sigma} + i\ttau , 0) \\
		&= 1- i\hw_1(0) \lim_{\widetilde{\sigma} \to 0^{+}} \int_{\RR} \frac{h_0(u)}{\widetilde{\sigma} + i(\ttau + u)} \, du \\
		&= 1 - \hw_1(0) \left[ \operatorname{PV}\int_{\RR} \frac{h_0(u)}{u + \ttau} \, du + i\pi h_0 (-\ttau) \right].
	\end{align*}
	It follows from \Cref{lem:disp-hku} that
	\[
		|\cD(i\ttau, k) | \ge \pi \hw_1(0) |h_0(-\ttau)| >0
	\]
	for $\ttau \neq 0$. Meanwhile, we observe that
	\[
		\cD(0,0) = 1- \hw_1(0) \operatorname{PV} \int_{\RR} \frac{h_0(u)}{u} \, du = 1- 2 \hw_1(0) \int_0^{\infty} \frac{h_0(u)}{u} \, du >1,
	\]
	noting that $h_0(u) < 0$ for $u>0$. This concludes the proof of \Cref{lem:nonzero2}.
\end{proof}

Now we complete the proof of \Cref{thm:Penrose-Lindhard}.
\begin{proof}[Proof of \Cref{thm:Penrose-Lindhard}]
	We would like to prove
	\[
		\inf_{k \in \mathbb{R}^d} \inf_{ \Re \tlambda \ge 0} | \cD(\tlambda, k)|  \gtrsim 1.
	\]
For $\Re \tlambda \ge 0$, we recall from \eqref{def:Tdisp-J}, \eqref{def:Tdisp-timeint} and the proof of \Cref{lem:Tdisp-extension} that
\[
	|\cD(\tlambda, k) - 1| \le  |\hw_1(k)| \int_0^{\infty} |e^{-\tlambda t} \cJ(t,k)| \, dt \lesssim |\hw_1(k)| \langle k \rangle^{-1},
\]
which converges to zero as $|k| \to \infty$. Hence there exists $K>0$ such that $|\cD(\tlambda, k)| \ge 1/2$ uniformly for $|k| \ge K$ and $\Re \tlambda \ge 0$.

For $|k| \le K$, $\Re \tlambda \ge 0$ with $|\tlambda| \gg 1$, we integrate by parts in $t$ for two times to get
\begin{equation}\label{eqn:J-IBP-t}
	\int_0^{\infty} e^{-\tlambda t} \cJ(t,k) \, dt
	= \frac{1}{\tlambda^2} \partial_t \cJ(0,k) + \frac{1}{\tlambda^2} \int_0^{\infty} e^{-\tlambda t} \partial_t^2 \cJ(t,k) \, dt,
\end{equation}
noting that $\cJ(0,k) =0$. 
From the definition \eqref{def:Tdisp-J} of $\cJ(t,k)$, we compute for $k \neq 0$ that
\begin{align*}
	\partial_t \cJ(t,k) = -i \int_{\RR^d} e^{-it\widetilde{A}_{k,p}} \widetilde{A}_{k,p} \widetilde{a}_{k,p} \, dp, \qquad
	\partial_t^2 \cJ(t,k) = -\int_{\RR^d} e^{-it\widetilde{A}_{k,p}} \widetilde{A}_{k,p}^2 \widetilde{a}_{k,p} \, dp .
\end{align*}
Noting that $|k| \lesssim 1$, we obtain from \Cref{lem:Akp,lem:akp} that
\[
|\widetilde{A}_{k,p}| \lesssim  \langle p \rangle, \qquad |\partial_p \widetilde{A}_{k,p}| \lesssim 1, \qquad |\partial_p^2 \widetilde{A}_{k,p}| \lesssim 1, 
\]
and $|\partial_p^{\alpha} \widetilde{a}_{k,p} | \lesssim  \langle p \rangle^{-2n_1}$ for $|\alpha| \le 2$. Then
\begin{equation}\label{est:dtJ0}
	|\partial_t \cJ(0,k)| \lesssim  \| \widetilde{A}_{k,p} \widetilde{a}_{k,p} \|_{L^1_p} \lesssim 1
\end{equation}
uniformly for $k \neq 0$, provided that $n_1 >\frac{d+1}{2}$. Next,  integrating by parts in $p$ yields
\begin{equation}\label{est:d2tJt}
	|\partial_t^2 \cJ(t,k)|  \lesssim \langle t \rangle^{-2}  \| \widetilde{A}_{k,p}^2 \, \widetilde{a}_{k,p} \|_{W^{2,1}_p} \lesssim \langle t \rangle^{-2}
\end{equation}
uniformly for $k \neq 0$, since the contribution of the worst term is
\[
	 |\widetilde{A}_{k,p}^2 \, \partial_p^2 \widetilde{a}_{k,p}| \lesssim \langle p \rangle^{2-2n_1},
\]
from which we require $n_1 > \frac{d+2}{2}$ to ensure that $\| \widetilde{A}_{k,p} \widetilde{a}_{k,p}  \|_{W^{2,1}_{p}} \lesssim 1$. Combining \eqref{est:dtJ0} and \eqref{est:d2tJt}, we obtain from \eqref{eqn:J-IBP-t} that
\[
|\cD(\tlambda, k) -1 | \lesssim \frac{|\hw_1(k)|}{|\tlambda|^2} \left( |\partial_t \cJ(0,k)| + \int_0^{\infty} |\partial_t^2 \cJ(t,k)| \, dt \right)
\]
uniformly for $|k| \le K$ and $\Re \tlambda \ge 0$. Note that the case when $k=0$ follows from the continuity of $\cD(\tlambda ,k)$.
The right hand side converges to $0$ as $|\tlambda| \to \infty$. Hence there exists $\Lambda>0$ such that $|\cD(\tlambda, k)| \ge 1/2$ uniformly for $|k| \le K$, $\Re \tlambda \ge 0$, and $|\tlambda| \ge \Lambda$.

It remains to consider the region
\[
	\{ (\tlambda, k):  \Re \tlambda \ge 0, \ |\tlambda| \le \Lambda, \ |k| \le K \}.
\]
By compactness, it is enough to show $\cD(\tlambda, k) \neq 0$ for $\Re \tlambda \ge 0$, $k\in \mathbb{R}^d$. This is done in \Cref{lem:nonzero1,lem:nonzero2}.
\end{proof}

\subsection{Green function}

In this section, we shall derive pointwise bounds on the associated Green function to the linearized problem defined by inverting the Laplace transform
\begin{equation}\label{defGreenk}
	\widehat{G}_k(t) = \frac{1}{2\pi i} \int_{\{ \Re \lambda = \gamma_{0} \}} e^{\lambda t } \widetilde{G}(\lambda,k) \, d \lambda, \qquad \widetilde{G}(\lambda, k) := \frac{1}{D(\lambda, k)} 
\end{equation}
where $\gamma_{0}>0$, noting that $\widetilde{G}(\lambda, k)$ does not have any singularities in $\{ \Re \lambda > 0 \}$ for $k \neq 0$.
Using the regularity and decay assumptions of $f$, we obtain the decay of the Green function in Fourier space.
\begin{proposition}\label{prop:Green}
	Let $\gammaeq= f(-\Delta)$ be the equilibrium, and let $w_j$, $j=1,2$, be the interaction potentials. Assume that $f$ and $w_j$ satisfy \ref{equi:nonneg}--\ref{assumption:exchange}. Then the Green function $\widehat{G}(t,k)$ defined as in \eqref{defGreenk} can be decomposed into
	\begin{equation}\label{decomp:Green}
		\widehat{G}(t,k) = \delta(t) + \widehat{G}^r(t,k)
	\end{equation}
	where $\delta(t)$ is the Dirac delta distribution, and the regular part  $\widehat{G}^r(t,k)$ satisfies
	\begin{equation}
		\label{est:Green-Fourier}
		|\widehat{G}^r(t,k)|
		\le C_{n_0} |k| \langle kt \rangle^{-n_0 +3},
	\end{equation}
	for all $t \ge 0$, $k\in \mathbb{R}^d \setminus \{0\}$, and for some constant $C_{n_0}$ depending only on $n_0$.
\end{proposition}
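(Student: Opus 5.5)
Write $\widetilde G(\lambda,k) = 1 + \widetilde G^r(\lambda,k)$ with
\[
\widetilde G^r(\lambda,k)=\frac{1-D(\lambda,k)}{D(\lambda,k)} = \frac{i\hw_1(k)\,\widetilde{J}(\lambda,k)}{D(\lambda,k)},\qquad \widetilde J(\lambda,k)=\int_0^\infty e^{-\lambda t}J(t,k)\,dt,
\]
where $J(t,k)=\int e^{-itA_{k-p,p}}a_{k-p,p}\,dp$ as in the proof of \Cref{prop:disp-extension}. The inverse Laplace transform of the constant $1$ gives $\delta(t)$. It remains to bound $\widehat G^r(t,k)$, the inverse Laplace transform of $\widetilde G^r$, and for this I would shift the contour to $\Re\lambda=0$. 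This is legitimate by \Cref{prop:disp-extension}, by the continuity and decay of $\widetilde J$ derived below, and by \Cref{thm:Penrose-Lindhard}, which gives $|D|\ge c_0$ on $\{\Re\lambda\ge0\}$.

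\textbf{Step 1: time decay of the kernel.} Apply \Cref{lemAphase} together with \Cref{lem:akp} (valid for $|\alpha|\le n_0-1$) to get
\[
|J(t,k)|\lesssim \frac{|k|}{\langle k\rangle}\langle kt\rangle^{-(n_0-1)},
\]
using that $\|a_{k-p,p}\|_{W^{n_0-1,1}_p}\lesssim |k|/\langle k\rangle$ since $n_1>d$. The same argument gives $|t^jJ(t,k)|\lesssim |k|^{1-j}\langle kt\rangle^{-(n_0-1-j)}$. Consequently $\partial_\tau^j\widetilde J(i\tau,k)$ is bounded for $j\le n_0-3$, with $\int_0^\infty|t^jJ|\,dt\lesssim |k|^{-j}$. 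It is convenient to rescale as in \eqref{def:Tdisp}, setting $\lambda=|k|\tilde\lambda$. Then $\cJ(t,k)=|k|^{-1}J(t/|k|,k)$ satisfies $|t^j\cJ(t,k)|\lesssim\langle t\rangle^{-(n_0-1-j)}$ uniformly in $k$, so all $\tilde\tau$-derivatives of $\widetilde{\cJ}(i\tilde\tau,k)$ up to order $n_0-3$ are bounded uniformly in $k$.

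\textbf{Step 2: derivatives of the resolvent and decay at large $\tilde\tau$.} By the Faà di Bruno formula combined with $|\cD|\ge c_0$, the function $\widetilde{\mathcal G}^r(i\tilde\tau,k):=\widetilde G^r(i|k|\tilde\tau,k)$ has $\partial_{\tilde\tau}^j$ bounded for $j\le n_0-3$. For integrability in $\tilde\tau$, I would use the integration by parts in time from \eqref{eqn:J-IBP-t}. Since $\cJ(0,k)=0$, we get $\widetilde{\cJ}(i\tilde\tau,k)=O(\langle\tilde\tau\rangle^{-2})$, and similarly each $\partial_{\tilde\tau}^j\widetilde{\cJ}=O(\langle\tilde\tau\rangle^{-2})$, after noting that $t^j\cJ$ also vanishes at $t=0$ and that its $t$-derivatives are controlled as in \eqref{est:d2tJt}. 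For large $|k|$ this step should use the $\langle k\rangle^{-1}$ gain rather than the rescaling.

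\textbf{Step 3: integration by parts in $\tilde\tau$.} We have
\[
\widehat G^r(t,k)=\frac{|k|}{2\pi}\int_{\RR} e^{i\tilde\tau|k|t}\,\widetilde{\mathcal G}^r(i\tilde\tau,k)\,d\tilde\tau .
\]
For $|k|t\le1$, bound this directly by $|k|\,\|\widetilde{\mathcal G}^r\|_{L^1_{\tilde\tau}}\lesssim|k|$. For $|k|t\ge1$, integrate by parts $n_0-3$ times in $\tilde\tau$, which gains $(|k|t)^{-(n_0-3)}$ and leaves $\|\partial_{\tilde\tau}^{n_0-3}\widetilde{\mathcal G}^r\|_{L^1}\lesssim1$. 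Together these give \eqref{est:Green-Fourier}.

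\textbf{Main obstacle.} I expect the main obstacle to be Step 2: obtaining the $L^1_{\tilde\tau}$ integrability of the top-order derivative, uniformly in $k$ including $k\to0$ and $k\to\infty$. At top order, $t^{n_0-3}\cJ$ decays only like $\langle t\rangle^{-2}$, which is just enough for a bounded Laplace transform but not automatically enough for an integrable one. If this fails, I would first give up one more power: integrate by parts only $n_0-4$ times, and handle the final derivative via the $L^2$ bound from Plancherel combined with the $\langle\tilde\tau\rangle^{-2}$ decay. Alternatively, I would sacrifice the margin built into the loss ``$-n_0+3$'' and use the extra decay available from $n_0-1$ derivatives in \Cref{lem:akp}.
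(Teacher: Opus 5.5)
Your proposal is correct and follows essentially the paper's route: inversion on $\Re\lambda=0$, $n_0-3$ integrations by parts in the dual variable, $L^1$ integrability from two integrations by parts in $t$ using $\cJ(0,k)=0$, and the Penrose lower bound. Your rescaled variable $\tilde\tau=\tau/|k|$ is equivalent to the paper's Lorentzian bound $|\partial_\lambda^n\widetilde{G}^r(i\tau,k)|\lesssim |k|^{2-n}\langle k\rangle/(|k|^2\langle k\rangle^2+\tau^2)$, which after rescaling reads $\langle k\rangle/(\langle k\rangle^2+\tilde\tau^2)$.

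The obstacle you flag does not occur, but only if you integrate by parts $n_0-1$ times in $p$ rather than twice as in \eqref{est:d2tJt} (whose $\langle t\rangle^{-2}$ decay would not suffice). This gives $|\partial_t\cJ|\lesssim\langle t\rangle^{-n_0+1}$ and $|\partial_t^2\cJ|\lesssim\langle k\rangle\langle t\rangle^{-n_0+1}$, so $\partial_t^2(t^{n_0-3}\cJ)\in L^1_t$ with norm $\lesssim\langle k\rangle$. Combining the resulting $\langle k\rangle\tilde\tau^{-2}$ decay with the bound $\langle k\rangle^{-1}$, which you must keep in Step 1, gives uniform $L^1_{\tilde\tau}$ control, so no fallback is needed.
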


\begin{proof}
	By definition, we may write 
	\[
		\widetilde{G}(\lambda, k) = 1+ \frac{1- D(\lambda,k)}{D(\lambda, k)} 
		= 1 - \frac{\hw_1(k) m_f(\lambda, k)}{1+ \hw_1(k) m_f (\lambda, k)} =: 1 + \widetilde{G}^r(\lambda, k)
	\]
	for $\Re \lambda \ge 0$ and $k \neq 0$. Here we define
	\[
		m_f (\lambda, k) := -i \int_0^{\infty} e^{-\lambda t /|k|} \int_{\RR^d} e^{-it\widetilde{A}_{k,p}} \widetilde{a}_{k,p} \, dp \, dt
		= -i \int_0^{\infty} e^{-\lambda t /|k|} \cJ(t,k) \, dt,
	\]
	recalling \eqref{def:Tdisp-J}. 
	We claim that
	\begin{align}
		|\cJ(t,k)| &\lesssim \langle k \rangle^{-1} \langle t \rangle^{-n_0 +1},  \label{est:J-timedecay1}\\
		|\partial_t \cJ(t,k) | &\lesssim \langle t \rangle^{-n_0 +1}, \label{est:J-timedecay2}\\
		|\partial_t^2 \cJ(t,k)| &\lesssim \langle t \rangle^{-n_0 +1}, \label{est:J-timedecay3}
	\end{align}
	provided that $n_1 > \frac{d+2}{2}$.
	We first note that \Cref{lem:Akp} yields
	\[
		|\widetilde{A}_{k,p}| \lesssim  |k| + \langle p \rangle, \qquad |\partial_p \widetilde{A}_{k,p}| \sim 1, \qquad |\partial_p^{\alpha} \widetilde{A}_{k,p}| \lesssim 1
	\]
	for $1 \le |\alpha| \le 3n_0 -1$ and \Cref{lem:akp} gives
		\[
		|\partial_p^{\alpha} \widetilde{a}_{k,p}| \lesssim \langle k \rangle^{-1} \left( \langle k -p \rangle^{-2n_1} + \langle p \rangle^{-2n_1} \right)
		\]
	for $|\alpha| \le n_0 -1$.
	It follows that
	\[
		\| \widetilde{a}_{k,p} \|_{W^{n_0 -1, 1}_p} \lesssim \langle k \rangle^{-1}, \qquad \| \widetilde{A}_{k,p} \widetilde{a}_{k,p} \|_{W^{n_0 -1, 1}_p} \lesssim 1, \qquad \| \widetilde{A}_{k,p}^2 \widetilde{a}_{k,p} \|_{W^{n_0 -1, 1}_p} \lesssim  \langle k \rangle,
	\]
	where we use $n_1 > \frac{d+2}{2}$.
	A direct computation yields
	\begin{align*}
		\cJ(t,k) &= \int_{\RR^d} e^{-it\widetilde{A}_{k,p}} \widetilde{a}_{k,p} \, dp, \\
		\partial_t \cJ(t,k) &= -i \int_{\RR^d} e^{-it\widetilde{A}_{k,p}} \widetilde{A}_{k,p} \widetilde{a}_{k,p} \, dp, \\
		\partial_t^2 \cJ(t,k) &= \int_{\RR^d} e^{-it\widetilde{A}_{k,p}} \widetilde{A}_{k,p}^2 \widetilde{a}_{k,p} \, dp.
	\end{align*}
	Integrating by parts in $p$ for $n_0 -1$ times, we get
	\begin{align*}
		|\cJ(t,k)| 
		\lesssim  \left| \int_{\RR^d} e^{-it\widetilde{A}_{k,p}} \widetilde{a}_{k,p} \, dp \right|
		\lesssim \langle t \rangle^{-n_0 + 1} \| \widetilde{a}_{k,p} \|_{W^{n_0 -1 ,1}_p}
		\lesssim \langle k \rangle^{-1} \langle t \rangle^{-n_0 +1}.
	\end{align*}
	uniformly in $k$, which proves \eqref{est:J-timedecay1}. Similarly, integrating by parts in $p$ gives
	\[
		|\partial_t \cJ(t,k)| \lesssim \langle t \rangle^{-n_0 + 1} \| \widetilde{A}_{k,p} \widetilde{a}_{k,p} \|_{W^{n_0 -1, 1}_p} \lesssim \langle t \rangle^{-n_0 +1},
	\]
	and
	\[
		|\partial_t^2 \cJ(t,k)| \lesssim \langle t \rangle^{-n_0 +1} \| \widetilde{A}_{k,p}^2 \widetilde{a}_{k,p} \|_{W^{n_0 -1, 1}_p} \lesssim \langle k \rangle \langle t \rangle^{-n_0 +1},
	\]
	which prove \eqref{est:J-timedecay2} and \eqref{est:J-timedecay3}, respectively.
	
	We now estimate $m_f(\lambda, k)$. For $\Re \lambda \ge 0$, we obtain from \eqref{est:J-timedecay1} that
	\begin{equation*}
		|m_f(\lambda, k)| \lesssim \int_0^{\infty} |\cJ(t,k)| \, dt \lesssim \langle k \rangle^{-1} \int_0^{\infty} \langle t \rangle^{-n_0 +1} \, dt \lesssim \langle k \rangle^{-1},
	\end{equation*}
	noting that $n_0 >2$.
	On the other hand, for $\Re \lambda =0$, we write $\lambda = i\tau$ and integrate by parts twice in $t$ to get
	\[
		(|k|^2 \langle k \rangle^2 + \tau^2) m_f (i\tau, k)
		= |k|^2 \int_{\RR^d} \widetilde{A}_{k,p} \widetilde{a}_{k,p}\, dp -i |k|^2 \int_0^{\infty} e^{-i\tau t/|k|} (\langle k \rangle^2 - \partial_t^2) \cJ(t,k) \, dt,
	\]
	noting that the boundary terms at $t=\infty$ vanish from \eqref{est:J-timedecay1}, \eqref{est:J-timedecay2}, and we also used
	\[
		\cJ(0,k) =0 , \qquad \partial_t \cJ(0,k) = -i\int_{\RR^d} \widetilde{A}_{k,p} \widetilde{a}_{k,p}\, dp.
	\]
	Hence, for $\Re \lambda =0$, we have
	\[
		|m_f(\lambda, k)|
		\lesssim  \frac{|k|^2}{|k|^2 \langle k \rangle^2 + |\Im \lambda|^2} + |k|^2 \int_0^{\infty} \frac{\langle k \rangle}{|k|^2 \langle k \rangle^2 + |\Im \lambda|^2} \langle t \rangle^{-n_0 + 1} \, dt
		\lesssim \frac{|k|^2 \langle k \rangle}{|k|^2 \langle k \rangle^2 + |\Im \lambda|^2}.
	\]
	We apply the same argument to $\partial_\lambda^n m_f(\lambda, k)$ using $\partial_{\lambda}^n e^{-\lambda t/|k|} = (-t/|k|)^n e^{-\lambda t/|k|}$ for $ 0 < n <n_0-2$. 
	For $\Re \lambda \ge 0$, we obtain
	\[
		|\partial_\lambda^n m_f(\lambda, k)| \lesssim |k|^{-n} \int_0^{\infty} t^n |\cJ(t,k)| \, dt 
		\lesssim |k|^{-n} \langle k \rangle^{-1} \int_0^{\infty} t^n \langle t \rangle^{-n_0 +1} \, dt 
		\lesssim |k|^{-n} \langle k \rangle^{-1}.
	\]
	Similarly, for $\Re \lambda =0$, we get
	\begin{align*}
		|\partial_{\lambda}^n m_f(\lambda,k)|
		&\lesssim \frac{|k|^{2-n} }{|k|^2 \langle k \rangle^2 + |\Im \lambda|^2} \left( \langle k \rangle \int_0^{\infty} t^n |\cJ(t,k) | \, dt + \int_0^{\infty} |\partial_t^2 (t^n \cJ(t,k)) | \, dt \right)  \\
		&\lesssim \frac{|k|^{2-n} }{|k|^2 \langle k \rangle^2 + |\Im \lambda|^2} \left( \int_0^{\infty} t^n \langle t \rangle^{-n_0 +1} \, dt + \langle k \rangle \int_0^{\infty}t^n \langle t \rangle^{-n_0 +1} \, dt \right) \\
		&\lesssim  \frac{|k|^{2-n} \langle k \rangle}{|k|^2 \langle k \rangle^2 +|\Im \lambda|^2}.
	\end{align*}
	Using \Cref{thm:Penrose-Lindhard} and $|\hw_1 (k) |\lesssim 1$, we obtain that there exists a positive constant $C_{n_0}$, independent of $\lambda$, $k$, and $n$, satisfying
	\[
		|\partial_{\lambda}^n \widetilde{G}^r(\lambda,k)|
		\le \left| \sum_{j=0}^n \binom{n}{j} \partial_{\lambda}^j (1-D(\lambda,k)) \partial_{\lambda}^{n-j} (D(\lambda,k))^{-1} \right|
		\le \frac{C_{n_0} |k|^{2-n} \langle k \rangle}{|k|^2 \langle k \rangle^2 + |\Im \lambda|^2} 
	\]
	for $0 \le n < n_0 -2$ and $\Re \lambda \ge 0$. Therefore, upon integrating by parts, it follows that
	\[
		\widehat{G}^r(t,k) = \frac{1}{2\pi i}  \int_{\{\Re \lambda =0 \}} e^{\lambda t} \widetilde{G}^r (\lambda,k) \, d\lambda = \frac{1}{2\pi} \int_{\RR} e^{i\tau t}\widetilde{G}^r(i\tau,k) \, d\tau = \frac{(-1)^n}{2\pi t^n } \int_{\RR} e^{i\tau t} \partial_{\lambda}^n \widetilde{G}^r(i\tau,k) \, d\tau
	\]
	for all $ 0 \le n < n_0 -2$. We use $\int_{\RR} (a^2 + x^2)^{-1} \, dx = \pi |a|^{-1}$. For $n=0$, we obtain
	\[
		|\widehat{G}^r (t,k)| \le C_{n_0} \int_{\RR} \frac{|k|^2 \langle k \rangle}{|k|^2\langle k \rangle^2 + \tau^2} \, d \tau \lesssim |k| .
	\]
	For $n= n_0 -3$, we obtain
	\[
		|\widehat{G}^r (t,k) | \le \frac{C_{n_0}}{t^{n_0 -3}} \int_{\RR} \frac{|k|^{-n_0 + 5} \langle k \rangle}{|k|^2 \langle k \rangle^2 + \tau^2} \, d\tau
		\le C_{n_0} |k|  |kt|^{-n_0 +3}.
	\]
	Combining two bounds, we get \eqref{est:Green-Fourier}, which concludes the proof of \Cref{prop:Green}.
\end{proof}


\section{Nonlinear analysis}\label{sec-iter}
In this section, we return to the full nonlinear Hartree--Fock equation \eqref{eqn:perturb}, and prove the main results.

\subsection{Nonlinear iterative scheme}

Specifically, fix a small constant $\delta>0$ and any constants $\sigma>2d$ and $N>d$ so that $N \le n_0 -3$ and $\sigma \le 2n_1$, where $n_0,n_1$ are defined as in Assumption {\ref{equi:reg}}. 
For any $t\ge 0$, introduce the following iterative norm
\begin{equation}\label{key-iter}
\zeta(t) = \zeta_\rho(t) + \zeta_\mu(t) 
\end{equation}
where 
 \begin{equation}\label{defzeta}
\begin{aligned}
\zeta_\rho(t): &= \sup_{0\le s\le t} \Big\{  \langle s\rangle^{-\delta} \| \langle k \rangle^{\sigma} \langle ks \rangle^{N} \widehat{\rho}_\gamma(s,k) \|_{L^{\infty}_k} + \epsilon_1^{-1}  \langle s\rangle^{-\delta}\sum_{|\alpha| \le N} \| \langle k \rangle^{\sigma} \langle ks \rangle^{N} \partial_p^{\alpha} \widehat{\cX}_\gamma(s,k-p,p) \|_{L^{\infty}_{k,p}} \Big\}\\
\zeta_\mu(t): &=\sup_{0\le s\le t} \Big\{  \|\Fmu(s)\|_{\cH^{N-2}} +  \langle s\rangle^{-\delta} \|\Fmu(s)\|_{\cH^{N-1}_*} + \langle s\rangle^{-1-\delta} \|\Fmu(s)\|_{\cH^N_*}
	\Big\},
\end{aligned}
\end{equation}
where, for $m\ge 0$,  
\begin{equation}\label{defHnorm} 
\begin{aligned}
\|\Fmu(t)\|_{\cH^m} &= \sum_{|\alpha| \le m} \| \min \{ 1, |k|^{\delta}\}  \langle k\rangle^{\sigma+1} \min\{\langle k-p \rangle,\langle p \rangle\}^{\sigma}\partial_p^{\alpha} \Fmu(t,k-p,p) \|_{L^{\infty}_{k,p}},
\\
\|\Fmu(t)\|_{\cH^m_*} &= \sum_{|\alpha| \le m} \| \min \{ 1, |k|^{\delta}\}  \langle k\rangle^{\sigma} \min\{\langle k-p \rangle,\langle p \rangle\}^{\sigma}\partial_p^{\alpha} \Fmu(t,k-p,p) \|_{L^{\infty}_{k,p}}.
\end{aligned}\end{equation}
We note that the only difference between $\cH^m$ and $\cH^m_*$ lies in the weight $\langle k \rangle^{\sigma+1}$ versus $\langle k\rangle^\sigma$. In particular, we clearly have $\| \Fmu(t)\|_{\cH^m_*} \lesssim \| \Fmu(t)\|_{\cH^m}$. In addition, the exchange term $ \partial_p^{\alpha} \widehat{\cX}_\gamma(s,k-p,p)$ satisfies the same bounds as those for the density $\Frho(s,k)$, up to a small constant prefactor $\epsilon_1$. Observe that the boundedness of $\zeta(t)$ immediately yields uniform bounds on the density and the Hartree--Fock solution: precisely, each term in the above bracket is bounded by $\zeta(t)$. By the standard local existence theory, the iterative norm $\zeta(t)$ exists and is bounded by $C_0\epsilon $ for some sufficiently small time $t>0$. In order to propagate the iterative norm globally in time, it suffices to derive the following a priori bounds. 

\begin{proposition}\label{prop-iter} Introduce the iterative norm $\zeta(t)$ as in \eqref{key-iter}. As long as $\zeta(t)$ remains finite, there hold
\begin{equation}\label{apriori}
\begin{aligned}
\zeta(t) &\le C_0 \epsilon_0  + C_1 \epsilon_1 \zeta(t)+ C_1 \zeta(t)^2
\end{aligned}\end{equation}
for any $t\ge 0$ and for some universal constants $C_0, C_1$ that are independent of $t\ge 0$.
\end{proposition}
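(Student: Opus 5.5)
We will bound the density and exchange norms $\zeta_\rho$ first, and then the profile norms $\zeta_\mu$, using the Duhamel structure of \eqref{eqn:perturb}. We split the right side of \eqref{eqn:perturb} into three pieces. The first is the linear meanfield response $[w_1\star\rho_\gamma,\gammaeq]$, which we invert through Proposition~\ref{prop:resolvent-linear} and the Green function of Proposition~\ref{prop:Green}. The second is the linear exchange response $-[\cX_{w_2,\gamma},\gammaeq]$, which we treat as a forcing of size $\epsilon_1$. The third is the nonlinear commutator $[\cP_\gamma,\gamma]$, which we treat as a quadratic forcing.

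\textbf{Density and exchange.} By Proposition~\ref{prop:resolvent-linear}, $\Frho = \widehat{G}\star_t \widehat{S}$ with $\widehat{S}(t,k)=\int e^{-itA_{k-p,p}}\Fgamma_0(k-p,p)\,dp - i\int_0^t\int e^{-i(t-s)A_{k-p,p}}\widehat{F}(s,k-p,p)\,dp\,ds$. The forcing is $F = -[\cX_{w_2,\gamma},\gammaeq]+[\cP_\gamma,\gamma]$.
\begin{itemize}
\item For the initial-data term, Lemma~\ref{lemAphase} with $N$ derivatives gives $\langle k\rangle^{\sigma}\langle kt\rangle^{N}$ decay, bounded by $C_0\epsilon$.
\item The exchange-linear forcing has the form $a_{k-p,p}\widehat{\cX}_\gamma(s,k-p,p)$. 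Integrating by parts in $p$ via Lemmas~\ref{lem:Akp} and \ref{lem:akp}, and using the $\zeta_\rho$ control of $\partial_p^\alpha\widehat{\cX}_\gamma$, yields a bound of order $\epsilon_1\zeta$ times $\int_0^t\langle k(t-s)\rangle^{-N}\langle ks\rangle^{-N}\langle s\rangle^{\delta}\,ds$.
\item The convolution with $\widehat{G}^r$ from \eqref{est:Green-Fourier} preserves the $\langle kt\rangle^{-N}$ decay, since $N\le n_0-3$ and $|k|\int\langle k(t-s)\rangle^{-N}\langle ks\rangle^{-N}ds\lesssim\langle kt\rangle^{-N}$.
\end{itemize}

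The exchange $\widehat{\cX}_\gamma(t,k-p,p)$ is handled by exactly the same argument via Remark~\ref{rem-FcP}. By \eqref{compcXkp} it is a $p$-dependent average of $\Fgamma(t,k-\ell,\ell)$ against $\hw_2(\ell-p)$. Inserting the Duhamel formula for $\Fgamma$ (linear response plus forcing), every term carries a factor $\hw_2$, which produces the prefactor $\epsilon_1$ that is absorbed by $\epsilon_1^{-1}$ in $\zeta_\rho$. The $p$-derivatives fall on $\hw_2$, which is why we need $\hw_2\in W^{2n_0,\infty}$.

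\textbf{Profile.} For $\zeta_\mu$ we integrate \eqref{eqn:profile} in time. In the linear term $a_{k,p}e^{isA_{k,p}}\FcP_\gamma$, the phase $p$-derivatives produce factors $s\,\partial_pA_{k-p,p}\sim s|k|$, and these are absorbed by the $\langle ks\rangle^{N}$ decay of $\FcP_\gamma(s,k-p,p)$. This is why $\cH^{N-2}$ stays bounded while $\cH^{N-1}_*$ and $\cH^N_*$ are allowed to grow like $\langle s\rangle^{\delta}$ and $\langle s\rangle^{1+\delta}$. The factor $\min\{1,|k|^\delta\}$ handles the integrability at small $k$.

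\textbf{Main obstacle: nonlinear echo terms.} The hardest step is the nonlinear terms, in both the density estimate and the profile estimate. In the density, one must bound
$$\int_0^t\!\!\int\!\!\int e^{-i(t-s)A_{k-p,p}}\,\FcP_\gamma(s,k-p,\ell-k+p)\,\Fgamma(s,k-\ell,p)\,d\ell\,dp\,ds.$$
The plan is to write $\Fgamma = e^{-isA}\Fmu$ and to integrate by parts in $p$ along the combined phase. The combined group velocity is $(t-s)\partial_pA_{k-p,p}+s\,\partial_pA_{\ell-p,p}$ (suitably shifted). Away from the resonant set $t\,|k|\sim s\,|\ell|$, this is comparable to $\langle kt\rangle$, and the integration by parts gains $\langle kt\rangle^{-N}$. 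Near resonance, we trade derivatives. The $p$-dependence of the resonant set costs one power of $s$ per derivative. We pay for this with the $\cH^{N}_*$ norm, which grows like $\langle s\rangle^{1+\delta}$, together with the fast decay $\langle (\ell-k)s\rangle^{-N}$ of $\FcP_\gamma$ and the gain $\langle\ell-k\rangle^{-\sigma}$. Summing over time, using \eqref{minab} and $N>d$ for the integrability in $\ell$, gives $C_1\zeta^2$ with only a $\langle t\rangle^{\delta}$ loss.

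I expect the bookkeeping of this derivative loss near the momentum-dependent resonance to be the delicate point. If the direct bound fails, I would first try splitting $s\le t/2$ from $s\ge t/2$, and in the latter region put all the derivatives on $\FcP_\gamma$ rather than on the profile.
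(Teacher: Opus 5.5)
Your overall architecture matches the paper's. You use the Green-function representation \eqref{repdensity}, with the source split into initial-data, linear-exchange and nonlinear pieces. You treat the exchange kernel like the density, with the extra factor $\partial^\alpha\hw_2(q-p)$ supplying $\epsilon_1$. You obtain the profile bounds from Duhamel, with phase derivatives $\langle ks\rangle^{|\alpha'|}$ absorbed by the $\langle ks\rangle^{-N}$ decay of $\FcP_\gamma$.

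The gap is the step you yourself flag as the main obstacle: your treatment of the resonant region of the nonlinear density term would not close. On $\{|\partial_p\Phi|\le \theta_0|kt|/2\}$, with $\Phi=tA_{k-p,p}-sA_{k-p,\ell-k+p}$, integration by parts gains nothing. There $\partial_p\Phi$ may vanish, while $\partial_p^2\Phi$ is not controlled by it. So ``trading derivatives'' and paying with the $\langle s\rangle^{1+\delta}$ growth of $\cH^N_*$ only makes matters worse. Even bounding the resonant integrand in absolute value, after extracting $\langle kt\rangle^{-N}$, leaves a time integral of size $\int_0^t\langle s\rangle^{\delta}\,ds$. Indeed, for $s$ close to $t$ and $|k|\gtrsim1$, the resonant $\ell$-set contains a ball of radius $\sim|k|$ around $\ell=k$, on which the remaining weights are of order one. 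Your fallback (putting derivatives on $\FcP_\gamma$ for $s\ge t/2$) does not help: on the resonant set no integration by parts gains, whichever factor the derivatives land on. Also, $\FcP_\gamma(s,k-p,\ell-k+p)$ decays in $\langle \ell s\rangle$ (its total frequency is $\ell$), not in $\langle(\ell-k)s\rangle$. With your indexing, the resonance $t|k|\sim s|\ell|$ does not pair with the available decay.

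The paper's resonant-region argument rests on three ideas.
\begin{itemize}
\item[(i)] On the resonant set $|\ell s|\gtrsim\theta_0|kt|$. So the full factor $\langle kt\rangle^{-N}$ comes for free from the bootstrap bound $\langle\ell s\rangle^{-N}$ on $\FcP_\gamma$, and no integration by parts is needed for decay in $kt$.
\item[(ii)] Time-integrability is recovered by a \emph{single} integration by parts, which involves only $\langle\partial_p\rangle\Fmu$ and hence no time growth. One then proves $\int[\langle\partial_p\Phi\rangle^{-1}+s\langle\partial_p\Phi\rangle^{-2}](\cdots)\,d\ell\lesssim\langle s\rangle^{-1}$. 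This uses $\langle\partial_p\Phi\rangle\ge s|\eta-\tfrac{t}{s}\partial_pA_{k-p,p}|$ and the change of variables $\ell\mapsto\eta=\partial_pA_{k-p,\ell-k+p}$, a diffeomorphism with Jacobian $\sim 2$. Local integrability of $|\eta|^{-2}$ is exactly where $d\ge3$ enters.
\item[(iii)] The term with $|\partial_p^2\Phi|\lesssim|\ell s|\langle k\rangle^{-1}$ is absorbed by the extra $\langle k-\ell\rangle^{-1}$ supplied by the $\langle k\rangle^{\sigma+1}$ weight of $\cH^1$. This is why the low-order profile norms carry that weight. The sub-case $|k-\ell-p|\le|p|/2$ further requires splitting the weights as $\sigma/2+\sigma/2$, hence $\sigma>2d$.
\end{itemize}
The top-order norm $\cH^N_*$ is used only in the non-resonant region. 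There the $N$-fold integration by parts of Lemma~\ref{lem:IBP} is legitimate, because $|\ell s|\lesssim|\partial_p\Phi|$ gives \eqref{phase-cond}. The growth $\langle s\rangle^{1+2\delta}$ is then beaten by $\int\langle\ell s\rangle^{-N}(\cdots)\,d\ell\lesssim\langle s\rangle^{-d+2\delta}$.
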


Provided the validity of Proposition \ref{prop-iter}, we obtain the boundedness of $\zeta(t) \le 2C_0\epsilon_0$ for sufficiently small constants $\epsilon_0,\epsilon_1$, and therefore, the global existence theory follows via the standard continuous induction, see, e.g., \cite{NguyenYou2024, You2024} for a related scheme. It remains to establish Proposition \ref{prop-iter}. 

\subsection{Nonlinear estimates}

In this section, we prove the boostrap assumptions on the profile $\Fmu(t,k,p)$. Specifically, we obtain the following. 

\begin{proposition}\label{prop-profile} 
As long as $\zeta(t)$ remains finite, there hold 
\begin{equation}\label{estFmu}
\begin{aligned}
\|\Fmu(t)\|_{\cH^{|\alpha|}} &\le C_\alpha(t) \Big(C_0\epsilon_0 + C_1 \zeta_\rho(t)+ C_2\zeta(t)^2\Big), \qquad \mbox{if}\quad |\alpha|\le N-2,
\\
\|\Fmu(t)\|_{\cH^{|\alpha|}_*} &\le C_\alpha(t) \Big(C_0\epsilon_0 + C_1 \zeta_\rho(t)+ C_2\zeta(t)^2\Big), \qquad \mbox{if}\quad |\alpha| = N-1 \quad \mbox{or}\quad |\alpha|=N,
\end{aligned}
\end{equation}
for some universal constants $C_0,C_1,C_2$, where $\|\cdot\|_{\cH^{|\alpha|}}$ and $\|\cdot\|_{\cH^{|\alpha|}_*}$ norms are defined as in \eqref{defHnorm}, and 
\begin{equation}\label{defCalpha}
C_\alpha(t): = \left\{ 
\begin{aligned} 
1, \qquad & \mbox{if} \qquad |\alpha|\le N-2,
\\
\langle t\rangle^\delta, \qquad & \mbox{if} \qquad |\alpha| = N-1,
\\
\langle t\rangle^{1+\delta}, \qquad & \mbox{if} \qquad |\alpha| = N,
\end{aligned}\right.
\end{equation}
where $\delta>0$ is fixed as in \eqref{defzeta}.
\end{proposition}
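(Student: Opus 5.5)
We integrate the profile equation \eqref{eqn:profile} of Lemma \ref{lem-FHFmu} in time, evaluate it at $(k-p,p)$, and apply $\partial_p^\alpha$. This gives
\[
\Fmu(t,k-p,p) = \Fgamma_0(k-p,p) + i\int_0^t a_{k-p,p}e^{isA_{k-p,p}}\FcP_\gamma(s,k-p,p)\,ds + \mathcal{N}(t,k,p),
\]
where $\mathcal{N}$ collects the two bilinear integrals. We then take the weighted $L^\infty_{k,p}$ norms of \eqref{defHnorm} of each piece. The initial data term is bounded by $C_0\epsilon_0$ from the Fourier-side assumption on $\Fgamma_0$, since $\partial_p$ acting on $\Fgamma_0(k-p,p)$ is exactly $(\partial_p-\partial_k)$ acting on $\Fgamma_0$.

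For the linear term we use Remark \ref{rem-FcP}: $\FcP_\gamma(s,k-p,p)=\hw_1(k)\Frho_\gamma(s,k)-\widehat{\cX}_\gamma(s,k-p,p)$. The $\partial_p$ derivatives hit three factors.
\begin{itemize}
\item On $a_{k-p,p}$, Lemma \ref{lem:akp} gives the factor $|k|\langle k\rangle^{-1}\min\{\langle k-p\rangle,\langle p\rangle\}^{-2n_1}$. This supplies the $\min$-weight, since $\sigma\le 2n_1$, and the $|k|^\delta$ factor.
\item On $\widehat{\cX}_\gamma$, the $\zeta_\rho$ norm controls the result directly.
\item On the phase $e^{isA_{k-p,p}}$, each derivative produces $s\,\partial_pA_{k-p,p}=O(s|k|)$ by Lemma \ref{lem:Akp}.
\end{itemize}
Combined with the decay $\langle ks\rangle^{-N}$ and the growth $\langle s\rangle^{\delta}$ contained in $\zeta_\rho$, and after taking $\langle k\rangle$ weights, the $s$-integral is
\[
\int_0^t |k|\,\langle ks\rangle^{|\alpha|-N}\langle s\rangle^\delta\,ds.
\]
For $|\alpha|\le N-2$ this is bounded uniformly in $t$, with the $\langle s\rangle^\delta$ absorbed by slightly extra decay. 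For $|\alpha|=N-1$ it grows like $\log$, hence like $\langle t\rangle^\delta$. For $|\alpha|=N$ it grows like $|k|t\lesssim\langle k\rangle t$, and the resulting loss of one $\langle k\rangle$ is why the $\cH_*$ norm is used, with $C_\alpha(t)=\langle t\rangle^{1+\delta}$. The extra weight $\langle k\rangle^{\sigma+1}$ in $\cH^{m}$ is recovered from $a_{k-p,p}/\langle k\rangle$ together with the $\langle k\rangle^{\sigma}$ of $\zeta_\rho$. Altogether this gives the $C_1\zeta_\rho(t)$ term.

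For the bilinear terms we substitute $\FcP_\gamma(s,k,\ell-k)$, which in the shifted coordinates is again of the form in Remark \ref{rem-FcP} with total frequency $\ell$. It therefore obeys the bounds on $\Frho(s,\ell)$ and $\widehat\cX$ with decay $\langle \ell\rangle^{-\sigma}\langle \ell s\rangle^{-N}$. This factor multiplies $\Fmu(s,k-\ell,p)$, bounded in the $\cH$ norms. The $\ell$-integral converges by the inequality \eqref{minab} and $\sigma>2d$, using the $\min$-weights to recombine $\min\{\langle k-p\rangle,\langle p\rangle\}$. The $p$-derivatives distribute in three ways:
\begin{itemize}
\item onto $\Fmu$, where they are controlled by $\zeta_\mu$ at the same level (the top level costs $\langle s\rangle^{1+\delta}$, compensated by $\langle \ell s\rangle^{-N}$);
\item onto $\FcP$, controlled by $\zeta_\rho$;
\item onto the phases $e^{isA_{k,\ell-k}}$ and $e^{-isA_{p,\ell-p}}$.
\end{itemize}

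The main obstacle is the third case, the echo-type term where $\partial_p$ hits the phase $e^{-isA_{p,\ell-p}}$, together with the $(k-\ell)$-dependence of $\Fmu$. Here a derivative costs $s|\ell|$ (Lemma \ref{lem:Akp}), which we plan to absorb into $\langle \ell s\rangle^{-N}$ coming from the density or exchange factor; this is exactly why those factors carry phase-mixing decay in $\ell s$ rather than merely in $s$. The $s$-integral of
\[
\langle \ell s\rangle^{-N+j}\langle\ell\rangle^{-\sigma}\langle s\rangle^{\delta}\cdot(\text{growth of }\Fmu\text{-norm})
\]
then has to be checked level by level. At $|\alpha|=N$ with all derivatives on the phase, we have only $\langle\ell s\rangle^{0}$ left, and integrating in $\ell$ gives $\int_0^t\int \langle \ell\rangle^{-\sigma}|\ell|\,d\ell\,ds\lesssim t$. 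This is consistent with the allowed $\langle t\rangle^{1+\delta}$ growth, yielding the $C_2\zeta(t)^2$ contribution. Small-$\ell$ regions are handled with the $\min\{1,|k|^\delta\}$ weights and $|\ell|^\delta$ integrability in $d\ge3$. Summing all contributions gives \eqref{estFmu}.
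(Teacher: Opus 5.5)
Your proposal is correct and follows the paper's proof essentially step by step. The shared steps are: Duhamel's formula at $(k-p,p)$; the Leibniz expansion, with phase derivatives costing $\langle ks\rangle$ or $\langle \ell s\rangle$ and absorbed by the phase-mixing decay of $\Frho$ and $\widehat{\cX}_\gamma$ built into $\zeta_\rho$; level-by-level time integration producing $C_\alpha(t)$, with the $\cH_*$ norms absorbing the $\langle k\rangle$ loss at levels $N-1$ and $N$; and recombination of the $\min$-weights via \eqref{minab}. One bookkeeping correction: for $|\alpha|\le N-2$ the linear time integral $\int_0^t |k|\langle ks\rangle^{-2}\langle s\rangle^{\delta}\,ds$ is only bounded by $\max\{1,|k|^{-\delta}\}$, and it is the weight $\min\{1,|k|^{\delta}\}$ in the norm that absorbs this loss, since the factor $|k|$ from $a_{k-p,p}$ is used up entirely by the $s$-integration and cannot also supply $|k|^\delta$.
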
 

\begin{proof} 
Recall that by definition \eqref{defHnorm}, $\|\Fmu(t)\|_{\cH^m_*} \le \|\Fmu(t)\|_{\cH^m}$, and so the second estimate in \eqref{estFmu} concerns only the top derivatives $\partial_p^\alpha \Fmu$ with $|\alpha| = N-1$ or $|\alpha| = N$.
Integrating \eqref{eqn:profile} in time and evaluating the result at $(k-p,p)$, we obtain 
$$
\begin{aligned}
	\Fmu(t,k-p,p) 	&= \Fgamma_0 (k-p,p) -i \int_0^t e^{isA_{k-p,p}} a_{k-p,p}\widehat{\cP}_{\gamma}(s,k-p,p) \, ds \\
	&\quad +i (2\pi)^{-d} \int_0^t \int e^{isA_{k-p,\ell-k+p}} \widehat{\cP}_{\gamma}(s, k-p, \ell-k+p)
\widehat{\mu}(s,k-p-\ell, p)  \, d \ell \, ds\\
	&\quad -i (2\pi)^{-d} \int_0^t \int e^{-isA_{p,\ell-p}}  \widehat{\cP}_{\gamma}(s, \ell-p, p) \widehat{\mu}(s, k-p,p-\ell)  \, d\ell \, ds.
\end{aligned}$$	
For convenience, we write $$\Fmu(t,k-p,p)
= \Fmu^{(0)} + \Fmu^{(1)} + \Fmu^{(2)} +\Fmu^{(3)},$$
where $\Fmu^{(j)}$ denote the respective terms in the above expression for $\Fmu(t,k-p,p)$. Let us bound each term on the right. Indeed, by construction, for $|\alpha|\le N$, we bound 
$$
|\partial_p^\alpha \Fmu^{(0)}| \le |\partial_p^\alpha \Fgamma_0 (k-p,p)| \lesssim \epsilon_0 \langle k-p\rangle^{-\sigma_0}\langle p\rangle^{-\sigma_0} \lesssim \epsilon_0 \langle k\rangle^{-\sigma_0} \min\{\langle k-p \rangle,\langle p \rangle\}^{-\sigma_0}
$$
which yields $\|\Fmu^{(0)}\|_{\cH^N} \lesssim \epsilon_0$ as desired, provided $\sigma_0\ge \sigma+1$, upon recalling the {$\cH$-norm} in \eqref{defHnorm}. Next, we recall from \eqref{comFcPinv} that 
\begin{equation}\label{comFcPre}
\FcP_\gamma(t,k-p,p) = \hw(k) \widehat{\rho}_{\gamma}(t,k) - \widehat{\mathcal{X}}_{\gamma}(t,k-p,p).
\end{equation}
Observe that $p$-derivatives of $\FcP_\gamma(t,k-p,p) $ does not hit the space density $\Frho_\gamma(t,k)$ for $|\alpha|\ge 1$. In addition, 
in view of the boostrap assumptions in \eqref{defzeta}, the $p$-derivatives of the exchange term $\partial_p^\alpha\widehat{\cX}_\gamma (s,k-p,p)$ satisfy the same bounds as those for the meanfield term $\hw_1(k)\Frho(t,k)$ (noting $|\hw_1(\ell)|\lesssim 1$), up to a constant prefactor $\epsilon_1$. Namely, we have 
\begin{equation}\label{rhoXchange}
\begin{aligned}
|\partial_p^\alpha\widehat{\cP}_\gamma (s,\ell-p,p)|&\le \zeta_\rho(s)\langle \ell\rangle^{-\sigma} \langle \ell s\rangle^{-N}\langle s\rangle^{\delta},
\\
|\partial_p^\alpha\widehat{\cP}_\gamma (s,k-p,\ell-k+p)| 
&\le  \zeta_\rho(s)\langle \ell\rangle^{-\sigma} \langle \ell s\rangle^{-N}\langle s\rangle^{\delta},
\end{aligned}
\end{equation}
uniformly in $k,p$ and $s\ge 0$, for $|\alpha|\le N$. 

\subsubsection*{Bounds on $\Fmu^{(1)}$.}

By construction, for $|\alpha|\le N$, we compute 
$$
\begin{aligned}
\partial_p^\alpha \Fmu^{(1)} 
&= -\sum_{\alpha'+{\alpha''}+ \alpha'''= \alpha}\int_0^t \partial_p^{\alpha'} e^{isA_{k-p,p}} \partial_p^{\alpha''} a_{k-p,p} \partial_p^{\alpha'''}\widehat{\cP}_{\gamma}(s,k-p,p) \, ds.
\end{aligned}$$
Using $\partial_p A_{k-p,p} \sim k$ and $|\partial_p^{\beta} A_{k-p,p}| \lesssim |k|$ for $|\beta|\ge2$, see Lemma \ref{lem:Akp}, we bound 
$$ |\partial_p^{\alpha'} e^{isA_{k-p,p}} | \lesssim \langle ks\rangle^{|{\alpha'}|}.$$
Therefore, using the bootstrap assumptions \eqref{rhoXchange}, we obtain 
$$
\begin{aligned}
|\partial_p^\alpha \Fmu^{(1)}| 
&\lesssim \zeta_\rho(t) \sum_{{\alpha'}+{\alpha''} \le \alpha}\int_0^t \langle s\rangle^\delta \langle k \rangle^{-\sigma}\langle ks\rangle^{-N+|{\alpha'}|} 
|\partial_p^{\alpha''} a_{k-p,p} | \, ds.
\end{aligned}$$
Using \eqref{dervakp}, we get 
\begin{align*}
	|k|^{-1} |\partial_p^{\alpha''} a_{k-p,p}|
	&\lesssim \langle k \rangle^{-1} \left( \langle k-p \rangle^{-2n_1} + \langle p \rangle^{-2n_1} \right) \\
	&\lesssim \langle k \rangle^{-1} \min \{ \langle k-p \rangle, \langle p \rangle \}^{-2n_1},
\end{align*}
which gives the right decay in $k$ and $\min \{ \langle k -p \rangle, \langle p \rangle\}$, provided $2n_1 \ge \sigma$.
Hence, we only need to bound
\begin{equation}
	\begin{split}
		\int_0^t \langle s \rangle^{\delta} \langle ks \rangle^{-N + |\alpha'|} |k| \, ds	
		\lesssim 
		\begin{cases}
			\max \{ 1, |k|^{-\delta}\},	\quad				&\textrm{if} \quad |\alpha'| \le N-2, \\
			C_{\alpha'}(t) \langle k \rangle	, \quad &\mbox{if}\quad |\alpha'| = N-1 \quad \mbox{or}\quad |\alpha'|=N,		\end{cases}
	\end{split}
\end{equation}
with $C_\alpha(t)$ defined in \eqref{defCalpha}. When $|\alpha'| \le N-2$, using $\langle s \rangle^{\delta} \le \langle ks \rangle^{\delta} \max \{ 1, |k|^{-\delta}\}$, we bound
\begin{align*}
	\int_0^t \langle s \rangle^{\delta} \langle ks \rangle^{-2} |k| \, ds
	&\lesssim \max \{ 1, |k|^{-\delta}\} \int_0^t \langle ks \rangle^{-2+\delta} |k| \, ds
	\lesssim \max \{ 1, |k|^{-\delta}\},
\end{align*}
as desired. Similarly, when $|\alpha'| = N-1$, we bound
\begin{align*}
	\int_0^t \langle s \rangle^{\delta} \langle ks \rangle^{-2} |k| \, ds
	&\lesssim \max \{ 1, |k|^{-\delta}\} \int_0^t \langle ks \rangle^{-1+\delta} |k| \, ds 
	\\
&\lesssim \max \{ 1, |k|^{-\delta}\} \int_0^t s^{-1+\delta} |k|^\delta \, ds 
	\lesssim \langle t \rangle^{\delta}\langle k\rangle^{\delta}.
\end{align*}
When $|\alpha'|=N$, we bound
\begin{align*}
	\int_0^t \langle s \rangle^{\delta} |k| \, ds
	&\lesssim \langle t \rangle^{1+\delta} |k|.
\end{align*}
This proves that $\|\Fmu^{(1)}\|_{\cH^{|\alpha|}} \le C_\alpha(t) \zeta_\rho(t)$ as desired, for all $|\alpha|\le N$.

\subsubsection*{Bounds on $\Fmu^{(2)}$.}
Next, we bound the nonlinear contribution $\Fmu^{(2)}$. By construction, we compute 
\begin{equation}\label{defmu212}
\begin{aligned}
\partial_p^\alpha \Fmu^{(2)}
&= 
\sum_{{\alpha'} + {\alpha''} +\alpha'''= \alpha}\int_0^t \int \partial_p^{\alpha'} e^{isA_{k-p,\ell-k+p}} \partial_p^{\alpha''}\widehat{\mu}(s,k-p-\ell, p)\partial_p^{\alpha'''}\widehat{\cP}_{\gamma}(s, k-p, \ell-k+p) \; d\ell \, ds.
\end{aligned}
\end{equation}
By definition of the {$\cH$-norm} \eqref{defHnorm}, we shall prove that 
\begin{equation}\label{claimmu21}
\begin{aligned}
|\partial_p^\alpha \Fmu^{(2)}| 
\lesssim 
\begin{cases}
	C_{\alpha}(t) 
	\zeta(t)^2  \langle k \rangle^{-\sigma-1} 
	\min\{ \langle k-p\rangle, \langle p\rangle\}^{-\sigma},\quad					&\textrm{if} \quad |\alpha| \le N-2, \\
	C_{\alpha}(t)  \zeta(t)^2  \langle k \rangle^{-\sigma} 
	\min\{ \langle k-p\rangle, \langle p\rangle\}^{-\sigma},  &\textrm{if} \quad |\alpha| \ge N-1.
\end{cases}
\end{aligned}
\end{equation}
Let us first focus on $\partial_p^\alpha \Fmu^{(2)}$ with $|\alpha|\le N-2$. Note that $\partial_p A_{k-p,\ell-k+p} \sim |\ell|$ and $|\partial_p^{\beta} A_{k-p,\ell-k+p}| \lesssim |\ell|$ for $|\beta| \ge 2$. Hence, $|\partial_p^{\alpha'} e^{isA_{k-p,\ell-k+p}} |\lesssim \langle \ell s\rangle^{|{\alpha'}|}$. Therefore, using the bootstrap assumptions for $|\alpha''|\le N-1$, see \eqref{defzeta} and \eqref{rhoXchange}, we obtain 
\begin{equation}\label{bdFmu21}
\begin{aligned}
|\partial_p^\alpha \Fmu^{(2)}|
&\lesssim \zeta(t)^2
\sum_{{\alpha'} + {\alpha''} \le \alpha}\int_0^t \int C_{\alpha''}(s)\langle s\rangle^\delta \langle \ell\rangle^{-\sigma}\langle \ell s\rangle^{-N+|{\alpha'}|} 
\\&\qquad \times |k-\ell|^{-\delta}\langle k-\ell\rangle^{-\sigma-1+\delta}\min\{\langle k-p-\ell\rangle, \langle p\rangle\}^{-\sigma} \, d\ell \, ds.
\end{aligned}\end{equation}
Note that 
\begin{equation}\label{goodtriangle}
	\langle \ell\rangle^{-\sigma} \langle k-\ell\rangle^{-\sigma-1} \le 2^\sigma \langle \ell\rangle \langle k\rangle^{-\sigma-1} \min\{ \langle \ell\rangle, \langle k-\ell \rangle\}^{-\sigma},
\end{equation}
which in particular yields the right decay in $k$ as in \eqref{claimmu21} (noting an additional growth factor in $\ell$). In addition, we claim that
\begin{equation}\label{claimineqkp}
\min\{ \langle \ell\rangle, \langle k-\ell \rangle\}^{-\sigma}\min\{\langle k-p-\ell\rangle, \langle p\rangle\}^{-\sigma} \lesssim \min\{\langle k-p\rangle, \langle p\rangle\}^{-\sigma}B(k,p,\ell),
\end{equation}
with 
\begin{equation}\label{defBkpl}B(k,p,\ell) = \min\{ \langle \ell\rangle, \langle k-\ell \rangle,\langle k-p-\ell\rangle\}^{-\sigma} \end{equation}
which would then yield the right decay in $p$ as required in \eqref{claimmu21}. Note that $\int B(k,p,\ell) \; d\ell \lesssim 1$ uniformly in $k,p$. Indeed, if $\min\{\langle k-p-\ell\rangle, \langle p\rangle\}^{-\sigma}  = \langle p\rangle^{-\sigma}$, then 
 the claim \eqref{claimineqkp} clearly holds. 
On the other hand, if $\min\{\langle k-p-\ell\rangle, \langle p\rangle\}^{-\sigma}  = \langle k-p-\ell\rangle^{-\sigma}$, we bound
$$ \begin{aligned}
\langle \ell \rangle^{-\sigma}\langle k-p-\ell\rangle^{-\sigma} &\le 2^\sigma \langle k-p\rangle^{-\sigma}\min\{ \langle \ell \rangle, \langle k-p-\ell\rangle\}^{-\sigma},
\\
\langle k-\ell \rangle^{-\sigma}\langle k-p-\ell\rangle^{-\sigma} &\le 2^\sigma \langle p\rangle^{-\sigma}\min\{ \langle k-\ell \rangle, \langle k-p-\ell\rangle\}^{-\sigma}.
\end{aligned}$$
This proves the claim \eqref{claimineqkp}. 
Therefore, returning to \eqref{bdFmu21} and using $\langle s\rangle^\delta \le \langle \ell s\rangle^\delta |\ell|^{-\delta} \langle \ell \rangle^{\delta}$, it remains to bound 
\begin{equation}\label{claimCa}
\begin{aligned}
\int_0^t \int C_{\alpha''}(s)  |k-\ell|^{-\delta} \langle k-\ell \rangle^{\delta}|\ell|^{-\delta}\langle \ell \rangle^{1+\delta} \langle \ell s\rangle^{-N+|{\alpha'}|+\delta}B(k,p,\ell)\; d\ell \, ds \lesssim C_\alpha(t),
\end{aligned}
\end{equation}
for ${\alpha'} + {\alpha''} \le \alpha$. First, consider the case when $|{\alpha'}|\le N-2$. We bound $C_{\alpha''}(s) \le C_\alpha(t)$, since $|\alpha''|\le |\alpha|$, and integrate in $s$, yielding 
\begin{align*}
&\int_0^t \int C_{\alpha''}(s)  |k-\ell|^{-\delta} \langle k-\ell \rangle^{\delta}|\ell|^{-\delta}\langle \ell \rangle^{1+\delta} \langle \ell s\rangle^{-2+\delta}B(k,p,\ell)\; d\ell \, ds \\
&\lesssim  C_\alpha(t)\int |k-\ell|^{-\delta} \langle k-\ell \rangle^{\delta}|\ell|^{-1-\delta}\langle \ell\rangle^{1+\delta} B(k,p,\ell)\; d\ell \\
&\lesssim C_\alpha(t),
\end{align*}
as claimed in \eqref{claimCa}, upon noting $\|B(k,p,\ell)\|_{L^1_\ell \cap L^\infty_\ell} \lesssim 1$, see \eqref{defBkpl}. 

Next, we consider the case when $|\alpha| \ge N-1$.
In view of the definition of $\cH^{|\alpha|}_*$ in \eqref{defzeta}, we only need to propagate a weaker decay of order $\langle k\rangle^{-\sigma}$, instead of $\langle k\rangle^{-\sigma-1}$ as in \eqref{goodtriangle}. As a result, we do not lose any decay in $\ell$ in this case (i.e. no prefactor $\langle \ell \rangle$ is needed in \eqref{goodtriangle}).
Therefore, cf. \eqref{claimCa}, it suffices to bound 
\begin{equation}\label{claimCaN}
	\begin{aligned}
		\int_0^t \int C_{\alpha''}(s) |k-\ell|^{-\delta} \langle k-\ell \rangle^{\delta} |\ell|^{-\delta} \langle \ell \rangle^{\delta}\langle \ell s\rangle^{-N+|{\alpha'}|+\delta}B(k,p,\ell)\; d\ell \, ds \lesssim C_\alpha(t),
	\end{aligned}
\end{equation}
for $\alpha'+\alpha'' \le \alpha$ and $|\alpha| \ge N-1$. The case when $|\alpha'|\le N-2$ is already treated above.
In the case when $|{\alpha'}| = N-1$, we have $C_{\alpha''}(s) =1$, since $|{\alpha''}|\le |\alpha| - |{\alpha'}|\le N - |{\alpha'}| = 1$. We also note that $\int_0^t \langle \ell s \rangle^{-1+\delta} \, ds \lesssim |\ell|^{-1} \langle \ell t \rangle^{\delta} \lesssim |\ell|^{-1} \langle \ell \rangle^{\delta} \langle t \rangle^{\delta}$. Therefore, we bound 
$$
\begin{aligned}
&\int_0^t \int C_{\alpha''}(s) |k-\ell|^{-\delta} \langle k-\ell \rangle^{\delta} |\ell|^{-\delta} \langle \ell \rangle^{\delta}\langle \ell s\rangle^{-1+\delta}B(k,p,\ell)\; d\ell \, ds\\
&\lesssim 
\langle t\rangle^\delta\int |k-\ell|^{-\delta} \langle k-\ell \rangle^{\delta} |\ell|^{-1-\delta} \langle \ell \rangle^{2\delta}\langle \ell s\rangle^{-1+\delta}B(k,p,\ell) \; d\ell \lesssim  \langle t\rangle^\delta, 
\end{aligned}$$
which is bounded by $C_\alpha(t)$, since $|\alpha|\ge |{\alpha'}| = N-1$. 
In the case when $|\alpha'| = |\alpha| = N$, we note that $C_{\alpha''}(s) =1$ (because ${\alpha''}=0$) and $\int_0^t \langle \ell s\rangle^\delta \; ds \le \langle \ell\rangle^\delta \langle t\rangle^{1+\delta}$. The claim \eqref{claimCaN} thus follows. This yields the bounds on $\Fmu^{(2)}$ as claimed in \eqref{claimmu21}. 

\subsubsection*{Bounds on $\Fmu^{(3)}$.}

Finally, we bound the nonlinear contribution $\Fmu^{(3)}$. By construction, we compute 
$$\partial_p^\alpha \Fmu^{(3)}
= 
\sum_{{\alpha'} + {\alpha''}+\alpha''' = \alpha}\int_0^t \int \partial_p^{\alpha'} e^{-is A_{p,\ell-p}} \partial_p^{\alpha''}\widehat{\mu}(s,k-p,p-\ell) \partial_p^{\alpha'''}\widehat{\cP}_{\gamma}(s, \ell-p, p) \; d\ell \, ds.
$$
Note that $\partial_p A_{p,\ell-p} \sim |\ell|$ and $|\partial_p^{\beta} A_{p,\ell-p}| \lesssim |\ell|$ for $|\beta| \ge 2$ so $|\partial_p^{\alpha'} e^{-is A_{p,\ell-p}} |\lesssim \langle \ell s\rangle^{|{\alpha'}|}$. In addition, 
the bootstrap assumptions \eqref{defzeta} now read 
$$
\begin{aligned}
&|\partial_p^{\alpha''}\widehat{\mu}(s,k-p,p-\ell)|\\ &\quad \le 
\begin{cases}
	C_{\alpha''}(s) \zeta(s) |k-\ell|^{-\delta} \langle k-\ell\rangle^{-\sigma-1 + \delta} \min\{ \langle k-p\rangle, \langle p-\ell\rangle\}^{-\sigma},\quad					&\textrm{if} \quad |\alpha''| \le N-2 , \\
	C_{\alpha''}(s) \zeta(s) |k-\ell|^{-\delta} \langle k-\ell\rangle^{-\sigma+ \delta} \min\{ \langle k-p\rangle, \langle p-\ell\rangle\}^{-\sigma},  &\textrm{if} \quad |\alpha''| \ge N-1 .
\end{cases}
\end{aligned}
$$
Following the above proof for $\Fmu^{(2)}$, it therefore suffices to obtain a similar bound as in \eqref{claimineqkp}, namely 
\begin{equation}\label{claimineqkp2}
\min\{ \langle \ell\rangle, \langle k-\ell \rangle\}^{-\sigma}\min\{ \langle k-p\rangle, \langle p-\ell\rangle\}^{-\sigma} \lesssim \min\{\langle k-p\rangle, \langle p\rangle\}^{-\sigma}B'(k,p,\ell)
\end{equation}
with $B'(k,p,\ell) = \min\{ \langle \ell\rangle, \langle k-\ell \rangle,\langle p-\ell\rangle\}^{-\sigma} $. Indeed, if $\min\{ \langle k-p\rangle, \langle p-\ell\rangle\} = \langle k-p\rangle$, then the inequality is direct. On the other hand, if $\min\{ \langle k-p\rangle, \langle p-\ell\rangle\} = \langle p-\ell\rangle$, then we bound 
$$ \begin{aligned}
\langle \ell \rangle^{-\sigma}\langle p-\ell\rangle^{-\sigma} &\le 2^\sigma \langle p\rangle^{-\sigma}\min\{ \langle \ell \rangle, \langle p-\ell\rangle\}^{-\sigma} ,
\\
\langle k-\ell \rangle^{-\sigma}\langle p-\ell\rangle^{-\sigma} &\le 2^\sigma \langle k-p\rangle^{-\sigma}\min\{ \langle k-\ell \rangle, \langle p-\ell\rangle\}^{-\sigma} ,
\end{aligned}$$
which yields \eqref{claimineqkp2}. Note that by construction, it remains valid that $\|B'(k,p,\ell)\|_{L^1_\ell \cap L^\infty_\ell}\lesssim 1$. Therefore, the desired bounds on  $\Fmu^{(3)}$ follow similarly to those on  $\Fmu^{(2)}$, which we skip to repeat the details. 

This completes the proof of Proposition \ref{prop-profile}.
\end{proof}

\subsection{Density estimates}\label{sec:nonlinear-density}

In this section, we establish decay estimates for the density. Precisely, we obtain the following key proposition. 

\begin{proposition}\label{prop-density}
	Let $\rho(t,x) = \gamma(t,x,x)$ be the density generated by the nonlinear Hartree--Fock equation \eqref{eqn:perturb}, and let 
$\widehat{\rho}(t, k)$ be its Fourier transform. Then, as long as $\zeta(t)$ remains finite, there hold 
$$
 \langle t\rangle^{-\delta} \| \langle k \rangle^{\sigma} \langle kt \rangle^{N} \widehat{\rho}(t,k) \|_{L^{\infty}_k}\le C_0 \epsilon_0 + C_1\epsilon_1 \zeta_\rho(t)+ C_2 \zeta(t)^2$$
for some universal constants $C_0,C_1,C_2$. 
\end{proposition}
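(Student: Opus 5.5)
We treat the full nonlinear equation \eqref{eqn:perturb} as an instance of the linear problem \eqref{eqn:perturb-linear} with forcing
\[
F = -[\cX_{w_2,\gamma},\gammaeq] + [\cP_\gamma,\gamma],
\]
i.e. the exchange part of the linear response together with the full nonlinearity. Proposition \ref{prop:resolvent-linear} then gives $D(\lambda,k)\widetilde\rho(\lambda,k)=\widetilde S(\lambda,k)$. By Theorem \ref{thm:Penrose-Lindhard} and Proposition \ref{prop:Green}, inverting the Laplace transform yields the resolvent representation
\[
\Frho(t,k)=S(t,k)+\int_0^t \widehat G^r(t-s,k)\,S(s,k)\,ds .
\]
In physical time, $S$ reads
\[
S(t,k)=\int e^{-itA_{k-p,p}}\Fgamma_0(k-p,p)\,dp - i\int_0^t\!\int e^{-i(t-s)A_{k-p,p}}\widehat F(s,k-p,p)\,dp\,ds .
\]
Because $|\widehat G^r(t,k)|\lesssim |k|\langle kt\rangle^{-n_0+3}$ and $N\le n_0-3$, the Green-function convolution preserves bounds of the form $\langle k\rangle^{-\sigma}\langle kt\rangle^{-N}\langle t\rangle^{\delta}$. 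The reason is that $\int_0^t|k|\langle k(t-s)\rangle^{-N}\langle ks\rangle^{-N}\langle s\rangle^\delta\,ds\lesssim \langle kt\rangle^{-N}\langle t\rangle^\delta$ when $N>1$. It therefore suffices to bound $S(t,k)$ in the weighted norm $\langle t\rangle^{-\delta}\langle k\rangle^\sigma\langle kt\rangle^N$.

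\textbf{Source terms.} The initial-data term is bounded by $\epsilon_0\langle k\rangle^{-\sigma}\langle kt\rangle^{-N}$ via Lemma \ref{lemAphase}, using the assumed $W^{N,1}_p$ control of $\Fgamma_0(k-p,p)$ with weight $\langle k\rangle^{\sigma}$; this requires $\sigma_0>\sigma+d$. For the exchange linear response, the Fourier symbol of $[\cX_{w_2,\gamma},\gammaeq]$ at $(k-p,p)$ is $-a_{k-p,p}\widehat\cX_\gamma(s,k-p,p)$. Integrating by parts $N$ times in $p$ via Lemma \ref{lem:IBP} gains $\langle k(t-s)\rangle^{-N}$. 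We place derivatives on $a_{k-p,p}$, which carries a factor $|k|$ and $p$-decay by Lemma \ref{lem:akp}, and on $\partial_p^\alpha\widehat\cX_\gamma$, which is bounded by $\epsilon_1\zeta_\rho\langle k\rangle^{-\sigma}\langle ks\rangle^{-N}\langle s\rangle^\delta$. The same convolution-in-time estimate as above then gives the contribution $C_1\epsilon_1\zeta_\rho(t)$.

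\textbf{Nonlinear term.} Rather than using $\gamma$ directly, we write the nonlinear source through the profile $\mu$ as in \eqref{eqn:profile}:
\[
e^{-itA_{k-p,p}}\widehat F^{NL}(s,k-p,p)=e^{-i(t-s)A_{k-p,p}}\Big(\text{integrand of }\Fmu^{(2)}+\Fmu^{(3)}\Big).
\]
For the $\Fmu^{(2)}$-type term, the phase combines as
\[
-(t-s)A_{k-p,p}+sA_{k-p,\ell-k+p}-sA_{k-p,p}=-tA_{k-p,p}+sA_{p,\ell-k+p},
\]
with $\ell$ the frequency carried by $\widehat\cP$. Integrating by parts in $p$ gains $\langle kt\rangle^{-N}$ only if $\partial_p$ of the total phase stays comparable to $|k|t$. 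The danger is the echo region, where $t\,\partial_pA_{k-p,p}\approx s\,\partial_pA_{\ldots}$ with the second group velocity of size $|\ell|$. We split the $s$-integral. Where $|kt|\gtrsim|\ell s|$, we integrate by parts using the full phase; its $p$-gradient is $\gtrsim|k|t$ thanks to Lemma \ref{lem:Akp} and smallness of $\nabla^2 m$. In the echo region $|\ell s|\gtrsim |kt|$, we do not integrate by parts and instead use the decay $\langle\ell s\rangle^{-N}$ of $\widehat\cP$ from \eqref{rhoXchange}, which already dominates $\langle kt\rangle^{-N}$. Derivatives falling on $\Fmu$ are controlled by $\zeta_\mu$, and the top-order growth $\langle s\rangle^{1+\delta}$ in $\cH^N_*$ is paid for by the extra $\langle \ell s\rangle^{-1}$ available in the nonresonant region. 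The $p$-weights are handled exactly as in \eqref{claimineqkp} and \eqref{claimineqkp2}, so the $\ell$- and $p$-integrals converge for $\sigma>2d$. The $\Fmu^{(3)}$ term is treated symmetrically. This gives the $C_2\zeta(t)^2$ bound.

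\textbf{Main obstacle.} The hard step is the nonlinear echo analysis. There the total phase gradient depends on $p$ through $\nabla\omega$, and the number of derivatives available on $\Fmu$ is tight: $N$ with a $\langle s\rangle^{1+\delta}$ loss at top order. I would first try a smooth partition $\chi(|\ell s|/|kt|)$, check that each $p$-derivative of the cutoff-weighted integrand costs at most $\langle kt\rangle$, and fall back to $N-1$ integrations by parts plus a $\langle t\rangle^\delta$ loss where needed.
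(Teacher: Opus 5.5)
Your resolvent reduction to $\FS$ (as in Lemma \ref{lem-reduceS}), the data and exchange-response terms, and the region $|\ell s|\le c|kt|$ with $c<\theta_0/\theta_1$ all match the paper. In that last region $|\partial_p\Phi|\gtrsim|kt|$, $|\partial_p^\alpha\Phi|\lesssim|kt|$, and Lemma \ref{lem:IBP} applies. Two small corrections there. First, the combined phase is $-\Phi$ with $\Phi=tA_{k-p,p}-sA_{k-p,\ell-k+p}$ as in \eqref{defPhi}, not with $A_{p,\ell-k+p}$. Second, the top-order loss $\langle s\rangle^{1+\delta}$ from $\cH^N_*$ is not paid for by ``an extra $\langle\ell s\rangle^{-1}$'', which after the $\ell$-integral gives at most $\langle s\rangle^{-1}$. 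It is paid for by $\int\langle\ell s\rangle^{-N}(\cdots)\,d\ell\lesssim\langle s\rangle^{-d+2\delta}$, which is integrable against $\langle s\rangle^{1+2\delta}$ because $d\ge3$; see \eqref{claim:NR}.

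The genuine gap is the echo region $\{|\ell s|\ge c|kt|\}$. There you bound the integrand in absolute value and use $\langle\ell s\rangle^{-N}\lesssim\langle kt\rangle^{-N}$. This spends all of the $\ell s$-decay on the $kt$-weight and leaves nothing to make the $s$-integral converge. Concretely, take $|k|\ge1$, $s\in[(1-\epsilon)t,t]$ and $|\ell-k|\le\epsilon$ with $\epsilon$ small; this set lies in your echo region for any admissible $c<1$. On it, the bootstrap bounds give nothing better than an integrand of size $\zeta^2\langle s\rangle^{\delta}\langle k\rangle^{-\sigma}\langle kt\rangle^{-N}$ times a bounded $(p,\ell)$-integral. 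Your estimate therefore yields $t^{1+\delta}\langle k\rangle^{-\sigma}\langle kt\rangle^{-N}$, a full factor $t$ worse than the target, so $\zeta_\rho$ cannot be closed.

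The missing idea is that the oscillation must still be used once in the resonant region.
\begin{itemize}
\item The paper integrates by parts a single time in $p$ with the operator $L_p$ of \eqref{recallL}. Because $L_p$ is built on $1+|\nabla_p\Phi|^2$, it is harmless where $\nabla_p\Phi=0$.
\item This gains $\langle\partial_p\Phi\rangle^{-1}+|\partial_p^2\Phi|\langle\partial_p\Phi\rangle^{-2}$, with $|\partial_p^2\Phi|\lesssim|\ell s|\langle k\rangle^{-1}$. It costs only one derivative on $\Fmu$, so there is no time growth.
\item It then proves $\FI(s,k)\lesssim\langle s\rangle^{-1}$ in \eqref{claimrhoH1R}. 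The change of variables $\ell\mapsto\eta=\partial_pA_{k-p,\ell-k+p}$ is a diffeomorphism with Jacobian of order $2$. It gives $\langle\partial_p\Phi\rangle\ge s|\eta-\tfrac{t}{s}\partial_pA_{k-p,p}|$, and $|\eta-B|^{-1}$, $|\eta-B|^{-2}$ are locally integrable precisely because $d\ge3$.
\item The $p$-weights are handled by splitting into $|k-\ell-p|\ge|p|/2$ and $|k-\ell-p|\le|p|/2$. The second case is where $\sigma>2d$ is needed.
\end{itemize}
This produces $\int_0^t\langle s\rangle^{-1+\delta}\,ds\lesssim\langle t\rangle^{\delta}$. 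Your fallback of fewer integrations by parts with a $\langle t\rangle^\delta$ loss is about derivative counting and does not supply this time integrability.
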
 

The proof of Proposition \ref{prop-density} uses the linear theory developed in the previous section, and the following representation of the nonlinear density.  

\begin{proposition}\label{prop:resolvent-nonlinear}
	Let 
$\Frho(t, k)$ be the Fourier transform of the Hartree--Fock density. Then, there holds 
\begin{equation}\label{repdensity}
\Frho(t,k) = \widehat{S}(t,k) + \int_0^t \FG^r(t-s, k) \widehat{S}(s,k)  \, ds.
\end{equation}
where $\FG^r(t, k)$ denotes the Green function constructed as in Proposition \ref{prop:Green}, and the source term $\FS(t,k)$ is computed by 
\begin{equation}\label{defSkt}
	\FS(t,k) = \int e^{-itA_{k-p,p}} \hgamma_0 (k-p,p) \, dp -i  \int_0^t \int e^{-i(t-s)A_{k-p,p}} \FF(s,k-p,p) \, dp \, ds,
\end{equation}
where $\hgamma_0(k,p)$ is the Hartree--Fock initial data, and 
\begin{equation}\label{defFF}	
\begin{aligned}
		\FF(t,k,p) &= a_{k,p} \widehat{\mathcal{X}}_{\gamma}(t,k,p) 
		&+ \frac{1}{(2\pi)^d}\int \left(\widehat{\mathcal{P}}_{\gamma}(t,k,\eta) \widehat{\gamma}(t,-\eta, p) -  \widehat{\gamma}(t,k,\eta) \widehat{\mathcal{P}}_{\gamma}(t,-\eta, p) \right)  d\eta.
	\end{aligned}\end{equation}
\end{proposition}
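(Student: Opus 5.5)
The plan is to recast the full nonlinear equation \eqref{eqn:perturb} in the form of the linear inhomogeneous problem \eqref{eqn:perturb-linear}, with forcing chosen to absorb everything except the meanfield linear response. Then we apply \Cref{prop:resolvent-linear} and invert $D(\lambda,k)$ using the Green function of \Cref{prop:Green}.

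First, split $\cP_\gamma = w_1\star_x\rho_\gamma - \cX_{w_2,\gamma}$ in the linear term $[\cP_\gamma,\gammaeq]$. Only $[w_1\star_x\rho_\gamma,\gammaeq]$ is kept on the linear side, while $-[\cX_{w_2,\gamma},\gammaeq]$ and the full nonlinearity $[\cP_\gamma,\gamma]$ are moved into the forcing $F$. Using \eqref{Fequilibria} and \eqref{FcommAB} as in \Cref{lem-FHF}, the Fourier transform of $-[\cX_{w_2,\gamma},\gammaeq]$ equals $a_{k,p}\widehat{\cX}_\gamma(t,k,p)$. We will check the sign and the $(2\pi)^{-d}$ normalizations against \eqref{eqn:fourier}, where $-a_{k,p}\FcP_\gamma$ contains $+a_{k,p}\widehat\cX_\gamma$. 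The nonlinear commutator gives the $\eta$-integral in \eqref{defFF}. Hence $\gamma$ solves \eqref{eqn:perturb-linear} with $\widehat F = \FF$.

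Second, we invoke \Cref{prop:resolvent-linear} to get $D(\lambda,k)\widetilde\rho(\lambda,k) = \widetilde S(\lambda,k)$ for $\Re\lambda>0$. Here $\widetilde S$ is given by \eqref{def:cS}. Next we identify $\widetilde S$ as the Laplace transform of $\FS(t,k)$ in \eqref{defSkt}. Indeed, $(\lambda+iA_{k-p,p})^{-1}$ is the Laplace transform of $e^{-itA_{k-p,p}}$. Therefore the second term in \eqref{def:cS} is the transform of the time convolution $-i\int_0^t e^{-i(t-s)A_{k-p,p}}\FF(s,k-p,p)\,ds$, integrated in $p$. Fubini is justified by the bootstrap bounds encoded in $\zeta(t)$, which give integrability in $p$. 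By causality, only values of $\FF$ on $[0,t]$ matter, so we may truncate the forcing beyond the time of interest.

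Third, for $k\neq0$, \Cref{thm:Penrose-Lindhard} gives $|D|\ge c_0$ on $\Re\lambda\ge0$, so $\widetilde\rho = \widetilde G\,\widetilde S$ with $\widetilde G = 1/D = 1+\widetilde G^r$. By the decomposition \eqref{decomp:Green} and the convolution theorem for the Laplace transform, inverting gives
$\Frho(t,k) = \FS(t,k) + \int_0^t \FG^r(t-s,k)\FS(s,k)\,ds$.
The inversion is legitimate because $\FG^r(\cdot,k)$ is integrable in $t$ by \eqref{est:Green-Fourier} ($n_0-3>1$), and $\FS(\cdot,k)$ grows at most polynomially. Both sides are therefore continuous in $t$ with Laplace transforms agreeing on $\Re\lambda>0$, and uniqueness of the Laplace transform gives equality. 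The case $k=0$ is measure zero, or follows by continuity.

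The main obstacle is bookkeeping rather than analysis. One must get the exchange contribution $a_{k,p}\widehat\cX_\gamma$ with the correct sign and normalization, and verify the growth conditions that justify the Laplace inversion for the nonlinear source, at least on the bootstrap interval.
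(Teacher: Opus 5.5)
Your proposal is correct and follows the paper's route. The paper rewrites \eqref{eqn:fourier} at $(k-p,p)$ as $(i\partial_t - A_{k-p,p})\Fgamma(t,k-p,p) = -a_{k-p,p}\hw(k)\Frho_\gamma(t,k) + \FF(t,k-p,p)$, treats the exchange linear response and the nonlinear commutator as forcing, and invokes Proposition~\ref{prop:resolvent-linear} with the Green function of Proposition~\ref{prop:Green}, just as you do. Your sign check, the identification of $\widetilde S$ as the Laplace transform of $\FS$, and the truncation/uniqueness argument fill in what the paper leaves implicit; one point to tighten is that $\widetilde\rho$ is a priori known to exist only for $\Re\lambda$ large, not on all of $\Re\lambda>0$. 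This follows from Gronwall applied to the Volterra form $\Frho(t,k)=\FS(t,k)+i\hw_1(k)\int_0^t\big(\int e^{-i(t-s)A_{k-p,p}}a_{k-p,p}\,dp\big)\Frho(s,k)\,ds$, and agreement of the transforms on a right half-plane already suffices for uniqueness.
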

\begin{proof} Indeed, it follows from \eqref{eqn:fourier}, now evaluated at $(k-p,p)$, that 
\[
	(i\partial_t - A_{k-p,p} )\Fgamma(t,k-p,p) = - a_{k-p,p} \hw(k) \Frho_{\gamma}(t,k) + \FF(t,k-p,p),
\]
with $\FF(t,k,p)$ defined as in \eqref{defFF}. Applying the linear theory developed in the previous section yields the representation as stated. 
\end{proof}

\begin{lemma}\label{lem-reduceS}
Let $\Frho(t,k)$ satisfy \eqref{repdensity}. Then, as long as $\zeta(t)$ remains finite, there holds 
\[
\| \langle k \rangle^{\sigma} \langle kt \rangle^{N} \widehat{\rho}_\gamma(t,k) \|_{L^{\infty}_k}\lesssim \sup_{s\in [0,t]} \| \langle k \rangle^{\sigma} \langle ks \rangle^{N} \widehat{S}(s,k) \|_{L^{\infty}_k},
\]
for any $\sigma\ge 0$. 
\end{lemma}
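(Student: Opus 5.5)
The plan is to start from the representation \eqref{repdensity} and treat the two terms separately. Fix $k\neq 0$ and $t\ge 0$, and set $M(t):=\sup_{s\in[0,t]}\|\langle k\rangle^\sigma\langle ks\rangle^N\widehat S(s,k)\|_{L^\infty_k}$, so that $|\widehat S(s,k)|\le M(t)\langle k\rangle^{-\sigma}\langle ks\rangle^{-N}$ for all $s\le t$. The first term $\widehat S(t,k)$ in \eqref{repdensity} is then trivially bounded by $M(t)\langle k\rangle^{-\sigma}\langle kt\rangle^{-N}$. Since $\langle k\rangle^\sigma$ is a multiplier independent of $s$, the weight $\langle k\rangle^{\sigma}$ passes through the time convolution, so the whole task is to show
\[
\int_0^t |\FG^r(t-s,k)|\,\langle ks\rangle^{-N}\,ds \lesssim \langle kt\rangle^{-N}
\]
uniformly in $k\neq0$ and $t\ge0$.

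For this I would invoke Proposition \ref{prop:Green}, which gives $|\FG^r(t-s,k)|\le C_{n_0}|k|\langle k(t-s)\rangle^{-n_0+3}$, with $n_0-3\ge N$ by the choice $N\le n_0-3$. After the change of variables $\tau = |k|s$, and writing $T=|k|t$, the integral reduces to
\[
\int_0^T \langle T-\tau\rangle^{-(n_0-3)}\langle \tau\rangle^{-N}\,d\tau,
\]
and the factor $|k|$ in the Green function bound is exactly absorbed by $ds = d\tau/|k|$. This scaling is the point of the argument: the bound is uniform in $k$ precisely because $\FG^r$ carries a factor $|k|$ and decays in $|k|(t-s)$.

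The remaining step, which is the only place where one must take care, is the standard convolution estimate
\[
\int_0^T\langle T-\tau\rangle^{-a}\langle\tau\rangle^{-N}\,d\tau\lesssim\langle T\rangle^{-N},\qquad a\ge N,\ a>1,\ N>1.
\]
I will prove it by splitting at $\tau=T/2$. On $[0,T/2]$ we have $\langle T-\tau\rangle\gtrsim\langle T\rangle$, so this part is bounded by $\langle T\rangle^{-a}\int_0^\infty\langle\tau\rangle^{-N}\,d\tau\lesssim\langle T\rangle^{-N}$, using $N>1$ and $a\ge N$. On $[T/2,T]$ we have $\langle\tau\rangle\gtrsim\langle T\rangle$, so this part is bounded by $\langle T\rangle^{-N}\int_0^\infty\langle u\rangle^{-a}\,du\lesssim\langle T\rangle^{-N}$, using $a>1$.

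The hypotheses $N>d\ge3$ and $n_0-3\ge N$ guarantee the needed integrability. I do not foresee a serious obstacle. The one subtlety is $k=0$, where $\langle kt\rangle=1$; this is a measure-zero point and can be handled by continuity, or excluded since the $L^\infty_k$ norm ignores it. Combining the two terms gives $\langle k\rangle^\sigma\langle kt\rangle^N|\widehat\rho(t,k)|\lesssim M(t)$, which is the claimed estimate.
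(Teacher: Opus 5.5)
Your proposal is correct and follows the paper's proof. Both use $|\widehat{G}^r(t,k)|\lesssim |k|\langle kt\rangle^{-n_0+3}$ from Proposition \ref{prop:Green} together with $n_0-3\ge N$ to bound the time-convolution term in \eqref{repdensity}. Your rescaling $\tau=|k|s$ and the split at $T/2$ simply spell out the convolution estimate that the paper asserts in a single line.
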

\begin{proof} In view of the representation \eqref{repdensity}, we only need to bound the time integral term that involves the Green function $\FG^r(t,k)$. Indeed, using Proposition \ref{prop:Green}, precisely $|\widehat{G}^r(t,k)  | \lesssim |k| \langle kt \rangle^{-n_0 +3}$, we bound 
\begin{equation*}
	\begin{split}
\left|  \int_{0}^{t} \widehat{G}^{r}(t-s, k) \widehat{S}(s,k)\, ds \right|
		&\lesssim |k| \int_{0}^{t} \langle k(t-s) \rangle^{-n_0 +3} |\widehat{S}(s,k)|\;ds \\
		&\lesssim  \sup_{s\in [0,t]} \| \langle k \rangle^{\sigma} \langle ks \rangle^{N} \widehat{S}(s,k) \|_{L^{\infty}_k} |k| \int_{0}^{t} \langle k(t-s) \rangle^{-n_0 +3} \langle k\rangle^{-\sigma} \langle ks\rangle^{-N}\;ds
		\\
		&\lesssim  \sup_{s\in [0,t]} \| \langle k \rangle^{\sigma} \langle ks \rangle^{N} \widehat{S}(s,k)\|_{L^{\infty}_k} \langle k\rangle^{-\sigma} \langle kt\rangle^{-N}, 
	\end{split}
\end{equation*}
provided $n_0 -3 \ge N$. The lemma follows. 
\end{proof}

\begin{proof}[Proof of Proposition \ref{prop-density}]
We now prove Proposition \ref{prop-density}. Thanks to Lemma \ref{lem-reduceS}, it remains to bound the source term $\FS(t,k)$. By definition of $\FS(t,k)$ in \eqref{defSkt}, 
we write 
\[
	\FS(t,k) = \Frho^{0}(t,k) + \Frho^{X}(t,k) + \Frho^{\Phi}(t,k) + \Frho^{\Psi}(t,k),
\]
where we have set 
\begin{equation}\label{defrho}	
\begin{aligned}
	\Frho^{0}(t,k)&= \int e^{-itA_{k-p,p}} \hgamma_0 (k-p,p) \, dp, \\
	\Frho^{X}(t,k) &= -i\int_0^t \int e^{-i(t-s)A_{k-p,p}} a_{k-p,p} \widehat{\mathcal{X}}_{\gamma}(s,k-p,p)  \, dp \, ds, \\
	\Frho^{\Phi}(t,k) &=- \frac{i}{(2\pi)^d} \int_0^t \iint e^{-i\Phi} \widehat{\cP}_\gamma (s,k-p,\ell-k+p)\widehat{\mu}(s,k-p-\ell,p) \, d\ell \, dp \, ds,  \\
	\Frho^{\Psi}(t,k)
	&= \frac{i}{(2\pi)^d} \int_0^t \iint
	e^{-i\Psi}\, \widehat{\cP}_\gamma (s,-p+\ell,p) \widehat{\mu}(s,k-p,p-\ell)\, d\ell \, dp\, ds,
	\end{aligned}\end{equation}
with the phase functions 
\begin{equation}\label{defPhi}
\begin{aligned}
\Phi &= tA_{k-p,p} - sA_{k-p,\ell-k+p},
\\
\Psi&= tA_{k-p,p} - sA_{p,\ell-p}.
\end{aligned}
\end{equation} 
We stress that the phase functions depend on all the Fourier variables. 
Let us now bound each term. First, the contribution from the initial data is direct, upon using the oscillation $e^{-itA_{k-p,p}}$. Indeed, using Lemma \ref{lemAphase}, we obtain  
$$
\begin{aligned}
	|\Frho^{0}(t,k)|
&	\lesssim \langle kt \rangle^{-N} \sum_{|\alpha| \le N} \int |\partial_p^{\alpha} \widehat{\gamma_0} (k-p,p) | \, dp
	\lesssim \epsilon_0\langle kt \rangle^{-N} \int \langle k-p\rangle^{-\sigma_0}\langle p\rangle^{-\sigma_0} \, dp
,
\end{aligned}
$$
which is bounded by $C_0\epsilon_0 \langle kt \rangle^{-N} \langle k \rangle^{-\sigma_0}$, provided $\sigma_0>d$ (upon using \eqref{minab}). This proves the desired estimate on $\Frho^{0}(t,k)$, provided $\sigma_0\ge \sigma$.

\subsubsection{Linear reaction $\rho^X$}

In this section, we bound the linear contribution associated to the exchange term, namely the density $\rho^X$. Making use of the oscillation $e^{-i(t-s)A_{k-p,p}} $ and using again Lemma \ref{lemAphase}, we bound 
\begin{align*}
	|\Frho^{X}(t,k)|
	&\lesssim  \sum_{|\alpha_1| + |\alpha_2| \le N} \int_0^t \langle k (t-s) \rangle^{-N}\int \left| \partial_p^{\alpha_1} a_{k-p,p}\,  \partial_p^{\alpha_2} \widehat{\cX}_\gamma (s,k-p,p) \right| \, dp \, ds. 
	\end{align*}
Recall from Lemma \ref{lem:akp} that $|k|^{-1}\partial_p^{\alpha_1} a_{k-p,p}$ decays rapidly in $\min\{\langle k-p\rangle,\langle p\rangle\}$, and the integrability in $p$ is thus valid. Therefore, using the bootstrap assumptions in \eqref{defzeta}, we bound 
\begin{align*}
	|\Frho^{X}(t,k)|
	&\lesssim \epsilon_1\| |k|^{-1}a_{k-p,p} \|_{W^{N, 1}_p} \int_0^t \langle k(t-s) \rangle^{-N} \langle ks \rangle^{-N} \langle k \rangle^{-\sigma} \langle s\rangle^{\delta}\zeta_\rho(s) \, |k|\, ds \\
	&\lesssim \epsilon_1\langle kt \rangle^{-N}  \langle k \rangle^{-\sigma}\langle t\rangle^{\delta} \zeta_\rho(t),
\end{align*}
as desired, provided that $N\le n_0 -1$.

\subsubsection{Nonlinear reaction $\rho^{\Phi,\Psi}$}

Next, we treat the nonlinear contribution, namely $\Frho^\Phi$ and $\Frho^\Psi$. We shall focus precisely on bounding $\Frho^\Phi$, namely 
\begin{equation}\label{defrhoH1}
\begin{aligned}	
	\Frho^{\Phi}(t,k) &=- \frac{i}{(2\pi)^d} \int_0^t \int e^{-i\Phi} \widehat{\cP}_\gamma (s,k-p,\ell-k+p)\widehat{\mu}(s,k-p-\ell,p) \, d\ell \, dp \, ds,
\end{aligned}
\end{equation}
where the phase function $\Phi$ is defined as in \eqref{defPhi}. The estimates on $\Frho^\Psi$ are nearly identical, and we shall therefore skip to repeat the details. Precisely, we will prove that 
\begin{equation}\label{bdrhoH}
|\widehat{\rho}^\Phi(t,k)|\le C \zeta(t)^2 \langle t\rangle^{\delta}  \langle k \rangle^{-\sigma} \langle kt \rangle^{-N} 
 \end{equation}
uniformly in $k$ and $t\ge 0$, for some universal constant $C$. As in the previous case, we shall make a crucial use of oscillation $e^{i\Phi}$ to derive decay in $kt$. Unfortunately, Lemma \ref{lem:IBP} is not applicable directly, since the ratio of $\partial_p^\alpha \Phi$ to $|\partial_p \Phi|$ may not be universally bounded, due to resonances (i.e. when $\partial_p \Phi$ is a suborder of $|kt|$, while higher derivatives $\partial_p^\alpha\Phi$ is of order $|kt|$ due to the perturbation $m(k)$ in the dispersion relation $\omega(k)$). To overcome this issue, we are obliged to further examine the resonance region 
when $\{ |\partial_p\Phi|\ll |kt|\}$. Precisely, let 
\begin{equation}\label{deftheta0}
	\theta_0 : = \inf_{k,p} \frac{|\partial_p A_{k-p,p}|}{|k|}.
\end{equation}
In view of Lemma \ref{lem:Akp}, $\theta_0 \ge 2- \| \nabla^2 m \|_{L^{\infty}}$, which is well-defined and strictly positive, since $\| \nabla^2 m \|_{L^{\infty}}$ is sufficiently small.
We take a sufficiently smooth cut-off function $\chi(x)$ which vanishes for $|x|\le 1$ and is equal to one for $|x|\ge 2$, and introduce 
\begin{equation}\label{cutoff} 
\varphi_\Phi = \chi \Big(\frac{4\partial_p\Phi}{\theta_0 |kt|}\Big)
\end{equation}
with $\theta_0$ as in \eqref{deftheta0}. Going back to the integral $\Frho^{\Phi}(t,k)$ in \eqref{defrhoH1}, we integrate by parts in $p$ once and put appropriate cut-off functions to distinguish the resonant and non-resonant regions. Indeed, upon writing $e^{-i\Phi} = L_pe^{-i\Phi}$, with  
\begin{equation}\label{recallL}
		L_p f = \frac{1+ i\nabla_p \Phi \cdot \nabla_p f}{1+|\nabla_p \Phi|^2}, \qquad L^{\ast}_p g = \frac{g}{1+|\nabla_p \Phi|^2} -i \nabla_p \cdot \left( \frac{\nabla_p \Phi}{1+|\nabla_p \Phi|^2} g \right),
	\end{equation}
we write 
\begin{equation}\label{rhoHRes}
\Frho^{\Phi}(t,k) = \Frho^{\Phi, R}(t,k) + \Frho^{\Phi, NR}(t,k),
\end{equation}
where 
\begin{equation}\label{redefrhoH}
\begin{aligned}
\Frho^{\Phi,R}(t,k)&= \int_0^t \int e^{-i\Phi} L^{\ast}_p \Big( \widehat{\cP}_\gamma (s,k-p,\ell-k+p)\widehat{\mu}(s,k-p-\ell,p)\Big) (1-\varphi_\Phi)\, dp \, d\ell \, ds,
\\
\Frho^{\Phi,NR}(t,k)&= \int_0^t \int e^{-i\Phi} L^{\ast}_p \Big( \widehat{\cP}_\gamma (s,k-p,\ell-k+p)\widehat{\mu}(s,k-p-\ell,p)\Big) \varphi_\Phi \, dp \, d\ell \, ds,
\end{aligned}
\end{equation}
which correspond to resonant and non-resonant regions. We treat each case separately. 

\subsubsection*{Non-resonant Case: $|\partial_p \Phi|\ge \theta_0 |kt|/4$.}

We shall argue that in this non resonant case, Lemma \ref{lem:IBP} can be applied. Indeed, 
in this case, we note that by construction, the support of $\varphi_\Phi$ implies that $|\partial_p \Phi| \ge \theta_0 |kt|/4$. This, together with the fact from \eqref{dispAkp} that $\partial_p A_{k-p,p} \sim |k|$ and $\partial_pA_{k-p,\ell-k+p} \sim |\ell|$, we have 
\begin{equation}\label{bdls} 
|\ell s | \lesssim  |s\partial_pA_{k-p,\ell-k+p} | \le |t\partial_p A_{k-p,p}| + |\partial_p \Phi| \lesssim |\partial_p \Phi|,  
\end{equation}
uniformly in $k,p,\ell$ and $s,t\ge 0$. On the other hand, recalling $A_{k,p} = \omega(k) - \omega(p)$ with $\omega(k) = |k|^2 + m(k)$ for sufficiently smooth and symmetric symbol $m(k)$, for $2 \le |\alpha| \le 3n_0 -1$, we compute 
$$ |\partial_p^\alpha A_{k-p,p}| \lesssim |\partial_p^\alpha m(p-k) - \partial_p^\alpha m(p)| \lesssim |k|$$
which yields 
\begin{equation}\label{bddPa} |\partial_p^\alpha \Phi| \le |t\partial_p^\alpha A_{k-p,p}| + |s\partial_p^\alpha A_{k-p,\ell-k+p}|\lesssim |kt| + |\ell s| \lesssim |\partial_p\Phi|,
\end{equation}
for $2\le |\alpha| \le 3n_0 -1$, upon using \eqref{bdls}. That is, the non-resonant condition \eqref{phase-cond} is satisfied, and Lemma \ref{lem:IBP} can be applied in this case. In addition, we note that derivatives of $\chi$ are supported in $1 \le |x| \le 2$, so that $|\partial_p \Phi| \sim |kt|$ on the support of $\partial_p^{\alpha} \varphi_{\Phi}$. Hence, the cut-off functions $\varphi_\Phi$ are regular with bounded derivatives. Therefore, integrating by parts and using the bootstrap assumptions in \eqref{defzeta} and \eqref{rhoXchange}, we bound  
\begin{equation}\label{bdrhoH0}
\begin{aligned}	
|\Frho^{\Phi,NR}(t,k)| 
&\lesssim \zeta(t)^2\int_0^t \int_{\{|\partial_p \Phi|\gtrsim |kt|\}} \langle s\rangle^{1+2\delta}\langle \ell\rangle^{-\sigma}\langle \ell s\rangle^{-N} \langle \partial_p\Phi\rangle^{-N} 
\\&\qquad \times |k-\ell|^{-\delta}\langle k-\ell \rangle^{-\sigma+\delta}\min\{\langle k-\ell-p\rangle, \langle p\rangle\}^{-\sigma}\, dp \, d\ell \, ds
\\
&\lesssim \zeta(t)^2\langle kt\rangle^{-N}\int_0^t \int \langle s\rangle^{1+2\delta}\langle \ell\rangle^{-\sigma}\langle \ell s\rangle^{-N} |k-\ell|^{-\delta}\langle k-\ell \rangle^{-\sigma+\delta} \, d\ell \, ds,
\end{aligned}
\end{equation}
upon using the non-resonant condition and the fact that $\min\{\langle k-\ell-p\rangle, \langle p\rangle\}^{-\sigma}$ is integrable in $p$. Next, using 
$$\langle k-\ell \rangle^{-\sigma}\langle \ell\rangle^{-\sigma} \le 2^{\sigma}\langle k\rangle^{-\sigma}
 \min\{ \langle k-\ell\rangle, \langle \ell\rangle\}^{-\sigma},$$
which yields the desired decay in $k$. Therefore, it suffices to prove the following bound
\begin{equation}\label{claim:NR}
	\int \langle \ell s \rangle^{-N} |k-\ell|^{-\delta} \langle k -\ell \rangle^{\delta} \min \{ \langle k- \ell \rangle, \langle \ell \rangle \}^{-\sigma} \, d\ell
	\lesssim \langle s \rangle^{-d+2\delta}
\end{equation}
uniformly in $k$. Indeed, if \eqref{claim:NR} holds, then we have
\begin{equation}\label{bdrhoH1}
\begin{aligned}	
|\Frho^{\Phi,NR}(t,k)| 
&\lesssim \zeta(t)^2\langle kt\rangle^{-N}\langle k\rangle^{-\sigma}\int_0^t \int \langle s\rangle^{1+2\delta}\langle \ell s \rangle^{-N} |k-\ell|^{-\delta} \langle k -\ell \rangle^{\delta} \min \{ \langle k- \ell \rangle, \langle \ell \rangle \}^{-\sigma}\, d\ell \, ds
 \\
 &\lesssim \zeta(t)^2\langle kt\rangle^{-N}\langle k\rangle^{-\sigma}\int_0^t \langle s\rangle^{-d+1+4\delta}\; ds
  \\
 &\lesssim \zeta(t)^2\langle kt\rangle^{-N}\langle k\rangle^{-\sigma},
\end{aligned}
\end{equation}
as desired, for sufficiently small $\delta>0$. We now prove the claim \eqref{claim:NR}.
The case when $|k-\ell| \ge 1$ is direct because we have $|k-\ell|^{-\delta} \langle k-\ell \rangle^{\delta} \lesssim 1$, so that we bound
\[
	\int_{\{|k-\ell| \ge 1 \}} \langle \ell s \rangle^{-N} \min \{ \langle k- \ell  \rangle, \langle \ell \rangle\}^{-\sigma} \, d\ell \lesssim \langle s \rangle^{-d}.
\]
When $|k-\ell| \le 1$, we have $\langle k-\ell \rangle^{\delta} \lesssim 1$. Using the H\"older inequality, we bound
\begin{equation*}
\begin{aligned}
	\int_{\{|k-\ell| \le 1\}} \langle \ell s \rangle^{-N} |k-\ell|^{-\delta} \, d\ell
	&\lesssim \left(\int_{\{|k-\ell| \le 1\}} \langle \ell s \rangle^{-Np} \, d\ell \right)^{1/p}
		\left(\int_{\{|k-\ell| \le 1\}}  |k-\ell|^{-\delta p'} \, d\ell \right)^{1/p'} \\
	&\lesssim \langle s \rangle^{-d/p}
	\lesssim \langle s \rangle^{-d+2\delta},
\end{aligned}
\end{equation*}
where $p= d/(d-2\delta)$ and $p'= d/(2\delta)$, so that $1/p + 1/p' = 1$, and $|k-\ell|^{-\delta p'}$ is locally integrable in $\ell$. This proves \eqref{claim:NR}.

\subsubsection*{Resonant Case: $|\partial_p \Phi| \le \theta_0 |kt|/2$.}

Next, we consider the resonant integral $\Frho^{\Phi,R}(t,k)$ defined as in \eqref{redefrhoH}. By construction, the support of $1-\varphi_\Phi$ is contained in the resonant region $\{|\partial_p \Phi| \le \theta_0 |kt|/2\}$ with $\theta_0$ as in \eqref{deftheta0}. 
Recall that $\partial_p A_{k-p,p} \sim |k|$ and $\partial_pA_{k-p,\ell-k+p} \sim |\ell|$. Therefore, in this resonant case, it follows that 
\begin{equation}\label{resbdls} 
	|\ell s | \gtrsim |s\partial_pA_{k-p,\ell-k+p} | \ge |t\partial_p A_{k-p,p}| - |\partial_p \Phi| \ge \theta_0 |kt| - |\partial_p \Phi| \ge \theta_0|kt| /2,  
\end{equation}
uniformly in $k,p,\ell$ and $s,t\ge 0$.
As a result, the decay of order $\langle \ell s\rangle^{-N}$ due to the bootstrap norm of $\Frho(s,\ell)$ yields the desired decay of order $\langle kt\rangle^{-N}$. Therefore, we bound 
$$
\begin{aligned}	
|\Frho^{\Phi, R}(t,k)| &\lesssim \int_0^t \int_{\{|\partial_p \Phi| \le \theta_0  |kt|/2\}}  \Big[ \frac{1}{\langle \partial_p\Phi\rangle} + \frac{|\partial_p^2\Phi|}{\langle \partial_p\Phi\rangle^2}\Big] 
\\&\qquad \times |\langle \partial_p\rangle  \widehat{\cP}_\gamma (s,k-p,\ell-k+p)||\langle \partial_p\rangle\widehat{\mu}(s,k-p-\ell,p)|\, dp \, d\ell \, ds.
\end{aligned}
$$
Recall that $A_{k-p,p} = \omega(p-k) - \omega(p)$, with $\omega(p) = |p|^2 + m(p)$ for some bounded $C^N$ function $m(p)$. As a result,  for $|\alpha|\ge 2$ and $0\le s\le t$, we compute 
$$
|\partial_p^\alpha \Phi| \le |t\partial_p^\alpha A_{k-p,p}| + |s\partial_p^\alpha A_{k-p,\ell-k+p}|\lesssim |t| \lesssim |kt| |k|^{-1} \lesssim |\ell s| |k|^{-1},
$$
upon using \eqref{resbdls}. In the case when $|k|\le 1$, we instead bound 
$$|\partial_p^\alpha \Phi| \le |t\partial_p^\alpha A_{k-p,p}| + |s\partial_p^\alpha A_{k-p,\ell-k+p}|\lesssim |kt| + |\ell s| \lesssim |\ell s|.
$$
Combining, for $|\alpha|\ge 2$, we obtain 
$$
|\partial_p^\alpha \Phi| \le |t\partial_p^\alpha A_{k-p,p}| + |s\partial_p^\alpha A_{k-p,\ell-k+p}|
\lesssim |\ell s| \langle k\rangle^{-1},
$$
uniformly in $k,p,\ell$, which yields  
\begin{equation}\label{resbddPa}
\frac{1}{\langle \partial_p\Phi\rangle} + \frac{|\partial_p^2\Phi|}{\langle \partial_p\Phi\rangle^2}
\lesssim \frac{1}{\langle \partial_p \Phi \rangle} + \frac{|\ell s | \langle k \rangle^{-1}}{\langle \partial_p \Phi \rangle^2}.
\end{equation}
Hence, together with the bootstrap assumptions in \eqref{defzeta} and \eqref{rhoXchange}, we bound 
\begin{equation}\label{rhoH1R}
	\begin{aligned}	
		|\Frho^{\Phi, R}(t,k)| &\lesssim \zeta(t)^2\int_0^t \int\langle \ell \rangle^{-\sigma}\langle \ell s\rangle^{-N} \langle s\rangle^\delta
		\left[ \frac{1}{\langle \partial_p \Phi \rangle} + \frac{|\ell s | \langle k \rangle^{-1}}{\langle \partial_p \Phi \rangle^2} \right]
		\\&\qquad \times |k-\ell|^{-\delta}\langle k-\ell \rangle^{-\sigma-1+\delta}\min\{\langle k-\ell-p\rangle, \langle p\rangle\}^{-\sigma}
		\, dp \, d\ell \, ds.
	\end{aligned}
\end{equation}
Here in the above estimate, we have used precisely the boostrap assumptions on $\langle \partial_p\rangle\widehat{\mu}$ (i.e. up to the first derivatives), and therefore no growth factor $t^{1+\delta}$ appears, cf. \eqref{bdrhoH0}, while gaining an extra factor of decay in $\langle k-\ell\rangle$. 

In this resonant case, $\partial_p\Phi$ is relatively small (when compared with $|kt|$), which requires delicate analysis to treat these echo resonances. In particular, $\partial_p\Phi$ depends on all the Fourier variables $k,p,\ell$, and therefore there are no apparent separation between $kt$ and $\ell s$, leading to a complication in taking integration (i.e. one cannot simply taking $p$ or $\ell$-integration separately in addition to the fact that the Jacobian determinant of the map $(p,\ell) \mapsto (p, \partial_p \Phi)$ may not be invertible, since $\partial_p\partial_\ell\Phi \sim |kt| + |\ell s|$).

To proceed, first using $\langle k\rangle^{-1}\langle k-\ell\rangle^{-1} \le 2 \langle \ell\rangle^{-1}$, we tame the growth factor of $|\ell|$ (that appears in $|\ell s|$). Next, using $\langle \ell s\rangle^{-N}\lesssim \langle kt\rangle^{-N}$, thanks to \eqref{resbdls}, 
and 
$$\langle k-\ell\rangle^{-\sigma}\langle \ell\rangle^{-\sigma} \lesssim \langle k\rangle^{-\sigma}
 \min\{ \langle k-\ell\rangle, \langle \ell\rangle\}^{-\sigma},$$
we obtain the desired decay in $k$ and $kt$ for $|\Frho^{\Phi, R}(t,k)|$. Therefore, it suffices to prove the following bound
 \begin{equation}\label{claimrhoH1R}
\begin{aligned}	
\FI(s,k) :&= \int	\left[ \frac{1}{\langle \partial_p \Phi \rangle} + \frac{| s | }{\langle \partial_p \Phi \rangle^2} \right] |k-\ell|^{-\delta} \langle k-\ell \rangle^{ \delta}\min\{ \langle k-\ell\rangle, \langle \ell\rangle\}^{-\sigma}\min\{ \langle k-\ell-p\rangle, \langle p\rangle\}^{-\sigma}
\, dp \, d\ell  \\
&\lesssim \langle s\rangle^{-1},
\end{aligned}
\end{equation}
uniformly in $k$. Indeed, if \eqref{claimrhoH1R} holds, we then have 
\begin{equation}\label{rhoH1Rest}
\begin{aligned}	
|\Frho^{\Phi, R}(t,k)| 
&\lesssim \zeta(t)^2\langle k \rangle^{-\sigma}\langle kt\rangle^{-N}  \int_0^t 
\langle s\rangle^\delta
 \FI(s,k)
\, ds
\\
&\lesssim \zeta(t)^2\langle k \rangle^{-\sigma}\langle kt\rangle^{-N}  \int_0^t 
\langle s\rangle^{-1+\delta}
\, ds
\\
&\lesssim \zeta(t)^2\langle k \rangle^{-\sigma}\langle kt\rangle^{-N} 
\langle t\rangle^{\delta}
\end{aligned}
\end{equation}
 as desired. We now focus on proving the claim \eqref{claimrhoH1R}. The case when $|s|\le 1$ is direct, since the functions $|k-\ell|^{-\delta} \langle k-\ell \rangle^{\delta}\min\{ \langle k-\ell\rangle, \langle \ell\rangle\}^{-\sigma}$ and $\min\{ \langle k-\ell-p\rangle, \langle p\rangle\}^{-\sigma}$ are integrable in $\ell$ and $p$. Next, we focus on the case when $s\ge 1$. We further consider two cases: $|k-\ell - p|\ge |p|/2$ and $|k-\ell - p|\le |p|/2$. 
 
 \subsubsection*{Resonant sub-Case I: $|k-\ell - p|\ge |p|/2$.}
 
 We first prove the claim \eqref{claimrhoH1R} in this subcase. First, since $|k-\ell - p|\gtrsim |p|$, we note that 
$$\min\{ \langle k-\ell-p\rangle, \langle p\rangle\}^{-\sigma} \lesssim \langle p\rangle^{-\sigma}.$$ 
 Therefore, by first integrating the integral $\FI(s,k)$ in $\ell$ and then in $p$, the claim \eqref{claimrhoH1R} would follow from the following bound
\begin{equation}\label{bdresPhi1}
\begin{aligned}
\int \left[ \frac{1}{\langle \partial_p \Phi \rangle} + \frac{| s | }{\langle \partial_p \Phi \rangle^2} \right] |k-\ell|^{-\delta} \langle k-\ell \rangle^{\delta} \min\{ \langle k-\ell\rangle, \langle \ell\rangle\}^{-\sigma}\, d\ell &\lesssim \langle s\rangle^{-1},
\end{aligned}
\end{equation}
uniformly in $k,p$ and $s\ge 1$. To prove \eqref{bdresPhi1}, we note that 
$$\langle \partial_p\Phi \rangle \ge |t\partial_pA_{k-p,p} - s\partial_pA_{k-p,\ell-k+p}| \ge s\Big| \partial_pA_{k-p,\ell-k+p} - \frac{t}{s}\partial_pA_{k-p,p} \Big|.$$ 
Recall that the map $\ell\mapsto \eta =\partial_pA_{k-p,\ell-k+p}$ is diffeomorphism and has Jacobian of order $\partial_\ell \eta \sim \partial_p^2 \omega(\ell-k+p) \sim 2$, upon recalling that $\omega(p) = |p|^2 + m(p)$, where $m(p)$ has a sufficiently small $C^2$ norm. In addition, $|\ell|\lesssim |\eta|\lesssim |\ell|$, uniformly in $k,p$. Therefore, provided that $\sigma>d$, we bound 
\begin{equation}\label{bdresPhi}
\begin{aligned}
&\int \left[ \frac{1}{\langle \partial_p \Phi \rangle} + \frac{| s | }{\langle \partial_p \Phi \rangle^2} \right] |k-\ell|^{-\delta} \langle k-\ell \rangle^{\delta} \min\{ \langle k-\ell\rangle, \langle \ell\rangle\}^{-\sigma}\, d\ell \\
&\lesssim |s|^{-1} \sum_{m=1}^2 \int \Big|\eta  - \frac{t}{s} \partial_pA_{k-p,p}\Big|^{-m} |k-\ell'|^{-\delta} \langle k-\ell' \rangle^{\delta}\min\{ \langle k-\ell'\rangle, \langle \ell'\rangle\}^{-\sigma}\, d\eta
\end{aligned}
\end{equation}
in which $\ell' = \ell'_{k,p}(\eta)$ denotes the inverse map of $\ell \mapsto \eta=\partial_pA_{k-p,\ell-k+p}$ for each $k,p$. Note that the function $|\eta|^{-2}$ is locally integrable, since $\eta \in \RR^d$ with $d\ge 3$. Therefore, setting $B = \frac{t}{s} \partial_pA_{k-p,p}$ which is independent of $\eta$, we bound for $m=1,2$ that
$$ 
\begin{aligned}
&\int |\eta  - B|^{-m} |k-\ell'|^{-\delta} \langle k-\ell' \rangle^{\delta}\min\{ \langle k-\ell'\rangle, \langle \ell'\rangle\}^{-\sigma} \, d\eta
\lesssim 1,
\end{aligned}
 $$
uniformly in $k,p$ and $B$. 
This proves \eqref{bdresPhi1}, and therefore the claim \eqref{claimrhoH1R} in this resonant sub-case. 

 \subsubsection*{Resonant sub-Case II: $|k-\ell - p|\le |p|/2$.}

Next, we prove the claim \eqref{claimrhoH1R} in the resonant case when $|k-\ell - p|\le |p|/2$. In this case, we note that  
$$ |k-\ell| \ge |p| - |k-\ell-p| \ge |p|/2.$$
Therefore, if $|k-\ell|\le |\ell|$, then 
$$
\min\{\langle k-\ell\rangle, \langle \ell\rangle\} =\langle k-\ell \rangle \gtrsim \langle p\rangle. 
$$ 
On the other hand, if $|k-\ell|\ge |\ell|$, then 
$$
\min\{\langle k-\ell\rangle, \langle \ell\rangle\} =\langle \ell \rangle \gtrsim \langle k-p\rangle \langle k-\ell-p\rangle^{-1}, 
$$ 
in which we have used $\langle a\rangle \le 2 \langle a-b\rangle \langle b\rangle$. As a result, we obtain 
\begin{equation}
\min\{\langle k-\ell\rangle, \langle \ell\rangle\}^{-\sigma} \langle k-\ell-p\rangle^{-\sigma} \lesssim \min\{\langle k-\ell\rangle, \langle \ell\rangle\}^{-\sigma/2} \min\{ \langle k-p\rangle, \langle p\rangle\}^{-\sigma/2}.
\end{equation}
Therefore, in this resonant sub-case, we bound 
$$
\begin{aligned}
& \int	\left[ \frac{1}{\langle \partial_p \Phi \rangle} + \frac{| s | }{\langle \partial_p \Phi \rangle^2} \right] |k-\ell|^{-\delta} \langle k-\ell \rangle^{ \delta}\min\{ \langle k-\ell\rangle, \langle \ell\rangle\}^{-\sigma}\min\{ \langle k-\ell-p\rangle, \langle p\rangle\}^{-\sigma}
\, dp \, d\ell 
\\
&\lesssim \int \min\{ \langle k-p\rangle, \langle p\rangle\}^{-\sigma/2} \left( \int \left[ \frac{1}{\langle \partial_p \Phi \rangle} + \frac{| s | }{\langle \partial_p \Phi \rangle^2} \right] |k-\ell|^{-\delta} \langle k-\ell \rangle^{ \delta} \min\{ \langle k-\ell\rangle, \langle \ell\rangle\}^{-\sigma/2}
\, d\ell\right)  dp.
\end{aligned}$$
Using the bound \eqref{bdresPhi} with $\sigma$ replaced by $\sigma/2$, provided $\sigma>2d$, we obtain that the above integral is bounded by $C_0 \langle s\rangle^{-1}$ as claimed in \eqref{claimrhoH1R}. 

This completes the proof of Proposition \ref{prop-density}. 
\end{proof}

\subsection{Exchange kernels}\label{sec:nonlinear-exchange}

In this section, we prove the boostrap bounds on the exchange term. Precisely, we obtain the following. 

\begin{proposition}\label{prop-exchange}
As long as $\zeta(t)$ remains finite, for any $|\alpha|\le N$, there hold 
\begin{equation}\label{exchangebd}
\langle t\rangle^{-\delta}\| \langle k \rangle^{\sigma} \langle kt \rangle^{N} \partial_q^{\alpha} \widehat{\cX}_\gamma(t,k-q,q) \|_{L^{\infty}_{k,q}} \le \epsilon_1\Big(C_0 \epsilon_0 + C_1\epsilon_1 \zeta_\rho(t)+ C_2 \zeta(t)^2\Big)\end{equation}
for some universal constants $C_0,C_1,C_2$. 
\end{proposition}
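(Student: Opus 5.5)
The plan is to reduce the exchange bound to the density bound of Proposition \ref{prop-density}, using the structural observation of Remark \ref{rem-FcP}. By \eqref{compcXkp},
\[
\widehat{\cX}_\gamma(t,k-q,q) = \int \hw_2(\ell-q)\,\Fgamma(t,k-\ell,\ell)\,d\ell ,
\]
so $q$-derivatives fall only on $\hw_2$. This gives
\[
\partial_q^\alpha \widehat{\cX}_\gamma(t,k-q,q) = (-1)^{|\alpha|}\int (\partial^\alpha\hw_2)(\ell-q)\,\Fgamma(t,k-\ell,\ell)\,d\ell ,
\]
which is a density-type integral of $\Fgamma(t,k-\cdot,\cdot)$ against the smooth weight $W_q(\ell):=(\partial^\alpha \hw_2)(\ell-q)$. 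The weight satisfies $\|W_q\|_{W^{2n_0-N,\infty}}\le \epsilon_1$ uniformly in $q$, by \ref{assumption:exchange}. The extra factor $\epsilon_1$ in \eqref{exchangebd} should come from this weight. The remaining task is to show that the weighted density
\[
\Frho_W(t,k):=\int W(\ell)\,\Fgamma(t,k-\ell,\ell)\,d\ell
\]
obeys the bound of Proposition \ref{prop-density}, uniformly over weights with $\|W\|_{W^{N+1,\infty}}\lesssim 1$.

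To do this, I evaluate \eqref{eqn:fourier} at $(k-p,p)$ and integrate in time by Duhamel with the free phase $e^{-i(t-s)A_{k-p,p}}$. This gives
\[
\Fgamma(t,k-p,p)=e^{-itA_{k-p,p}}\Fgamma_0 + i\int_0^t e^{-i(t-s)A_{k-p,p}}\big(a_{k-p,p}\hw_1(k)\Frho(s,k) - \FF(s,k-p,p)\big)\,ds .
\]
Multiplying by $W(p)$ and integrating in $p$ produces four terms, mirroring \eqref{defrho}. The first is the initial-data term $\int e^{-itA_{k-p,p}}W(p)\Fgamma_0\,dp$. The second is a meanfield linear term $\hw_1(k)\int_0^t\big(\int e^{-i(t-s)A_{k-p,p}}W(p)a_{k-p,p}\,dp\big)\Frho(s,k)\,ds$. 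The third is the exchange linear term, in which $\widehat{\cX}_\gamma$ replaces $\Frho$. The fourth consists of the nonlinear terms with phases $\Phi$ and $\Psi$ as in \eqref{defPhi}.

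Notice that no Green-function inversion is needed here. Indeed, $\Frho(s,k)$ is already controlled by Proposition \ref{prop-density}, which has just been proved, and by $\zeta_\rho$. The meanfield linear term is handled like $\Frho^X$. Lemma \ref{lemAphase} applied to $F(p)=W(p)a_{k-p,p}$, together with Lemma \ref{lem:akp}, gives a factor $|k|\langle k(t-s)\rangle^{-N}$. The time convolution against $\langle s\rangle^{\delta}\langle k\rangle^{-\sigma}\langle ks\rangle^{-N}\zeta_\rho$ then yields $\langle t\rangle^\delta\langle k\rangle^{-\sigma}\langle kt\rangle^{-N}$.

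This meanfield term does not carry a small prefactor from the bootstrap, so I bound $\Frho(s,k)$ by the full right side of Proposition \ref{prop-density}, namely $C_0\epsilon_0+C_1\epsilon_1\zeta_\rho+C_2\zeta^2$. Multiplied by the $\epsilon_1$ already carried by $W$, this produces exactly the form in \eqref{exchangebd}. The initial-data term follows from Lemma \ref{lemAphase} as in the bound on $\Frho^0$. The exchange linear term gains an extra $\epsilon_1$ and is treated exactly like $\Frho^X$.

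The main obstacle is the nonlinear part. I will repeat the resonant/non-resonant decomposition \eqref{rhoHRes} verbatim, inserting $W(p)$ into the amplitude. In the non-resonant region, integration by parts via Lemma \ref{lem:IBP} places up to $N$ derivatives on $W(p)\widehat{\cP}_\gamma\widehat\mu$. Derivatives landing on $W$ cost only $\|W\|_{W^{N,\infty}}$, which is acceptable because $2n_0\ge N+|\alpha|$. In the resonant region only one derivative is used, and the estimate \eqref{claimrhoH1R} is unaffected, since $W$ is bounded and $p$-integrability comes from $\min\{\langle k-\ell-p\rangle,\langle p\rangle\}^{-\sigma}$, not from $W$. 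One point needs care: the weight $W_q$ is not decaying in $p$. I must therefore check that every $p$-integration in the density proof relies only on the decay of $\widehat\mu$, $a_{k-p,p}$, or $\Fgamma_0$, and never on decay of the integrand's other factors. From the arguments above this appears to be the case. Finally, I take the supremum over $q$ and over $|\alpha|\le N$ to conclude \eqref{exchangebd}.
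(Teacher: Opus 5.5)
Your proposal is correct and follows the paper's proof. In both, $q$-derivatives fall only on $\hw_2(q-p)$, the resulting weighted density splits into initial-data, linear and nonlinear ($\Phi$, $\Psi$) pieces, and each piece is estimated as in Proposition \ref{prop-density}, with the bounded factor $\partial^\alpha\hw_2$ supplying the extra $\epsilon_1$.

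Your separate treatment of the meanfield part $\hw_1(k)\Frho(s,k)$ of the linear term is a worthwhile precision. You bound it through the already-proved Proposition \ref{prop-density} rather than by $\zeta_\rho$ alone, and this is what yields the $\epsilon_1^2\zeta_\rho$ structure in \eqref{exchangebd}. The paper passes over this point when it calls $\cL^X$ identical to $\Frho^X$.
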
 

\begin{proof} Recalling \eqref{compcXkp}, we compute 
\begin{align*}
 \widehat{\cX}_\gamma(t,k-q,q) 
 &=  \int \hw_2(q-p) e^{-it A_{k-p,p}}\widehat{\mu}(t,k-p,p) \, dp.
\end{align*}
In particular, we note that $q$-derivatives now fall precisely only on $\hw_2(q-p)$. Namely, for any $|\alpha|\le N$, we have 
\[
	\partial_q^{\alpha} \widehat{\cX}_\gamma(t,k-q,q) = \int \partial_q^{\alpha} \hw_2(q-p) e^{-it A_{k-p,p}} \widehat{\mu}(t,k-p, p) \, dp.
\]
Observe that, up to the presence of $\partial_q^{\alpha} \hw_2(q-p)
$, the above integral is identical to the density $\Frho(t,k) = \int e^{-it A_{k-p,p}} \widehat{\mu}(t,k-p, p) \, dp$, and therefore, we may follow similar calculations as done in the previous section to establish the desired decay for $\partial_q^{\alpha} \widehat{\cX}_\gamma(t,k-q,q)$, provided that $|\partial^\alpha\hw_2(p)| \lesssim \epsilon_1$ for $|\alpha|\le 2N$.
Specifically, for sake of completeness, using \eqref{eqn:profile}, 
we compute 
\begin{align*}
		\partial_q^{\alpha} \widehat{\cX}_{\gamma}(t,k-q,q) 
		&= \cI + \cL^X + \cN^\Phi + \cN^\Psi
\end{align*}
where
\begin{align*}
	\cI &= \int \partial^{\alpha} \hw_2(q-p) e^{-itA_{k-p,p}} \Fgamma_0 (k-p,p) \, dp, \\
	\cL^X &= i \int_0^t \int \partial^{\alpha} \hw_2(q-p) e^{-i(t-s) A_{k-p, p}} a_{k-p, p} \widehat{\cP}_{\gamma}(s,k-p, p) \, dp \, ds, \\
	\cN^\Phi &=  \frac{i}{(2\pi)^{d}} \int_0^t \iint \partial^{\alpha} \hw_2(q-p) e^{-i\Phi} \widehat{\cP}_\gamma (s,k-p,\ell-k+p)\widehat{\mu}(s,k-p-\ell,p)  \, d\ell \, dp \, ds, \\
	\cN^\Psi &= -\frac{i}{(2\pi)^d} \int_0^t \iint \partial^{\alpha} \hw_2(q-p) e^{-i\Psi}  \widehat{\cP}_\gamma (s,-p+\ell,p) \widehat{\mu}(s,k-p,p-\ell) \,  d\ell \, dp \, ds,
\end{align*}
with the same phase functions $\Phi, \Psi$ as defined in \eqref{defPhi}. 
Then, observe that $\cI, \cL^X, \cN^\Phi$ and $\cN^\Psi$ are identical to that of $\Frho^{(0)}, \Frho^X, \Frho^\Phi$ and $\Frho^\Psi$ defined as in \eqref{defrho}, up to the appearance of $\partial^{\alpha} \hw_2(q-p)$, which is harmless, as it is bounded by $\epsilon_1$ in $C^N$ norm. The proposition thus follows. 
\end{proof}

\subsection{Proof of Proposition \ref{prop-iter}}

Proposition \ref{prop-iter} now follows straightforwardly. Indeed, in view of the definition of $\zeta_\rho(t)$ in \eqref{defzeta} and the results from Propositions \ref{prop-density} and \ref{prop-exchange}, we have obtained 
$$ \zeta_\rho(t) \le C_0 \epsilon_0 + C_1\epsilon_1 \zeta_\rho(t)+ C_2 \zeta(t)^2.$$
Taking $\epsilon_1$ sufficiently small to absorb the linear term $\zeta_\rho(t)$ to the left, we thus obtain $\zeta_\rho(t) \le C_0' \epsilon_0 + C_2' \zeta(t)^2$ for some universal constants $C_0', C_2'$. On the other hand, in view of the definition of $\zeta_\mu(t)$ in \eqref{defzeta} and the results from Proposition \ref{prop-profile}, we obtain 
$$ \zeta_\mu(t) \le C_0 \epsilon_0 + C_1 \zeta_\rho(t)+ C_2 \zeta(t)^2.$$
This, together with $\zeta_\rho(t) \le C_0' \epsilon_0 + C_2' \zeta(t)^2$, proves that $\zeta(t) = \zeta_\rho(t) + \zeta_\mu(t) \le C_0''\epsilon_0 + C_2'' \zeta(t)^2$, completing the proof of Proposition \ref{prop-iter}. 

\section{Proof of the main results}

In this section, we prove the main results stated in Theorem \ref{thm:main}. Indeed, the main results proven in Section \ref{sec-iter}, see Proposition \ref{prop-iter}, provide the boundedness of the iterative norm $\zeta(t) \le 2C_0 \epsilon_0$. In particular, in view of \eqref{defzeta} and \eqref{rhoXchange}, we obtain 
	\begin{equation}\label{est:decay-Fourier}
		| \Frho_{\gamma}(t,k)|
		+ |\widehat{\cP}_{\gamma}(t,k-p,p) |
		\lesssim \epsilon_0 \langle k\rangle^{-\sigma} \langle kt \rangle^{-N} \langle t \rangle^{\delta}
	\end{equation}
uniformly for all $t\ge 0$ and $k,p \in \mathbb{R}^d$. Applying the classical Hausdorff--Young inequality, for $p \in [2,\infty]$, we obtain  	\[
		\| \partial_x^n \rho_{\gamma}(t) \|_{L^p} 
		\lesssim \left( \int_{\RR^d} |k|^{np'} |\Frho(t,k)|^{p'} \, dk \right)^{1/p'}
		\lesssim \epsilon_0 \langle t \rangle^{-n-d(1-1/p)+\delta}
	\]
	for any integers $n < \min \{N,\sigma\}-d$, where $1/p +1/p'=1$, yielding the decay in the physical space as stated in the main theorem. Next, we establish the scattering theory in the Hilbert--Schmidt space. Indeed, using the profile equation \eqref{eqn:profile}, we bound 
\begin{equation}\label{dtmkp}
	\begin{aligned}
		\frac{d}{dt} \| \Fmu(t) \|_{L^2_{k,p}}
		& \le \|a_{k,p} \widehat{\cP}_{\gamma} (t)\|_{L^2_{k,p}}
+ (2\pi)^{-d} \Big\| \int |\widehat{\cP}_{\gamma}(t,k, \ell-k) \widehat{\mu}(t,k-\ell, p)| \; d\ell \Big\|_{L^2_{k,p}}
\\&\qquad +(2\pi)^{-d} \Big\| \int |\widehat{\mu}(t,k,p-\ell) \widehat{\cP}_{\gamma}(t,\ell-p, p)| \; d\ell \Big\|_{L^2_{k,p}} .
	\end{aligned}
\end{equation}
Using the decay estimate \eqref{est:decay-Fourier} and \eqref{dervakp}, we bound 
$$
\begin{aligned}
\|a_{k,p} \widehat{\cP}_{\gamma} (t,k,p)\|_{L^2_{k,p}}^2 
&= \|a_{k-p,p} \widehat{\cP}_{\gamma} (t,k-p,p)\|_{L^2_{k,p}}^2 
\\&\lesssim \epsilon_0^2\int |k|^2\langle k\rangle^{-2\sigma}\langle kt \rangle^{-2N} \langle t \rangle^{2\delta} \min\{ \langle k-p\rangle, \langle p\rangle\}^{-4n_1} \; dp \, d\ell
\\&\lesssim \epsilon_0^2 \langle t\rangle^{-2-d+2\delta},
\end{aligned}
$$
in which we have used the prefactor $|k|$ in the estimate \eqref{dervakp} of $a_{k-p,p}$, which leads to the extra decay of order $t^{-1}$ as stated. 
On the other hand, we bound 
$$\begin{aligned}
\Big|\int |\widehat{\cP}_{\gamma}(t,k, \ell-k) \widehat{\mu}(t,k-\ell, p)| \; d\ell \Big|^2
& \le \|\widehat{\cP}_{\gamma}(t,k, \ell-k)\|_{L^\infty_kL^1_\ell} \int |\widehat{\cP}_{\gamma}(t,k, \ell-k) \widehat{\mu}(t,k-\ell, p)|^2 \; d\ell, 
\end{aligned}
 $$
which yields 
$$\begin{aligned}
\Big\|\int |\widehat{\cP}_{\gamma}(t,k, \ell-k) \widehat{\mu}(t,k-\ell, p)| \; d\ell \Big\|^2_{L^2_{k,p}}
& \lesssim \|\widehat{\cP}_{\gamma}(t,k, \ell-k)\|_{L^\infty_kL^1_\ell}^2  \|\widehat{\mu}(t)\|^2_{L^2_{k,p}}. 
\end{aligned}
 $$
Using again \eqref{est:decay-Fourier}, we bound $\|\widehat{\cP}_{\gamma}(t,k, \ell-k)\|_{L^1_\ell}  \lesssim \epsilon_0\langle t\rangle^{-d+\delta}$. Finally, the last integral term in \eqref{dtmkp} is estimated similarly. As a result, we obtain 
\begin{equation}
		\frac{d}{dt} \| \Fmu(t) \|_{L^2_{k,p}} \lesssim \epsilon_0 \langle t \rangle^{-1-d/2+\delta} + \epsilon_0 \langle t \rangle^{-d+\delta} \| \Fmu(t) \|_{L^2_{k,p}},
\end{equation}
which in particular yields $\| \Fmu(t) \|_{L^2_{k,p}} \lesssim \epsilon_0$ by using the Gronwall lemma. Next, for any $t_1 \ge t_2 \ge 0$, we repeat the above analysis to obtain  
	\[
 \| \Fmu(t_1) - \Fmu(t_2) \|_{L^2_{k,p}} \lesssim \epsilon_0 \int_{t_2}^{t_1} (\langle s \rangle^{-1-d/2 +\delta} + \langle t \rangle^{-d+\delta}) \, ds \lesssim \epsilon_0 \langle t_2\rangle^{-d/2+\delta} .  
 	\]
Note that $\|\cdot \|_{ \mathrm{HS}} = \|\cdot \|_{L^{2}_{k,p}}$. That is, $\{\mu(t)\}_{t\ge 0}$ is a Cauchy sequence, and hence, has a unique limit $\gamma_\infty$, as $t\to \infty$, in  the Hilbert--Schmidt spaces with kernel $\gamma_\infty \in L^2_{x,y}$. In addition, recalling \eqref{defmu}, we obtain 
	\[
		\| \gamma(t)  -e^{itH_\infty} \gamma_{\infty}e^{-itH_\infty}  \|_{\mathrm{HS}} \lesssim \epsilon_0 \langle t \rangle^{-d/2 + \delta},
	\]
	which completes the proof of \Cref{thm:main}.

\appendix 

\section{Oscillatory integral}\label{sec:osc}
In this section, we recall the following classical lemma to derive dispersive estimates from a non-degerate oscillatory phase, see, e.g. \cite{Farah2012, HKNR5}. 

\begin{lemma}[Non-stationary phase]\label{lem:IBP}
Let $\Phi(p)$ be a $C^{N+1}$ function so that $|\nabla_p\Phi|\ge \lambda $ for some $\lambda>0$. Suppose that there is a universal constant $C_0$, independent of $\lambda$ and $p$, so that 
\begin{equation}\label{phase-cond}
|\partial_p^\alpha \Phi(p)| \le C_0 |\nabla_p\Phi(p)| , \qquad 2\le |\alpha|\le N+1, 
\end{equation}
uniformly in $p\in \RR^d$. Then, for any $F \in W^{N,1}$, there holds
	\[
		\left| \int e^{-i \Phi(p)} F(p) \, dp \right| \lesssim \langle \lambda \rangle^{-N} \| F \|_{W^{N,1}}.
	\]
\end{lemma}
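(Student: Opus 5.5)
We argue by repeated integration by parts with the vector field $L_p$ built from $\nabla_p\Phi$. Write $e^{-i\Phi}=L_p e^{-i\Phi}$ with
\[
L_p f = \frac{i\nabla_p\Phi\cdot\nabla_p f}{|\nabla_p\Phi|^2},
\]
which is legitimate since $|\nabla_p\Phi|\ge\lambda>0$. Integrating by parts $N$ times moves the operator onto $F$:
\[
\int e^{-i\Phi}F\,dp=\int e^{-i\Phi}(L_p^*)^N F\,dp,\qquad L_p^* g=-i\nabla_p\cdot\Big(\frac{\nabla_p\Phi}{|\nabla_p\Phi|^2}g\Big).
\]
Boundary terms vanish by approximating $F$ by $C_c^\infty$ functions in $W^{N,1}$. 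For $\lambda\le 1$ we use only the trivial bound $\|F\|_{L^1}$, so it suffices to treat $\lambda\ge1$ and show $|(L_p^*)^NF|\lesssim\lambda^{-N}\sum_{|\beta|\le N}|\partial^\beta F|$ pointwise.

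The key step is a structural claim, proved by induction on $j\le N$. We claim $(L_p^*)^j F$ is a finite sum of terms of the form
\[
\frac{\prod_{i=1}^{r}\partial^{\gamma_i}\Phi}{|\nabla_p\Phi|^{2j+\kappa}}\;\partial^\beta F\cdot \big(\text{products of components of }\nabla_p\Phi\big),
\]
and each such term is pointwise bounded by $C|\nabla_p\Phi|^{-j}|\partial^\beta F|$ with $|\beta|\le j$. The exponents are arranged so that the homogeneity count in $\nabla\Phi$ is $-j$.

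To verify the claim, note that each application of $L_p^*$ does one of two things. It may differentiate $F$, which gains a factor $\nabla\Phi/|\nabla\Phi|^2$ of size $|\nabla\Phi|^{-1}$. Otherwise it differentiates the coefficient, which produces either higher derivatives $\partial^{\alpha}\Phi$ with $2\le|\alpha|\le j+1\le N+1$, or derivatives of $|\nabla\Phi|^{-2k}$ that bring out $\partial^2\Phi\cdot\nabla\Phi/|\nabla\Phi|^{2k+2}$. In both cases hypothesis \eqref{phase-cond} replaces each $|\partial^\alpha\Phi|$ by $C_0|\nabla\Phi|$. Hence every differentiation of a coefficient costs no more than a bounded factor times the existing power, and each step still gains exactly $|\nabla\Phi|^{-1}$. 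The number of terms and the constants depend only on $N$, $d$ and $C_0$.

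Concluding, we get $|(L_p^*)^NF|\lesssim|\nabla\Phi|^{-N}\sum_{|\beta|\le N}|\partial^\beta F|\le\lambda^{-N}\sum_{|\beta|\le N}|\partial^\beta F|$. Integrating in $p$ gives $\lesssim\lambda^{-N}\|F\|_{W^{N,1}}\sim\langle\lambda\rangle^{-N}\|F\|_{W^{N,1}}$ for $\lambda\ge1$. The only delicate point is the bookkeeping in the induction, namely checking that no term loses a power of $|\nabla\Phi|$. This relies on the ratio condition \eqref{phase-cond} rather than on absolute bounds for higher derivatives, and that is exactly what allows $\lambda$-uniform constants, as used with $\lambda\sim|kt|$ elsewhere in the paper.
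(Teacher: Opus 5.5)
Your proposal is correct and follows the same route as the paper: you reduce to $\lambda\ge 1$, integrate by parts $N$ times with $L^*g=-i\nabla\cdot(\nabla\Phi\,|\nabla\Phi|^{-2}g)$, and use the ratio condition to get $(L^*)^N=\sum_{|\alpha|\le N}a_\alpha^{(N)}\partial^\alpha$ with $|a_\alpha^{(N)}|\lesssim|\nabla\Phi|^{-N}$. Your homogeneity bookkeeping spells out the step the paper states in one line: each application of $L^*$ contributes a factor of weight $-1$, and differentiating a coefficient is weight-neutral under the ratio condition. Your density argument for the boundary terms is a useful addition that the paper omits.
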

\begin{proof} We only need to consider the case when $|\lambda |\ge 1$, since otherwise it is direct. Observe that $Le^{-i\Phi(p)} = e^{-i\Phi(p)}$ where $L$ and its adjoint $L^{\ast}$ are given by
	\[
		Lf =- \frac{\nabla \Phi(p)}{i|\nabla\Phi(p)|^2} \cdot \nabla f, \qquad L^{\ast} g = - \frac{1}{i} \nabla \cdot \left( \frac{\nabla \Phi}{|\nabla \Phi|^2} g \right). 
	\]
	Integrating by parts for $N$ times, we get
	\begin{align*}
		\left| \int e^{-i\Phi(p)} F(p) \, dp  \right| 
		= \left| \int e^{-i\Phi(p)} (L^{\ast})^N F(p) \, dp \right|
		\lesssim \| (L^{\ast})^N F \|_{L^1}.
	\end{align*}
In view of \eqref{phase-cond}, we may write $(L^{\ast})^N = \sum_{|\alpha|\le N}a_\alpha^{(N)}\partial^\alpha_p $, where $|a_\alpha^{(N)}|\lesssim \langle \nabla_p \Phi\rangle^{-N}$. This yields  
	\[
		\| (L^{\ast})^N F \|_{L^1} \lesssim \langle \lambda \rangle^{-N} \sum_{|\alpha| \le N} \| \partial^{\alpha}_p F \|_{L^1},
	\]
	concluding the proof.
\end{proof}

\bibliographystyle{abbrv}

\end{document}